\newtheorem{thm}{\sc Theorem.}[section]
\newtheorem{rem}{\sc Remark.}[section]
\newenvironment{AMS}%
{{\upshape\bfseries AMS subject classifications. }\ignorespaces}{}
\newenvironment{keywords}{{\upshape\bfseries Key words. }\ignorespaces}{}
\newcommand{\R}{{\mathbb R}}
\newcommand{\spa}{\operatorname{span}}
\newcommand{\diag}{\operatorname{diag}}
\newcommand{\diam}{\operatorname{diam}}
\newcommand{\vol}{\mathcal{L}^d} 
\newcommand{\conv}{\operatorname{conv}}
\newcommand{\Mloss}{\mathcal{L}_{\rm loss}}
\newcommand{\dH}[1]{\;{\rm d}{\cal H}^{#1}} 
\newcommand{\dL}[1]{\;{\rm d}{\cal L}^{#1}} 
\newcommand{\bigchi}{\ensuremath{\mathrm{\mathcal{X}}}}
\newcommand{\charfcn}[1]{\bigchi_{#1}} 
\newcommand{\Domain}{\Omega}
\newcommand{\Vh}{\underline{V}(\Gamma^m)}
\newcommand{\Wh}{W(\Gamma^m)}
\newcommand{\Vht}{\underline{V}(\Gamma^h(t))}
\newcommand{\Wht}{W(\Gamma^h(t))}
\newcommand{\uspace}{\mathbb{U}}
\newcommand{\pspace}{\mathbb{P}}
\newcommand{\kspace}{\mathbb{K}}
\renewcommand{\xspace}{\mathbb{X}}
\newcommand{\sigmaO}{o}
\newcommand{\nabs}{\nabla_{\!s}}
\newcommand{\Id}{I\!d}
\newcommand{\id}{\rm id}
\newcommand{\ddt}{\frac{\rm d}{{\rm d}t}}
\newcommand{\NbulkT}{\vec{N}_{\Gamma,\Omega}^T}
\newcommand{\Nbulk}{\vec{N}_{\Gamma,\Omega}}
\newcommand{\unitn}{\vec{\rm n}}
\newcommand{\unitt}{\vec{\rm t}}
\newcommand{\ek}{e}
\newcommand{\strikec}{\mbox{$c\!\!\!\!\:/$}}
\newcommand{\strikes}{\mbox{$s\!\!\!\!\:/$}}
\newcommand{\XFEMGAMMA}{XFEM$_\Gamma$}
\def\epsilon{\varepsilon} 
\newcommand{\mat}[1]{\underline{\underline{#1}}\rule{0pt}{0pt}}
\def\vL{L\kern-0.08cm\char39}
\begin{document}
\title{
A Stable Parametric Finite Element Discretization \\ of Two-Phase 
Navier--Stokes Flow}
\author{John W. Barrett\footnotemark[2] \and 
        Harald Garcke\footnotemark[3]\ \and 
        Robert N\"urnberg\footnotemark[2]}

\renewcommand{\thefootnote}{\fnsymbol{footnote}}
\footnotetext[2]{Department of Mathematics, 
Imperial College London, London, SW7 2AZ, UK}
\footnotetext[3]{Fakult{\"a}t f{\"u}r Mathematik, Universit{\"a}t Regensburg, 
93040 Regensburg, Germany}

\date{}

\maketitle

\begin{abstract}
We present a parametric finite element approximation of two-phase flow. This
free boundary problem is given by the Navier--Stokes equations in the two
phases, which are coupled via jump conditions across the interface. Using
a novel variational formulation for the interface evolution gives rise to a
natural discretization of the mean curvature of the interface. 
The parametric finite element approximation of the evolving interface is then
coupled to a standard finite element approximation of the two-phase
Navier--Stokes equations in the bulk. Here enriching the pressure approximation 
space with the help of an XFEM function ensures good volume conservation
properties for the two phase regions. In addition, 
the mesh quality of the parametric approximation of the interface in general
does not deteriorate over time, and an equidistribution property can be shown
for a semidiscrete continuous-in-time variant of our scheme in two space
dimensions.
Moreover, our
finite element approximation can be shown to be unconditionally stable. 
We demonstrate the applicability of our method with some numerical
results in two and three space dimensions.
\end{abstract} 

\begin{keywords} 
finite elements, XFEM, two-phase flow, Navier--Stokes, free boundary problem, 
surface tension, interface tracking
\end{keywords}

\begin{AMS}76T99, 76M10, 35Q30, 65M12, 65M60, 76D05 \end{AMS}
\renewcommand{\thefootnote}{\arabic{footnote}}

\section{Introduction}

Numerical methods for two-phase incompressible flows have many
important applications, which range from bubble column reactors to
ink-jet printing to fuel injection in engines and to biomedical
engineering. 
In contrast to one-phase flows, several new
aspects arise in the numerical treatment of two-phase flows. {\it First} of
all a computational technique for the {\it numerical treatment of the
  unknown interface} has to be developed. One class of approaches is
based on interface capturing methods using an indicator function to
describe the interface. The volume of fluid (VOF) method and the level set method fall into this category.
In the former, the characteristic function of one of the phases is
approximated numerically, see e.g.\ \cite{HirtN81,RenardyR02,Popinet09};
whereas in the latter, the interface is given as the level set of a function, which has to be
determined, see e.g.\ \cite{SussmanSO94,Sethian,OsherF03,GrossR07}. 
In phase field methods the interface is assumed
to have a small, but positive, thickness and an additional parabolic
equation, defined in the whole domain, has to be solved in these
so-called diffuse interface models. We refer to
\cite{HohenbergH77,AndersonMW98,LowengrubT98,Feng06,KaySW08,AbelsGG12,%
GrunK12preprint} for details.
In this paper we use a direct description of the interface using a
parameterization of the unknown surface. 
In such an approach the interface is approximated by
a polyhedral surface, see \cite{DeckelnickDE05}, and equations on the
surface mesh have to be coupled to quantities defined on the bulk
mesh. We refer e.g.\ to \cite{UnverdiT92,Bansch01,Tryggvason_etal01,GanesanMT07} for
further details, and to \cite{LevequeL97,Peskin02} for the
related immersed boundary method. 

{\it Secondly} it is important to {\it numerically approximate
 capillarity effects} in an accurate and stable way. In two-phase
flows, or in free surface flows, capillarity effects, which are given by
quantities involving the curvature of the interface, often determine
the flow behaviour to a large extent. 
Typically an explicit treatment of surface tension forces (also called 
capillary forces) leads 
to severe restrictions on the time step, see e.g.\
\cite{Bansch01,Bansch01habil}, 
and so more advanced approaches use an implicit treatment.
This approach is discussed e.g.\ in
\cite{GrossR07} for the level set approach, and in 
\cite{Bansch01} for the parametric approach. 
We note that an inadequate approximation
of capillarity effects can trigger oscillations of
the velocity at the interface, which can 
lead to so-called spurious currents,
see e.g.\ \cite{HysingTKPBGT09,GrossR11,spurious}.
In this paper we propose an implicit treatment of the surface tension forces
that leads to an unconditionally stable approximation of two-phase
Navier--Stokes flow.

In each of the approaches mentioned above (parametric approach, level
set method, volume of fluid method, phase field method) surface
tension forces, and hence curvature quantities, 
have to be computed. A particular successful method, in
the context of the parametric approach and the level set method, is
to compute the mean curvature of the approximated interface with the
help of a discretization of the identity 
\begin{equation}\label{eq:LBop}
\Delta_s\,\vec x=\varkappa\,\vec\nu\,.
\end{equation}
Here $\vec x$ is a parameterization of the interface, $\Delta_s$ is
the Laplace--Beltrami operator, $\varkappa$ is the 
sum of the principal curvatures (often simply called the mean curvature) 
and $\vec\nu$ is a unit normal to the interface. 
This identity was used for the  
numerical approximation of curvature driven interface evolution
for the first time by Dziuk, \cite{Dziuk91}. 
Later this idea was used in e.g.\ \cite{Bansch01,GanesanMT07,GrossR11}, among
others, in the context of capillarity driven free surface and two-phase flows.
The approximation of curvature in the present paper
also relies on the identity (\ref{eq:LBop}).

A {\it third} important issue relevant for the simulation of two-phase
flows is to ensure {\it a good approximation of the
interface} and in particular {\it a good mesh quality} during the
evolution. In phase field methods refinement of the mesh
close to the interface and choosing the interface width sufficiently
small ensures good approximation properties of the interface. However, this
leads to high computational costs. In volume of fluid methods the
interface has to be reconstructed after an advection step of the
characteristic function. Although, second order reconstruction methods
exist, see e.g.\ \cite{ScardovelliZ03,PilliodP04}, it still
remains challenging to approximate geometric quantities, such as the 
mean curvature and the normal of the interface, 
accurately. 

In level set methods the level set function is advected with the fluid
velocity. This typically leads to distortions of the level set
function, which in turn leads to a poor approximation of the interface. 
Hence so-called re-initialization steps have to be performed
frequently after some time steps, see e.g.\ 
\cite{Sethian,OsherF03,GrossR11} for details. 

In the parametric approach the interface parameterization is
transported with the help of the fluid velocity, see
e.g.\ \cite{UnverdiT92,Bansch01,GanesanMT07}. Typically this
leads to degeneracies in the mesh, e.g.\ coalescence of mesh
points and very small angles in the polyhedral interface mesh. 
Often severe reparameterization steps have to be
employed, or the computation even has to be stopped. 
In our approach the interface is advected in the normal direction with the
normal part of the fluid velocity, but {\it the tangential degrees
of freedom are implicitly used to ensure a good mesh quality}. 
This treatment of the
interface is based on earlier work of the authors on the numerical
approximation of geometric evolution equations and on free boundary
problems related to crystal growth, see e.g.\ 
\cite{triplej,gflows3d,dendritic,crystal}. 

A {\it fourth} issue is the {\it approximation of the pressure}, which is discontinuous
across the interface due to capillarity effects. There are three approaches to handle this
in the parametric or level set approach for the interface, combined with a finite element approximation
of the fluid quantities.
One is to use a fitted bulk mesh that is adapted to the interface, see e.g.\ 
\cite{GanesanMT07}.
In the case of an unfitted bulk mesh, where the interface and bulk meshes are totally independent,
one can augment the pressure finite element space with additional degrees
of freedom in elements of the bulk mesh, which cut the interface.
This is an example of the extended finite element method (XFEM),
see e.g.\ \cite{GrossR07,AusasBI12}.
A simpler approach is just to adapt the bulk mesh in the vicinity of the interface,
which we adopt in this paper.
However, the XFEM or the fitted approaches could be used in 
our approximation, see Subsection \ref{sec:33} for a discussion of the latter. 
 
Finally, a {\it fifth} issue is the {\it volume (area in 2d) conservation of the two phases}.
We achieve this by a very simple XFEM approach, where the pressure space is enriched
by one extra degree of freedom. This leads to exact volume conservation for a semidiscrete
continuous-in-time version of our scheme. Moreover, the fully discrete scheme 
shows good volume conservation properties in practice.

To summarize, in this paper we 
extend our parametric approximation of two-phase Stokes flow in \cite{spurious} to
two-phase Navier--Stokes flow with different densities. 
We present a {\it linear scheme},
i.e.\ a linear system of equations has to be solved at each time level, 
for 
this problem, which leads to {\it an unconditional stability bound}. 
Although, there already exists such a
stability bound for a nonlinear scheme for free capillary flows, see
\cite{Bansch01}; to
our knowledge the stability proof in this paper is the first one for a
linear scheme, and the first one for two-phase flows.

Finally, let us mention that developing and analyzing numerical
methods for two-phase flows is a very active research field, and we
refer to \cite{SussmanO09,AlandV12,ChengF12,ChoCCY12,ZahediKK12,%
JemsionLSSMMW13,LiYLSJK13} for some examples of recent contributions.

The outline of the paper is as follows. In Section~\ref{sec:Two} we give
a precise mathematical formulation of the two-phase Navier--Stokes
problem. In Section~\ref{sec:2} we introduce a weak formulation for
the resulting problem that will form the basis of our novel finite element
approximation, which we consider in Section~\ref{sec:3}. There we show that our
approximation is unconditionally stable, and introduce our XFEM approach for
volume conservation.
In Section~\ref{sec:4} we discuss 
how the arising discrete system of linear equations at each time level can 
be solved in practice. Finally,
in Section~\ref{sec:5} we discuss our mesh adaptation algorithms, and
then present several numerical experiments in Section~\ref{sec:6}.

\section{The two-phase Navier--Stokes problem}\label{sec:Two}

In this paper we consider two-phase flows in a given domain
$\Omega\subset\mathbb{R}^d$, where $d=2$ or $d=3$. The domain $\Omega$
contains two different immiscible incompressible phases (liquid-liquid
or liquid-gas) which for all $t\in[0,T]$ occupy time dependent regions
$\Omega_+(t)$ and $\Omega_-(t):=\Omega\setminus\overline{\Omega}_+(t)$
and which are separated by an interface
$(\Gamma(t))_{t\in[0,\overline{T}]}$, $\Gamma(t)\subset\Omega$. 
See Figure~\ref{fig:sketch} for an illustration.
\begin{figure}
\begin{center}
\unitlength15mm
\psset{unit=\unitlength,linewidth=1pt}
\begin{picture}(4,3.5)(0,0)
\psline[linestyle=solid]{-}(0,0)(4,0)(4,3.5)(0,3.5)(0,0)
\psccurve[showpoints=false,linestyle=solid] 
 (1,1.3)(1.5,1.6)(2.7,1.0)(2.5,2.6)(1.6,2.5)(1.1,2.7)
\psline[linestyle=solid]{->}(2.92,1.6)(3.4,1.6)
\put(3.25,1.7){{\black $\vec\nu$}}
\put(2.9,1.0){{$\Gamma(t)$}}
\put(2,2.1){{$\Omega_-(t)$}}
\put(0.5,0.5){{$\Omega_+(t)$}}
\end{picture}
\end{center}
\caption{The domain $\Omega$ in the case $d=2$.}
\label{fig:sketch}
\end{figure}
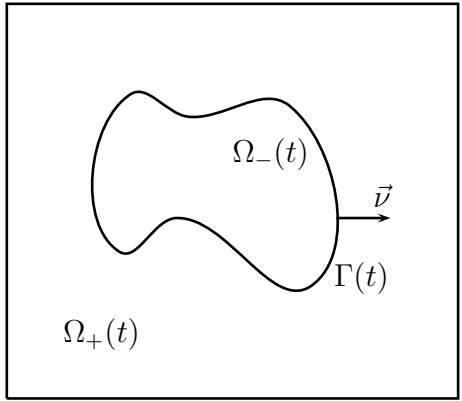%
For later use, we assume that $(\Gamma(t))_{t\in [0,T]}$ 
is a sufficiently smooth evolving hypersurface without boundary that is
parameterized by $\vec{x}(\cdot,t):\Upsilon\to\R^d$,
where $\Upsilon\subset \R^d$ is a given reference manifold, i.e.\
$\Gamma(t) = \vec{x}(\Upsilon,t)$. Then
$\mathcal{V} := \vec{x}_t \,.\,\vec{\nu}$ is
the normal velocity of the evolving hypersurface $\Gamma$,
where $\vec\nu$ is the unit normal on $\Gamma(t)$ pointing into $\Omega_+(t)$.

Let $\rho(t) = \rho_+\,\charfcn{\Omega_+(t)} + \rho_-\,\charfcn{\Omega_-(t)}$,
with $\rho_\pm \in \R_{\geq0}$, denote the fluid densities, where here and
throughout $\charfcn{\mathcal{A}}$ defines the characteristic function for a
set $\mathcal{A}$.
Denoting by $\vec u : \Omega \times [0, T] \to \R^d$ the fluid velocity,
by $\mat\sigma : \Omega \times [0,T] \to \R^{d \times d}$ the stress tensor,
and by $\vec f : \Omega \times [0, T] \to \R^d$ a possible forcing,
the incompressible Navier--Stokes equations in the two phases are given by
\begin{subequations}
\begin{alignat}{2}
\rho\,(\vec u_t + (\vec u \,.\,\nabla)\,\vec u)
- \nabla\,.\,\mat\sigma & = \vec f := \rho\,\vec f_1 + \vec f_2
\qquad &&\mbox{in } 
\Omega_\pm(t)\,, \label{eq:NSa} \\
\nabla\,.\,\vec u & = 0 \qquad &&\mbox{in } \Omega_\pm(t)\,, \label{eq:NSb} \\
\vec u & = \vec 0 \qquad &&\mbox{on } \partial_1\Omega\,, \label{eq:NSc} \\
\vec u \,.\,\unitn = 0\,,\quad
[\mat\sigma\,\unitn + \beta\,\vec u]\,.\,\unitt& = 0 
\quad\forall\ \unitt \in \{\unitn\}^\perp 
\qquad &&\mbox{on } \partial_2\Omega\,, 
\label{eq:NSd} 
\end{alignat}
\end{subequations}
where $\partial\Domain =\partial_1\Omega \cup
\partial_2\Omega$, with $\partial_1\Omega \cap \partial_2\Omega
=\emptyset$, denotes the boundary of $\Domain$ with outer unit normal $\unitn$
and $\{\unitn\}^\perp := \{ \unitt \in \R^d : \unitt \,.\,\unitn = 0\}$.
Hence (\ref{eq:NSc}) prescribes a no-slip condition on 
$\partial_1\Omega$, while (\ref{eq:NSd}) prescribes a general slip condition on 
$\partial_2\Omega$. Here we assume that $\beta \geq 0$ and note that 
$\beta = 0$ corresponds to the so-called free-slip conditions.
Moreover, the stress tensor in (\ref{eq:NSa}) is defined by
\begin{equation} \label{eq:sigma}
\mat\sigma = \mu \,(\nabla\,\vec u + (\nabla\,\vec u)^T) - p\,\mat\Id\,,
\end{equation}
where $\mat\Id \in \R^{d \times d}$ denotes the identity matrix,
$p : \Omega \times [0, T] \to \R$ is the pressure and
$\mu(t) = \mu_+\,\charfcn{\Omega_+(t)} + \mu_-\,\charfcn{\Omega_-(t)}$,
with $\mu_\pm \in \R_{>0}$, denotes the dynamic viscosities in the two 
phases.
On the free surface $\Gamma(t)$, the following conditions need to hold:
\begin{subequations}
\begin{alignat}{2}
[\vec u]_-^+ & = \vec 0 \qquad &&\mbox{on } \Gamma(t)\,, \label{eq:1a} \\ 
[\mat\sigma\,\vec \nu]_-^+ & = -\gamma\,\varkappa\,\vec\nu 
\qquad &&\mbox{on } \Gamma(t)\,, \label{eq:1b} \\ 
\mathcal{V} &= \vec u\,.\,\vec \nu \qquad &&\mbox{on } \Gamma(t)\,, 
\label{eq:1c} 
\end{alignat}
where $\gamma>0$ is the surface tension coefficient
and $\varkappa$ denotes the mean curvature of $\Gamma(t)$, i.e.\ the sum of
the principal curvatures of $\Gamma(t)$, where we have adopted the sign
convention that $\varkappa$ is negative where $\Omega_-(t)$ is locally convex.
Moreover, as usual, $[\vec u]_-^+ := \vec u_+ - \vec u_-$ and
$[\mat\sigma\,\vec\nu]_-^+ := \mat\sigma_+\,\vec\nu - \mat\sigma_-\,\vec\nu$
denote the jumps in velocity and normal stress across the interface
$\Gamma(t)$. Here and throughout, we employ the shorthand notation
$\vec g_\pm := \vec g\!\mid_{\Omega_\pm(t)}$ for a function 
$\vec g : \Omega \times [0,T] \to \R^d$; and similarly for scalar and
matrix-valued functions.
The system (\ref{eq:NSa}--d), (\ref{eq:sigma}), (\ref{eq:1a}--c) 
is closed with the initial conditions
\begin{equation} \label{eq:1d}
\Gamma(0) = \Gamma_0 \,, \qquad 
\rho(\cdot,0)\,\vec u(\cdot,0) = \rho(\cdot,0)\,\vec u_0 
\qquad \mbox{in } \Omega\,,
\end{equation}
\end{subequations}
where $\Gamma_0 \subset \Omega$ and
$\vec u_0 : \Omega \to \R^d$ are given initial data. 

\setcounter{equation}{0}
\section{Weak formulation} \label{sec:2}

In preparation for the introduction of the weak formulation considered in this
paper, we note that the system (\ref{eq:NSa}--d), (\ref{eq:sigma}), 
(\ref{eq:1a}--d) can equivalently be formulated as follows. Find
time and space dependent functions
$\vec u$, $p$ and the interface $(\Gamma(t))_{t\in[0,T]}$ such that
\begin{subequations}
\begin{alignat}{2}
\rho\,(\vec u_t + (\vec u \,.\,\nabla)\,\vec u)
- \mu\,\nabla\,.\,(\nabla\,\vec u + (\nabla\,\vec u)^T)
+ \nabla\,p & = \vec f \qquad &&\mbox{in } 
\Omega_\pm(t)\,, \label{eq:2a} \\
\nabla\,.\,\vec u & = 0 \qquad &&\mbox{in } \Omega_\pm(t)\,, \label{eq:2b} \\
\vec u & = \vec 0 \qquad &&\mbox{on } \partial_1\Omega\,, \label{eq:2c} \\
\vec u \,.\,\unitn = 0\,,\quad
[\mu\,(\nabla\,\vec u + (\nabla\,\vec u)^T)\,\unitn + \beta\,\vec u]\,.\,\unitt& = 0 
\quad\forall\ \unitt \in \{ \unitn \}^\perp
\qquad &&\mbox{on } \partial_2\Omega\,, \label{eq:2cc} \\
[\vec u]_-^+ & = \vec 0 \qquad &&\mbox{on } \Gamma(t)\,, \label{eq:2d} \\ 
[\mu \,(\nabla\,\vec u + (\nabla\,\vec u)^T)\,\vec\nu - p\,\vec \nu]_-^+ 
& = -\gamma\,\varkappa\,\vec\nu \qquad &&\mbox{on } \Gamma(t)\,, \label{eq:2e} \\ 
\mathcal{V} &= \vec u\,.\,\vec \nu \qquad &&\mbox{on } \Gamma(t)\,, 
\label{eq:2f}  \\
\Gamma(0) = \Gamma_0 \,, \quad 
\rho(\cdot,0)\,\vec u(\cdot,0) & = \rho(\cdot,0)\,\vec u_0 
\qquad && \mbox{in } \Omega\,.
\label{eq:2g} 
\end{alignat}
\end{subequations}
Moreover, we observe that for arbitrary functions $\vec v$, 
$\vec w$, $\vec \xi \in H^1(\Omega,\R^d)$ it holds that
\begin{align} \label{eq:tripleterm}
[(\vec v\,.\,\nabla)\,\vec w]\,.\,\vec \xi
&= (\vec v\,.\,\nabla)\,(\vec w\,.\,\vec \xi) - 
[(\vec v\,.\,\nabla)\,\vec \xi]\,.\,\vec w \nonumber \\ &
= \tfrac12 \left[\,
[(\vec v\,.\,\nabla)\,\vec w]\,.\,\vec \xi - [(\vec v\,.\,\nabla)\,\vec \xi]
\,.\,\vec w\,\right] + \tfrac12\,(\vec v\,.\,\nabla)\,(\vec w\,.\,\vec \xi)\,.
\end{align}
For what follows we introduce the function spaces
\begin{align}
\uspace & := \{ \vec\phi \in H^1(\Omega, \R^d) : \vec\phi = \vec0 \ \mbox{ on } 
\partial_1\Omega\,,\ \vec\phi \,.\,\unitn = 0 \ \mbox{a.e. on } \partial_2\Omega
 \} \,,\
\pspace := L^2(\Omega) \nonumber \\
\quad\mbox{and}\quad
\widehat\pspace & := \{\eta \in \pspace : \int_\Omega\eta \dL{d}=0 \}\,. 
\label{eq:UP}
\end{align}
Here and throughout $\mathcal{L}^d$ denotes the Lebesgue measure in $\R^d$,
while $\mathcal{H}^{d-1}$ denotes the $(d-1)$-dimensional Hausdorff measure.
Then we have for any $\vec v \in \uspace$ 
that 
\begin{align} \label{eq:ibp0}
( \rho,(\vec v \,.\,\nabla)\,\phi) & = (\rho, \nabla\,.\,(\phi\,\vec v))
- (\rho\,(\nabla\,.\,\vec v), \phi) \nonumber \\ & =
- \left\langle [\rho]_-^+\,\vec v\,.\,\vec \nu, 
   \phi \right\rangle_{\Gamma(t)} 
- (\rho\,(\nabla\,.\,\vec v), \phi)
\end{align}
which holds for all $\phi\in W^{1,\frac{3}{2}}(\Omega)$, and it is
precisely this regularity that will be needed in order to derive
(\ref{eq:advect}) below. 
Here and throughout $(\cdot,\cdot)$ denotes the $L^2$--inner product on
$\Omega$, while $\langle \cdot,\cdot\rangle_{\Gamma(t)}$ is the 
$L^2$--inner product on $\Gamma(t)$.
Hence, it follows from (\ref{eq:2b},c), (\ref{eq:tripleterm}) and (\ref{eq:ibp0})
that
\begin{align}
( \rho\,(\vec u \,.\,\nabla)\,\vec u, \vec \xi)
&= \tfrac12 \left[ (\rho\,(\vec u\,.\,\nabla)\,\vec u, \vec \xi) -
(\rho\,(\vec u\,.\,\nabla)\,\vec \xi,\vec u)
-\langle [\rho]_-^+\,\vec u\,.\,\vec \nu, 
  \vec u\,.\,\vec \xi \rangle_{\Gamma(t)} \right]
\nonumber \\ 
& \hspace{3.2in} \qquad \forall\ \vec \xi \in H^1(\Omega,{\mathbb R}^d)\,.
\label{eq:advect}
\end{align}
Next, on noting (\ref{eq:2f}), we have that
\begin{align}
\ddt(\rho \,\vec u, \vec \xi) & = 
\ddt\left(\rho_+\,\int_{\Omega_+(t)} \vec u \,.\,\vec \xi \dL{d}
+ \rho_-\,\int_{\Omega_-(t)} \vec u\,.\,\vec \xi \dL{d}  \right) \nonumber \\ & 
=  (\rho\,\vec u_t, \vec \xi)
-\left\langle [\rho]_-^+\,\mathcal{V}, \vec u \,.\,\vec \xi 
\right\rangle_{\Gamma(t)}
\nonumber \\
&= (\rho\,\vec u_t, \vec \xi)- \left\langle [\rho]_-^+\,\vec u\,.\,\vec \nu, 
\vec u \,.\,\vec \xi \right\rangle_{\Gamma(t)}
\qquad \forall \ 
\vec \xi \in H^1(\Omega, {\mathbb R}^d)
\,.
\label{eq:rhot1}
\end{align}
Therefore, it follows from (\ref{eq:rhot1}) that
\begin{equation*} 
(\rho\,\vec u_t, \vec \xi) = 
\tfrac{1}{2} \left[
\ddt (\rho\,\vec u,\vec \xi) + (\rho\,\vec u_t, \vec \xi)
+ \left\langle [\rho]_-^+\,\vec u\,.\,\vec \nu, 
\vec u \,.\,\vec \xi \right\rangle_{\Gamma(t)} 
\right]
\qquad \forall\ \vec \xi \in H^1(\Omega, {\mathbb R}^d)\,, 
\end{equation*}
which on combining with (\ref{eq:advect}) yields that
\begin{equation} \label{eq:rhot3}
(\rho\,[\vec u_t + (\vec u \,.\,\nabla)\,\vec u], \vec \xi)
= \tfrac12\left[ \ddt (\rho\,\vec u, \vec \xi) + (\rho\,\vec u_t, \vec \xi)
+ (\rho, [(\vec u\,.\,\nabla)\,\vec u]\,.\,\vec \xi
- [(\vec u\,.\,\nabla)\,\vec \xi]\,.\,\vec u) \right].
\end{equation}
Finally, it holds on noting (\ref{eq:2cc},f) 
that for all $\vec \xi \in \uspace$ 
\begin{align}
& \int_{\Omega_+(t)\cup\Omega_-(t)} (\nabla\,.\,\mat\sigma)\,.\, \vec \xi \dL{d} 
 = -(\mat\sigma, \nabla\,\vec \xi)
- \left\langle [\mat\sigma\,\vec\nu]_-^+, \vec \xi \right\rangle_{\Gamma(t)} 
+ \int_{\partial\Omega} (\mat\sigma\,\unitn)\,.\,\vec \xi \dH{d-1}
\nonumber \\ & \hspace{3cm}
= - 2\,(\mu\,\mat D(\vec u), \mat D(\vec \xi)) + (p, \nabla\,.\,\vec \xi)
+ \gamma\left\langle \varkappa\,\vec \nu, \vec \xi \right\rangle_{\Gamma(t)}
- \beta\left\langle \vec u, \vec \xi \right\rangle_{\partial_2\Omega, \unitt}\,,
\label{eq:sigmaibp}
\end{align}
where $\mat D(\vec u) := \tfrac12\,( \nabla\,\vec u + (\nabla\,\vec u)^T)$,
and where 
\begin{equation*} 
\left\langle \vec u, \vec \xi \right\rangle_{\partial_2\Omega, \unitt} :=
\sum_{i=1}^{d-1} \int_{\partial_2 \Omega} (\vec u \,.\, \unitt_i)
\,(\vec \xi \,.\,\unitt_i) \dH{d-1}\,,
\end{equation*}
with
$\{ \unitt_i \}_{i=1}^{d-1}$ denoting an orthonormal basis of $\{ \unitn
\}^\perp$.

In addition, we define
\begin{equation*}
\xspace := H^1(\Upsilon,\R^d) \quad\mbox{and}\quad
\kspace := L^2(\Upsilon,\R)\,, 
\end{equation*}
where we recall that $\Upsilon$ is a given reference manifold.
On combining (\ref{eq:rhot3}) and (\ref{eq:sigmaibp}),
a possible weak formulation of (\ref{eq:2a}--h),
which utilizes the novel weak representation of $\varkappa\,\vec\nu$
introduced in \cite{triplej} for $d=2$ and in \cite{gflows3d} for $d=3$, 
is then given as follows.
Find time dependent functions $\vec u$, $p$, $\vec{x}$ and $\varkappa$ such
that $\vec u(\cdot,t)\in \uspace$, $p(\cdot,t)\in \widehat\pspace$,
$\vec{x}(\cdot,t) \in \xspace$, $\varkappa(\cdot,t)\in \kspace$ and
\begin{subequations}
\begin{align}
\tfrac12\left[ \ddt (\rho\,\vec u, \vec \xi) + (\rho\,\vec u_t, \vec \xi)
+ (\rho, [(\vec u\,.\,\nabla)\,\vec u]\,.\,\vec \xi
- [(\vec u\,.\,\nabla)\,\vec \xi]\,.\,\vec u) \right] &
+ 2\,(\mu\,\mat D(\vec u), \mat D(\vec \xi)) 
\nonumber \\ 
- (p, \nabla\,.\,\vec \xi)
+ \beta\left\langle \vec u, \vec \xi \right\rangle_{\partial_2\Omega, \unitt}
- \gamma\left\langle \varkappa\,\vec \nu, \vec \xi \right\rangle_{\Gamma(t)}
& = (\vec f, \vec \xi) \quad \forall\ \vec \xi \in \uspace\,,
\label{eq:weaka} \\
 (\nabla\,.\,\vec u, \varphi) & = 0  \qquad \forall\ \varphi \in 
\widehat\pspace\,,
\label{eq:weakb} \\
  \left\langle \vec x_t - \vec u, \chi\,\vec\nu \right\rangle_{\Gamma(t)} & = 0
 \qquad\forall\ \chi \in \kspace\,,
\label{eq:weakc} \\
 \left\langle \varkappa\,\vec\nu, \vec\eta \right\rangle_{\Gamma(t)}
+ \left\langle \nabs\,\vec x, \nabs\,\vec \eta \right\rangle_{\Gamma(t)}
& = 0  \qquad\forall\ \vec\eta \in \xspace
\label{eq:weakd}
\end{align}
\end{subequations}
hold for almost all times $t\in (0,T]$, as well as the initial conditions
(\ref{eq:2g}). 
Here we have observed that if $p \in \pspace$ is part of a
solution to (\ref{eq:2a}--g), then so is $p + c$ for an arbitrary $c \in \R$.
We remark that (\ref{eq:weaka}--d) in the case of Stokes flow, i.e.\ 
$\rho_+ = \rho_- = 0$, 
with $\partial_1 \Omega = \partial\Omega$
collapses to the weak formulation introduced by the present 
authors in \cite{spurious}. Similarly to \cite{spurious},
we have adopted a slight abuse of notation in (\ref{eq:weaka},c,d), in the
sense that here, and throughout this paper, we identify functions defined
on the reference manifold $\Upsilon$ with functions defined on $\Gamma(t)$.
In addition, we observe that in the special case 
$\rho_+ = \rho_- > 0$, $\mu_+ = \mu_- > 0$ and $\gamma = 0$, 
{\rm (\ref{eq:weaka},b)} reduces to a weak formulation of the 
incompressible Navier--Stokes equations in $\Omega$.

We can establish the following formal a priori bound, where we first note that
on allowing time-dependent test functions $\vec \xi$ in (\ref{eq:rhot1}),
which yields the extra term $( \rho\,\vec u, \vec \xi_t)$ on the right hand 
side of (\ref{eq:rhot1}), 
we obtain the following amended version of (\ref{eq:weaka}) 
for time-dependent test functions $\vec\xi(\cdot,t) \in \uspace$:
\begin{alignat}{2}
\tfrac12\left[ \ddt (\rho\,\vec u, \vec \xi) + (\rho\,\vec u_t, \vec \xi)
- (\rho\,\vec u, \vec \xi_t)
+ (\rho, [(\vec u\,.\,\nabla)\,\vec u]\,.\,\vec \xi
- [(\vec u\,.\,\nabla)\,\vec \xi]\,.\,\vec u) \right]
&\nonumber \\ 
+ 2\,(\mu\,\mat D(\vec u), \mat D(\vec \xi)) - (p, \nabla\,.\,\vec \xi)
+ \beta\left\langle \vec u, \vec \xi \right\rangle_{\partial_2\Omega, \unitt}
- \gamma\left\langle \varkappa\,\vec \nu, \vec \xi \right\rangle_{\Gamma(t)}
& = (\vec f, \vec \xi)\,.
\label{eq:weakat} 
\end{alignat}
Now choosing $\vec\xi=\vec u$ in (\ref{eq:weakat}), 
$\varphi = p$ in (\ref{eq:weakb}),
$\chi=\gamma\,\varkappa$ in (\ref{eq:weakc}) and 
$\vec\eta=\gamma\,\vec{x}_t$ in (\ref{eq:weakd}) we obtain, 
on using the identity
\begin{equation*} 
\ddt\,\mathcal{H}^{d-1}(\Gamma(t)) 
= \left\langle \nabs\,\vec{x},\nabs\,\vec{x}_t 
\right\rangle_{\Gamma(t)} \,,
\end{equation*}
that
\begin{equation} \label{eq:testD}
 \ddt\left(\tfrac12\,\|\rho^\frac12\,\vec u\|^2_0 + 
\gamma\,\mathcal{H}^{d-1}(\Gamma(t)) \right) 
+ 2\,\|\mu^\frac12\,\mat D(\vec u)\|^2_0
+ \beta\left\langle \vec u, \vec u \right\rangle_{\partial_2\Omega, \unitt}
= (\vec f, \vec u) \,,
\end{equation}
where $\|\cdot\|_0 := [(\cdot,\cdot)]^\frac12$ 
denotes the $L^2$--norm on $\Omega$. Moreover, the volume
of $\Omega_-(t)$ is preserved in time. To see this, choose $\chi = 1$ in
(\ref{eq:weakc}) and 
$\varphi = (\charfcn{\Omega_-(t)} -
\frac{\mathcal{L}^d(\Omega_-(t))}{\mathcal{L}^d(\Omega)})$
in (\ref{eq:weakb}) to obtain
\begin{equation}
\frac{\rm d}{{\rm d}t} \vol(\Omega_-(t)) = 
\left\langle \vec{x}_t , \vec\nu \right\rangle_{\Gamma(t)}
= \left\langle \vec u, \vec\nu \right\rangle_{\Gamma(t)}
= \int_{\Omega_-(t)} \nabla\,.\,\vec u \dL{d} 
=0\,. \label{eq:conserved}
\end{equation}

\setcounter{equation}{0}
\section{Discretization} \label{sec:3}

Let $0= t_0 < t_1 < \ldots < t_{M-1} < t_M = T$ be a
partitioning of $[0,T]$ into possibly variable time steps 
$\tau_m := t_{m+1} -
t_{m}$, $m=0,\ldots, M-1$. We set $\tau := \max_{m=0,\ldots, M-1}\tau_m$.
First we introduce standard finite element spaces of piecewise polynomial
functions on $\Omega$.

Let $\Omega$ be a polyhedral domain. For $m\geq0$, let ${\cal T}^m$ 
be a regular partitioning of $\Omega$ into disjoint open simplices, so that 
$\overline{\Omega}=\cup_{\sigmaO^m\in{\cal T}^m}\overline{\sigmaO^m}$. 
Let $J_\Omega^m$ be the number of elements in $\mathcal{T}^m$, so that
$\mathcal{T}^m=\{ \sigmaO^m_j : j = 1 ,\ldots, J^m_\Omega\}$, and set
$h^m := \max_{j=1,\ldots, J^m_\Omega} \diam(\sigmaO^m_j)$.
Associated with ${\cal T}^m$ are the finite element spaces
\begin{equation*} 
 S^m_k := \{\chi \in C(\overline{\Omega}) : \chi\!\mid_{\sigmaO^m} \in
\mathcal{P}_k(\sigmaO^m) \quad \forall\ \sigmaO^m \in {\cal T}^m\} 
\subset H^1(\Omega)\,, \quad k \in \mathbb{N}\,,
\end{equation*}
where $\mathcal{P}_k(\sigmaO^m)$ denotes the space of polynomials of degree $k$
on $\sigmaO^m$. We also introduce the space of discontinuous, 
piecewise constant functions
\begin{equation*} 
 S^m_0 := \{\chi \in L^1(\Omega) : \chi\!\mid_{\sigmaO^m} \in
\mathcal{P}_0(\sigmaO^m) \quad \forall\ \sigmaO^m \in {\cal T}^m\} \,.
\end{equation*}
Let $\{\phi_{k,j}^m\}_{j=1}^{K_k^m}$ be the standard basis functions 
for $S^m_k$, $k\geq 0$.
We introduce $I^m_k:C(\overline{\Omega})\to S^m_k$, $k\geq 1$, 
the standard interpolation
operators, such that $(I^m_k \eta)(\vec{p}_{k,j}^m)= \eta(\vec{p}_{k,j}^m)$ 
for $j=1,\ldots, K_k^m$; where
$\{\vec p_{k,j}^m\}_{j=1}^{K_k^m}$ denote the coordinates of the degrees of
freedom of $S^m_k$, $k\geq 1$. In addition we define the standard projection
operator $I^m_0:L^1(\Omega)\to S^m_0$, such that
\begin{equation*}
(I^m_0 \eta)\!\mid_{o^m} = \frac1{\mathcal{L}^d(o^m)}\,\int_{o^m}
\eta \dL{d} \qquad \forall\ o^m \in \mathcal{T}^m\,.
\end{equation*}
Let $(\uspace^m,\pspace^m)$, with $\uspace^m\subset\uspace$,
be a pair of finite element spaces on
$\mathcal{T}^m$ that satisfy the LBB inf-sup condition. I.e.\ there exists a
constant $C_0 \in \R_{>0}$ independent of $h^m$ such that
\begin{equation} \label{eq:LBB}
\inf_{\varphi \in \widehat\pspace^m} \sup_{\vec \xi \in \uspace^m}
\frac{( \varphi, \nabla \,.\,\vec \xi)}
{\|\varphi\|_0\,\|\vec \xi\|_1} \geq C_0 > 0\,,
\end{equation}
where $\|\cdot\|_1 := \|\cdot\|_0 + \|\nabla\,\cdot\|_0$ denotes the 
$H^1$--norm on $\Omega$, and
where $\widehat\pspace^m:= \pspace^m \cap \widehat\pspace$, 
recall (\ref{eq:UP}); 
see e.g.\ \cite[p.~114]{GiraultR86}. 
For example, we may choose
\begin{subequations}
\begin{equation} \label{eq:P2P1}
(\uspace^m,\pspace^m) = ([S^m_2]^d\cap\uspace, S^m_1)
\end{equation}
for the lowest order Taylor--Hood element, also called the P2-P1 element, or
\begin{equation} \label{eq:P2P0}
(\uspace^m,\pspace^m) = ([S^m_2]^d\cap\uspace, S^m_0)
\end{equation}
for the P2-P0 element, or
\begin{equation} \label{eq:P2P10}
(\uspace^m,\pspace^m) = ([S^m_2]^d\cap\uspace, 
S^m_1 + S^m_0)
\end{equation}
\end{subequations}
for the P2-(P1+P0) element.
For the LBB stability of (\ref{eq:P2P1}) in the case
$\partial_1\Omega = \partial\Omega$ we refer to \cite[p.\ 252]{BrezziF91} for
$d=2$ and to \cite{Boffi97} for $d=3$, 
while the stability of (\ref{eq:P2P0}) is shown in 
\cite[p.\ 221]{BrezziF91}. The LBB stability of (\ref{eq:P2P10}) is shown in
\cite{BoffiCGG12} for $d=2$ and $d=3$.
Here the results for (\ref{eq:P2P1},c) need the weak
constraint that all the elements $o^m \in \mathcal{T}^m$ have a vertex in
$\Omega$. The LBB stability of (\ref{eq:P2P1}--c) for the
general case $\partial_2\Omega \not= \emptyset$ then follows trivially from
(\ref{eq:LBB}), since the space $\uspace$ is now less constrained.
Let $\{\{\phi_i^{\uspace^m}\,\vec \ek_j\}_{j=1}^d \}_{i=1}^{K_{\uspace}^m}$ and
$\{\phi_i^{\pspace^m}\}_{i=1}^{K_{\pspace}^m}$ denote the standard basis
functions of $\uspace^m$ and $\pspace^m$, respectively,
where $\{\vec \ek_j\}_{j=1}^d$ denotes the standard basis in $\R^d$.

The parametric finite element spaces in order to approximate $\vec{x}$ and
$\varkappa$ in (\ref{eq:weaka}--d), are defined as follows.
Similarly to \cite{gflows3d}, 
we introduce the following discrete spaces, based on the seminal
paper \cite{Dziuk91}.
Let $\Gamma^{m}\subset\R^d$ be a $(d-1)$-dimensional {\em polyhedral surface},
i.e.\ a union of nondegenerate $(d-1)$-simplices with no hanging vertices
(see \cite[p.~164]{DeckelnickDE05} for $d=3$), approximating the
closed surface $\Gamma(t_m)$, $m=0 ,\ldots, M$.
In particular, let $\Gamma^m=\bigcup_{j=1}^{J^m_\Gamma} 
\overline{\sigma^m_j}$,
where $\{\sigma^m_j\}_{j=1}^{J^m_\Gamma}$ is a family of mutually disjoint open 
$(d-1)$-simplices 
with vertices $\{\vec{q}^m_k\}_{k=1}^{K^m_\Gamma}$. 
Then for $m =0 ,\ldots, M-1$, let
\begin{equation*}
\Vh := \{\vec\chi \in C(\Gamma^m,\R^d):\vec\chi\!\mid_{\sigma^m_j}
\mbox{ is linear}\ \forall\ j=1,\ldots, J^m_\Gamma\} 
=: [\Wh]^d \subset H^1(\Gamma^m,\R^d)\,,
\end{equation*}
where $\Wh \subset H^1(\Gamma^m,\R)$ is the space of scalar continuous
piecewise linear functions on $\Gamma^m$, with 
$\{\chi^m_k\}_{k=1}^{K^m_\Gamma}$ denoting the standard basis of $\Wh$.
For later purposes, we also introduce 
$\pi^m: C(\Gamma^m,\R)\to \Wh$, the standard interpolation operator
at the nodes $\{\vec{q}_k^m\}_{k=1}^{K^m_\Gamma}$,
and similarly $\vec\pi^m: C(\Gamma^m,\R^d)\to \Vh$.
Throughout this paper, we will parameterize the new closed surface 
$\Gamma^{m+1}$ over $\Gamma^m$, with the help of a parameterization
$\vec{X}^{m+1} \in \Vh$, i.e.\ $\Gamma^{m+1} = \vec{X}^{m+1}(\Gamma^m)$.
Moreover, for $m \geq 0$, we will often identify $\vec{X}^m$ with 
$\vec{\rm id} \in \Vh$, the identity function on $\Gamma^m$. 

For scalar, vector and matrix valued functions 
we introduce the $L^2$--inner product 
$\langle\cdot,\cdot\rangle_{\Gamma^m}$ over
the current polyhedral surface $\Gamma^m$ 
as follows
\begin{equation*} 
\langle v, w\rangle_{\Gamma^m}  := \int_{\Gamma^m} v\,.\,w \; \dH{d-1}\,.
\end{equation*}
If $v,w$ are piecewise continuous, with possible jumps
across the edges of $\{\sigma_j^m\}_{j=1}^{J^m_\Gamma}$,
we introduce the mass lumped inner product
$\langle\cdot,\cdot\rangle_{\Gamma^m}^h$ as
\begin{equation*}
\langle v, w \rangle^h_{\Gamma^m}  :=
\tfrac1d \sum_{j=1}^{J^m_\Gamma} \mathcal{H}^{d-1}(\sigma^m_j)\,\sum_{k=1}^{d} 
(v\,.\,w)((\vec{q}^m_{j_k})^-),
\end{equation*}
where $\{\vec{q}^m_{j_k}\}_{k=1}^{d}$ are the vertices of $\sigma^m_j$,
and where
we define $v((\vec{q}^m_{j_k})^-):=
\underset{\sigma^m_j\ni \vec{p}\to \vec{q}^m_{j_k}}{\lim}\, v(\vec{p})$.

Given $\Gamma^m$, we 
let $\Omega^m_+$ denote the exterior of $\Gamma^m$ and let
$\Omega^m_-$ denote the interior of $\Gamma^m$, so that
$\Gamma^m = \partial \Omega^m_- = \overline\Omega^m_- \cap 
\overline\Omega^m_+$. 
We then partition the elements of the bulk mesh 
$\mathcal{T}^m$ into interior, exterior and interfacial elements as follows.
Let
\begin{align}
\mathcal{T}^m_- & := \{ o^m \in \mathcal{T}^m : o^m \subset
\Omega^m_- \} \,, \nonumber \\
\mathcal{T}^m_+ & := \{ o^m \in \mathcal{T}^m : o^m \subset
\Omega^m_+ \} \,, \nonumber \\
\mathcal{T}^m_{\Gamma^m} & := \{ o^m \in \mathcal{T}^m : o^m \cap
\Gamma^m \not = \emptyset \} \,. \label{eq:partT}
\end{align}
Clearly $\mathcal{T}^m = \mathcal{T}^m_- \cup \mathcal{T}^m_+ \cup
\mathcal{T}^m_{\Gamma^m}$ is a disjoint partition, which in practice
can easily be found e.g.\ with the Algorithm~4.1 in \cite{crystal}. Here we
assume that $\Gamma^m$ has no self intersections, and for the numerical
experiments in this paper this was always the case.
In addition, we define the piecewise constant unit normal 
$\vec{\nu}^m$ to $\Gamma^m$ by
\begin{equation*}
\vec{\nu}^m_j := \vec{\nu}^m\!\mid_{\sigma^m_j} :=
\frac{(\vec{q}^m_{j_2}-\vec{q}^m_{j_1}) \land \cdots \land
(\vec{q}^m_{j_d}-\vec{q}^m_{j_1})}{|(\vec{q}^m_{j_2}-\vec{q}^m_{j_1}) \land
\cdots \land (\vec{q}^m_{j_d}-\vec{q}^m_{j_1})|}\,,
\end{equation*}
where $\land$ is the standard wedge product on $\R^d$, and where
we have assumed that the vertices $\{\vec{q}^m_{j_k}\}_{k=1}^d$
of $\sigma_j^m$ are ordered such that $\vec\nu^m:\Gamma^m\to\R^d$ 
induces an orientation on $\Gamma^m$, and such that $\vec\nu^m$ points into
$\Omega^m_+$.

Before we can introduce our approximation to (\ref{eq:weaka}--d), we have to
introduce the notion of a vertex normal on $\Gamma^m$. We will combine this
definition with a natural assumption that is needed in order to show existence
and uniqueness, where applicable, for the introduced finite element 
approximation.
\begin{itemize}
\item[$({\cal A})$]
We assume for $m=0,\ldots, M-1$ that $\mathcal{H}^{d-1}(\sigma^m_j) > 0$ 
for all $j=1,\ldots, J^m_\Gamma$,
and that $\Gamma^m \subset \overline\Omega$.
For $k= 1 ,\ldots, K^m_\Gamma$, let
$\Xi_k^m:= \{\sigma^m_j : \vec{q}^m_k \in \overline{\sigma^m_j}\}$
and set
\begin{equation*}
\Lambda_k^m := \cup_{\sigma^m_j \in \Xi_k^m} \overline{\sigma^m_j}
 \qquad \mbox{and} \qquad
\vec\omega^m_k := \frac{1}{\mathcal{H}^{d-1}(\Lambda^m_k)}
\sum_{\sigma^m_j\in \Xi_k^m} \mathcal{H}^{d-1}(\sigma^m_j)
\;\vec{\nu}^m_j\,. 
\end{equation*}
Then we further assume that 
$\dim \spa\{\vec{\omega}^m_k\}_{k=1}^{K^m_\Gamma} = d$,
$m=0,\ldots, M-1$.
\end{itemize}

Given the above definitions, we also introduce the piecewise linear 
vertex normal function
\begin{equation*} 
\vec\omega^m := \sum_{k=1}^{K^m_\Gamma} \chi^m_k\,\vec\omega^m_k \in \Vh \,,
\end{equation*}
and note that 
\begin{equation} 
\langle \vec{v}, w\,\vec\nu^m\rangle_{\Gamma^m}^h =
\left\langle \vec{v}, w\,\vec\omega^m\right\rangle_{\Gamma^m}^h 
\qquad \forall\ \vec{v} \in \Vh,\ w \in \Wh \,.
\label{eq:NI}
\end{equation}

Following a similar approach used by the authors in \cite{crystal}
in the context of the parametric approximation of dendritic crystal growth,
we consider an unfitted finite element approximation of (\ref{eq:weaka}--d).
On recalling (\ref{eq:partT}), we introduce the discrete density 
$\rho^m \in S^m_0$ and the discrete viscosity $\mu^m \in S^m_0$,
for $m\geq 0$, as either
\begin{subequations}
\begin{equation} \label{eq:rhoma}
\rho^m\!\mid_{o^m} = \begin{cases}
\rho_- & o^m \in \mathcal{T}^m_-\,, \\
\rho_+ & o^m \in \mathcal{T}^m_+\,, \\
\tfrac12\,(\rho_- + \rho_+) & o^m \in \mathcal{T}^m_{\Gamma^m}\,,
\end{cases}
\quad\text{and}\quad
\mu^m\!\mid_{o^m} = \begin{cases}
\mu_- & o^m \in \mathcal{T}^m_-\,, \\
\mu_+ & o^m \in \mathcal{T}^m_+\,, \\
\tfrac12\,(\mu_- + \mu_+) & o^m \in \mathcal{T}^m_{\Gamma^m}\,;
\end{cases}
\end{equation}
or
\begin{align} \label{eq:rhomc}
\rho^m\!\mid_{o^m} & =
v_-(o^m)\, \rho_- + (1 - v_-(o^m))\,\rho_+
\quad\text{and}\quad
\mu^m\!\mid_{o^m} =
v_-(o^m)\, \mu_- + (1 - v_-(o^m))\,\mu_+\,, \nonumber \\
\quad& \text{where}\quad
v_-(o^m) = \frac{\mathcal{L}^d (o^m \cap \Omega^m_-)}{\mathcal{L}^d (o^m)}\,,
\qquad \forall\ o^m \in \mathcal{T}^m\,.
\end{align}
\end{subequations}
We also set $\rho^{-1} := \rho^0$. Clearly (\ref{eq:rhoma},b) only differ in
the definitions of $\rho^m$ and $\mu^m$ on the elements in 
$\mathcal{T}^m_{\Gamma^m}$.

Our finite element approximation for two-phase Navier--Stokes flow
is then given as follows.
Let $\Gamma^0$, an approximation to $\Gamma(0)$, 
and $\vec U^0\in \uspace^0$ be given.
For $m=0,\ldots, M-1$, find $\vec U^{m+1} \in \uspace^m$, 
$P^{m+1} \in \widehat\pspace^m$, $\vec{X}^{m+1}\in\Vh$ and 
$\kappa^{m+1} \in \Wh$ such that
\begin{subequations}
\begin{align}
&
\tfrac12 \left( \frac{\rho^m\,\vec U^{m+1} - (I^m_0\,\rho^{m-1})
\,I^m_2\,\vec U^m}{\tau_m}
+(I^m_0\,\rho^{m-1}) \,\frac{\vec U^{m+1}- I^m_2\,\vec{U}^m}{\tau_m}, \vec \xi 
\right)
 \nonumber \\ & \qquad
+ 2\left(\mu^m\,\mat D(\vec U^{m+1}), \mat D(\vec \xi) \right)
+ \tfrac12\left(\rho^m, 
 [(I^m_2\,\vec U^m\,.\,\nabla)\,\vec U^{m+1}]\,.\,\vec \xi
- [(I^m_2\,\vec U^m\,.\,\nabla)\,\vec \xi]\,.\,\vec U^{m+1} \right)
\nonumber \\ & \qquad
- \left(P^{m+1}, \nabla\,.\,\vec \xi\right)
+ \beta\left\langle \vec U^{m+1}, \vec \xi 
\right\rangle_{\partial_2\Omega, \unitt}
 - \gamma\,\left\langle \kappa^{m+1}\,\vec\nu^m,
   \vec\xi\right\rangle_{\Gamma^m}
\nonumber \\ & \qquad\qquad\qquad\qquad
= \left(\rho^m\,\vec f^{m+1}_1 + \vec f^{m+1}_2, \vec \xi\right)
\qquad \forall\ \vec\xi \in \uspace^m \,, \label{eq:HGa}\\
& \left(\nabla\,.\,\vec U^{m+1}, \varphi\right)  = 0 
\qquad \forall\ \varphi \in \widehat\pspace^m\,,
\label{eq:HGb} \\
&  \left\langle \frac{\vec X^{m+1} - \vec X^m}{\tau_m} ,
\chi\,\vec\nu^m \right\rangle_{\Gamma^m}^h
- \left\langle \vec U^{m+1}, 
\chi\,\vec\nu^m \right\rangle_{\Gamma^m}  = 0
 \qquad\forall\ \chi \in \Wh\,,
\label{eq:HGc} \\
& \left\langle \kappa^{m+1}\,\vec\nu^m, \vec\eta \right\rangle_{\Gamma^m}^h
+ \left\langle \nabs\,\vec X^{m+1}, \nabs\,\vec \eta \right\rangle_{\Gamma^m}
 = 0  \qquad\forall\ \vec\eta \in \Vh
\label{eq:HGd}
\end{align}
\end{subequations}
and set $\Gamma^{m+1} = \vec{X}^{m+1}(\Gamma^m)$.
Here we have defined $\vec f^{m+1}_i(\cdot) := I^m_2\,\vec f_i(\cdot,t_{m+1})$,
$i=1,2$.
We remark that (\ref{eq:HGa}--d), in the case that $\rho_+ = \rho_- = 0$
and $\partial_1\Omega = \partial\Omega$, 
collapses to the finite element approximation for two-phase Stokes flow
introduced in \cite{spurious}. Moreover,
on setting $\rho_+ = \rho_- > 0$, $\mu_+ = \mu_- > 0$ and $\gamma = 0$,
the scheme {\rm (\ref{eq:HGa},b)} 
reduces to a standard finite element approximation of the 
incompressible Navier--Stokes equations in $\Omega$; 
see e.g.\ \cite{Temam01}.

Let 
\begin{equation*} 
\mathcal{E}(\xi,\vec V,\mathcal{M}) := 
\tfrac12\,(\xi\,\vec V, \vec V) + \gamma\, \mathcal{H}^{d-1}(\mathcal{M})\,,
\end{equation*}
where $\xi \in L^1(\Omega)$, $\vec V \in \uspace$ and $\mathcal{M} \subset
\R^d$ is a $(d-1)$-dimensional manifold.

\begin{thm} \label{thm:stab}
Let the assumption ($\mathcal{A}$) hold. Then for $m=0 ,\ldots, M-1$
there exists a unique solution
$(\vec U^{m+1}, P^{m+1}, \vec{X}^{m+1}, \kappa^{m+1}) 
\in \uspace^m\times \widehat\pspace^m \times \Vh \times \Wh$ to 
{\rm (\ref{eq:HGa}--d)}. Moreover, the solution
satisfies
\begin{align}
& \mathcal{E}(\rho^m,\vec U^{m+1}, \Gamma^{m+1})
+ \tfrac12\left((I^m_0\rho^{m-1})\,(\vec U^{m+1} - I^m_2\,\vec U^m), 
\vec U^{m+1} - I^m_2\,\vec U^m \right) \nonumber \\ & \hspace{5cm}
+ 2\,\tau_m\left(\mu^m\,\mat D(U^{m+1}), \mat D(U^{m+1}) \right)
+ \beta\left\langle \vec U^{m+1}, \vec U^{m+1} 
\right\rangle_{\partial_2\Omega, \unitt}
\nonumber \\ & \hspace{3cm}
\leq \mathcal{E}(I^m_0\,\rho^{m-1},I^m_2\,\vec U^m,\Gamma^m) 
+ \tau_m\left( \rho^m\,\vec f^{m+1}_1 + \vec f^{m+1}_2, 
\vec U^{m+1} \right)\,.
\label{eq:stab}
\end{align}
\end{thm}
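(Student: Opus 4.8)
The plan is to exploit that, for each fixed $m$, (\ref{eq:HGa}--d) is a \emph{linear} system for $(\vec U^{m+1},P^{m+1},\vec X^{m+1},\kappa^{m+1})$ on the finite-dimensional space $\uspace^m\times\widehat\pspace^m\times\Vh\times\Wh$, with the test spaces matching the trial spaces; it is therefore a square system, so existence of a unique solution is equivalent to showing that the difference of any two solutions, which satisfies (\ref{eq:HGa}--d) with $\vec f^{m+1}_i=\vec 0$ and with $I^m_2\vec U^m$ and $\vec X^m$ replaced by $\vec 0$, is trivial. Both the uniqueness proof and the a~priori bound then rest on the same energy-type choice of test functions, namely $\vec\xi=\vec U^{m+1}$ (resp.\ $\vec\xi=\tau_m\,\vec U^{m+1}$) in (\ref{eq:HGa}), $\varphi=P^{m+1}$ in (\ref{eq:HGb}), $\chi=\gamma\,\tau_m\,\kappa^{m+1}$ in (\ref{eq:HGc}) and $\vec\eta=\gamma\,\vec X^{m+1}$ (resp.\ $\vec\eta=\gamma\,(\vec X^{m+1}-\vec X^m)$) in (\ref{eq:HGd}), and then adding the four identities.

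For \emph{uniqueness}, I would test the homogeneous system as above. Three cancellations are crucial: the pressure term disappears since $P^{m+1}\in\widehat\pspace^m$ makes (\ref{eq:HGb}) with $\varphi=P^{m+1}$ yield $(\nabla\,.\,\vec U^{m+1},P^{m+1})=0$; the skew-symmetric convective term in (\ref{eq:HGa}) vanishes identically when $\vec\xi=\vec U^{m+1}$; and the coupling term $-\gamma\langle\kappa^{m+1}\,\vec\nu^m,\vec U^{m+1}\rangle_{\Gamma^m}$ is rewritten, using (\ref{eq:HGc}) with $\chi=\gamma\,\tau_m\,\kappa^{m+1}$ and then (\ref{eq:HGd}) with $\vec\eta=\gamma\,\vec X^{m+1}$, as $\tfrac{\gamma}{\tau_m}\langle\nabs\vec X^{m+1},\nabs\vec X^{m+1}\rangle_{\Gamma^m}$. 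What remains is a sum of manifestly non-negative terms equal to zero: a weighted $(\rho^m+I^m_0\rho^{m-1})$-seminorm of $\vec U^{m+1}$, the term $2\,(\mu^m\,\mat D(\vec U^{m+1}),\mat D(\vec U^{m+1}))$, the boundary term $\beta\langle\vec U^{m+1},\vec U^{m+1}\rangle_{\partial_2\Omega,\unitt}$, and $\tfrac{\gamma}{\tau_m}\langle\nabs\vec X^{m+1},\nabs\vec X^{m+1}\rangle_{\Gamma^m}$. Since $\mu^m\ge\min\{\mu_-,\mu_+\}>0$, this forces $\mat D(\vec U^{m+1})=0$, whence Korn's inequality together with the boundary conditions built into $\uspace$ gives $\vec U^{m+1}=\vec 0$; and $\nabs\vec X^{m+1}=\vec 0$, so $\vec X^{m+1}$ is constant on each connected component of $\Gamma^m$. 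Substituting $\vec U^{m+1}=\vec 0$ into (\ref{eq:HGc}) and using (\ref{eq:NI}) and the mass-lumping structure gives $\vec X^{m+1}(\vec q^m_k)\,.\,\vec\omega^m_k=0$ for all $k$, so assumption $(\mathcal A)$ forces $\vec X^{m+1}=\vec 0$; then (\ref{eq:HGd}) gives $\kappa^{m+1}(\vec q^m_k)\,\vec\omega^m_k=\vec 0$ for all $k$, hence $\kappa^{m+1}=0$ again by $(\mathcal A)$; and finally (\ref{eq:HGa}) collapses to $(P^{m+1},\nabla\,.\,\vec\xi)=0$ for all $\vec\xi\in\uspace^m$, so the LBB condition (\ref{eq:LBB}) gives $P^{m+1}=0$.

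For the \emph{stability bound} I would run the same computation on the actual scheme with $\vec\xi=\tau_m\,\vec U^{m+1}$ and $\vec\eta=\gamma\,(\vec X^{m+1}-\vec X^m)$; the pressure and convective terms cancel exactly as before. The discrete time-derivative term, after applying the elementary identity $\vec a\,.\,\vec a-\vec a\,.\,\vec b=\tfrac12|\vec a|^2-\tfrac12|\vec b|^2+\tfrac12|\vec a-\vec b|^2$ with $\vec a=\vec U^{m+1}$, $\vec b=I^m_2\vec U^m$ (weighted by $I^m_0\rho^{m-1}$), produces \emph{exactly} $\tfrac12(\rho^m\,\vec U^{m+1},\vec U^{m+1})-\tfrac12((I^m_0\rho^{m-1})\,I^m_2\vec U^m,I^m_2\vec U^m)+\tfrac12((I^m_0\rho^{m-1})(\vec U^{m+1}-I^m_2\vec U^m),\vec U^{m+1}-I^m_2\vec U^m)$, i.e.\ the ``new minus old'' kinetic parts of $\mathcal E$ plus the extra non-negative term in (\ref{eq:stab}). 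The coupling term, via (\ref{eq:HGc}) and (\ref{eq:HGd}) tested as above, becomes $\gamma\,\langle\nabs\vec X^{m+1},\nabs(\vec X^{m+1}-\vec X^m)\rangle_{\Gamma^m}$, and the only inequality of the proof is the geometric lemma from \cite{triplej,gflows3d} stating that for piecewise linear $\vec X^{m+1}\in\Vh$ (with $\vec X^m$ the identity on $\Gamma^m$) one has $\langle\nabs\vec X^{m+1},\nabs(\vec X^{m+1}-\vec X^m)\rangle_{\Gamma^m}\ge\mathcal H^{d-1}(\Gamma^{m+1})-\mathcal H^{d-1}(\Gamma^m)$, which holds simplex by simplex because $\tfrac12|\nabs\vec X^{m+1}|^2$ dominates the local area-scaling factor. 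Collecting the terms and recalling the definition of $\mathcal E$ yields precisely (\ref{eq:stab}).

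The main obstacle is the bookkeeping in the two coupled cancellations: the pressure term (which needs the test space to be exactly $\widehat\pspace^m$, so that $P^{m+1}$ itself is admissible in (\ref{eq:HGb})) and, above all, the three-way cancellation of the surface-tension coupling between (\ref{eq:HGa}), (\ref{eq:HGc}) and (\ref{eq:HGd}), which only works with the correct scalings $\gamma\,\tau_m\,\kappa^{m+1}$ and $\gamma\,(\vec X^{m+1}-\vec X^m)$ of the test functions; together with the invocation of the geometric inequality $\langle\nabs\vec X^{m+1},\nabs(\vec X^{m+1}-\id)\rangle_{\Gamma^m}\ge\mathcal H^{d-1}(\Gamma^{m+1})-\mathcal H^{d-1}(\Gamma^m)$, which is what turns the Dirichlet-type surface term into a genuine bound on the change of surface area. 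For the existence part the only delicate point is passing from $\vec X^{m+1}(\vec q^m_k)\,.\,\vec\omega^m_k=0$ and $\kappa^{m+1}(\vec q^m_k)\,\vec\omega^m_k=\vec 0$ to $\vec X^{m+1}=\vec 0$ and $\kappa^{m+1}=0$, which is exactly why the non-degeneracy hypothesis $(\mathcal A)$ on the vertex normals is imposed.
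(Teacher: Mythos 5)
Your proposal is correct and follows essentially the same route as the paper's own proof: existence via uniqueness for the linear square system, the energy choice of test functions (yours differ only by harmless factors of $\tau_m$), the cancellation of the pressure and skew-symmetric convective terms, the use of (\ref{eq:NI}) and assumption $(\mathcal{A})$ to force $\vec X^{m+1}=\vec 0$ and $\kappa^{m+1}=0$, the LBB condition for the pressure, and the cited geometric inequality $\langle\nabs\vec X^{m+1},\nabs(\vec X^{m+1}-\vec X^m)\rangle_{\Gamma^m}\ge\mathcal H^{d-1}(\Gamma^{m+1})-\mathcal H^{d-1}(\Gamma^m)$ for the stability bound. No substantive gaps relative to the paper's argument.
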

\begin{proof}
As the system (\ref{eq:HGa}--d) is linear, existence follows from uniqueness.
In order to establish the latter, we consider the system:
Find $(\vec U, P, \vec{X}, \kappa) \in \uspace^m\times\widehat\pspace^m 
\times \Vh \times \Wh$ such that
\begin{subequations}
\begin{align}
&
\tfrac1{2\,\tau_m}\,\left( (\rho^m+I^m_0\,\rho^{m-1})\,\vec U, \vec \xi \right)
+ 2\left(\mu^m\,\mat D(\vec U), \mat D(\vec \xi) \right)
- \left(P, \nabla\,.\,\vec \xi\right)
\nonumber \\ & \qquad
+ \tfrac12\left(\rho^m, [(I^m_2\,\vec U^m\,.\,\nabla)\,\vec U]\,.\,\vec \xi
- [(I^m_2\,\vec U^m\,.\,\nabla)\,\vec \xi]\,.\,\vec U \right)
 \nonumber \\ & \qquad
+ \beta\left\langle \vec U, \vec \xi \right\rangle_{\partial_2\Omega, \unitt}
 - \gamma\,\left\langle \kappa\,\vec\nu^m, \vec\xi\right\rangle_{\Gamma^m}
= 0 
 \qquad \forall\ \vec\xi \in \uspace^m \,, \label{eq:proofa}\\
& \left(\nabla\,.\,\vec U, \varphi\right)  = 0 
\qquad \forall\ \varphi \in \widehat\pspace^m\,,
\label{eq:proofb} \\
&  \left\langle \frac{\vec X}{\tau_m} ,
\chi\,\vec\nu^m \right\rangle_{\Gamma^m}^h
- \left\langle \vec U, 
\chi\,\vec\nu^m \right\rangle_{\Gamma^m} = 0
 \qquad\forall\ \chi \in \Wh\,,
\label{eq:proofc} \\
& \left\langle \kappa\,\vec\nu^m, \vec\eta \right\rangle_{\Gamma^m}^h
+ \left\langle \nabs\,\vec X, \nabs\,\vec \eta \right\rangle_{\Gamma^m}
 = 0  \qquad\forall\ \vec\eta \in \Vh\,.
\label{eq:proofd}
\end{align}
\end{subequations}
Choosing $\vec\xi=\vec U$ in (\ref{eq:proofa}), 
$\varphi =  P$ in (\ref{eq:proofb}), 
$\chi = \gamma\,\kappa$ in (\ref{eq:proofc}) 
and $\vec\eta=\gamma\,\vec{X}$ in (\ref{eq:proofd})
yields 
that
\begin{align}
& \tfrac12\left((\rho^m + I^m_0\,\rho^{m-1})\,\vec U, \vec U \right) + 
2\,\tau_m\left(\mu^m\,\mat D(\vec U), \mat D(\vec U) \right)
+ \beta\left\langle \vec U, \vec U \right\rangle_{\partial_2\Omega, \unitt}
\nonumber \\ & \qquad\qquad
+ \gamma\,\left\langle \nabs\,\vec{X}, \nabs\,\vec{X} \right\rangle_{\Gamma^m} 
=0\,. \label{eq:proof2}
\end{align}
It immediately follows from (\ref{eq:proof2}) and a Korn's inequality
that $\vec U = \vec 0 \in \uspace^m$.
In addition, it holds that $\vec{X}\equiv \vec{X}_c \in \R^d$. 
Together with (\ref{eq:proofc}) for $\vec U=\vec 0$, 
(\ref{eq:NI}) and the assumption $(\mathcal{A})$ this
immediately yields that $\vec{X} \equiv \vec0$, while
(\ref{eq:proofd}) with $\vec\eta=\vec\pi^m[\kappa\,\vec\omega^m]$, 
recall (\ref{eq:NI}), implies that $\kappa \equiv 0$.
Finally, it now follows from (\ref{eq:proofa}) with $\vec U = \vec 0$ and
$\kappa = 0$, on recalling (\ref{eq:LBB}), that $P = 0 \in \widehat\pspace^m$.
Hence there exists a unique solution
$(\vec U^{m+1}, P^{m+1}, \vec{X}^{m+1}, \kappa^{m+1}) \in \uspace^m\times
\widehat\pspace^m \times \Vh \times \Wh$ to (\ref{eq:HGa}--d).

It remains to establish the bound (\ref{eq:stab}). 
Choosing $\vec\xi = \vec U^{m+1}$ in (\ref{eq:HGa}), 
$\varphi = P^{m+1}$ in (\ref{eq:HGb}), 
$\chi = \gamma\,\kappa^{m+1}$ in (\ref{eq:HGc}) and
$\vec\eta=\gamma\,({\vec{X}^{m+1}-\vec{X}^m})$ in (\ref{eq:HGd}) yields that
\begin{align*}
& \tfrac12\left(\rho^m\,\vec U^{m+1}, \vec U^{m+1}\right)
+ \tfrac12\left((I^m_0\,\rho^{m-1})\,(\vec U^{m+1} - I^m_2\,\vec U^m), 
\vec U^{m+1} - I^m_2\,\vec U^m \right) \nonumber \\ & \hspace{2cm}
+ 2\,\tau_m\left(\mu^m\,\mat D(U^{m+1}), \mat D(U^{m+1}) \right)
+ \beta\left\langle \vec U^{m+1}, \vec U^{m+1} 
\right\rangle_{\partial_2\Omega, \unitt} \nonumber \\ & \hspace{2cm} 
+ \gamma\,\left\langle \nabs\,\vec{X}^{m+1}, \nabs\,(\vec{X}^{m+1} - \vec{X}^m) 
\right\rangle_{\Gamma^m} \nonumber \\ & \hspace{1cm}
= \tfrac12\left((I^m_0\,\rho^{m-1})\,I^m_2\,\vec U^{m}, I^m_2\,\vec U^{m}\right)
+ \tau_m\left( \rho^m\,\vec f^{m+1}_1 + \vec f^{m+1}_2,
  \vec U^{m+1} \right)\,.
\end{align*}
and hence (\ref{eq:stab}) follows immediately, where
we have used the result that
\begin{equation*}
\left\langle \nabs\,\vec{X}^{m+1}, \nabs\,(\vec{X}^{m+1} - \vec{X}^m) 
\right\rangle_{\Gamma^m}
\geq \mathcal{H}^{d-1}(\Gamma^{m+1}) - \mathcal{H}^{d-1}(\Gamma^{m})
\end{equation*}
see e.g.\ \cite{triplej} and \cite{gflows3d} 
for the proofs for $d=2$ and $d=3$, respectively.
\end{proof}

The above theorem allows us to prove unconditional stability, in terms of the
chosen time step sizes, for our scheme under certain conditions.
\begin{thm} \label{thm:stabstab}
Let the assumption ($\mathcal{A}$) hold and let $\{t_i\}_{i=0}^M$ be an
arbitrary partitioning of $[0,T]$. In addition, assume that 
\begin{equation} \label{eq:thm42ass}
((I^m_0\rho^{m-1})\,I^m_2\,\vec U^{m}, I^m_2\,\vec U^{m}) \leq
(\rho^{m-1}\,\vec U^{m}, \vec U^{m})
\quad \text{for $m=1,\ldots, M-1$.}
\end{equation}
Then it holds that
\begin{align}
&\mathcal{E}(\rho^m,\vec U^{m+1}, \Gamma^{m+1}) 
+ \tfrac12\,\sum_{k=0}^m \left[\left(\rho^{k-1}\,
(\vec U^{k+1} - I^k_2\,\vec U^k), 
\vec U^{k+1} - I^k_2\,\vec U^k \right) \right. \nonumber \\ & \hspace{5cm}
\left.
+ 4\,\tau_k\left(\mu^k\,\mat D(\vec U^{k+1}), \mat D(\vec U^{k+1})
\right)
+ 2\,\beta\,\tau_k\left\langle \vec U^{k+1}, \vec U^{k+1} 
\right\rangle_{\partial_2\Omega, \unitt}
\right]
\nonumber \\ & \hspace{2cm}
\leq \mathcal{E}(\rho^{-1},\vec U^0, \Gamma^0)
+ \sum_{k=0}^m \tau_k\left(\rho^k\,\vec f^{k+1}_1 + \vec f^{k+1}_2, 
 \vec U^{k+1} \right)
\label{eq:stabstab}
\end{align}
for $m=0,\ldots, M-1$.
\end{thm}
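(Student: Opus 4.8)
The plan is to prove (\ref{eq:stabstab}) by summing the one-step estimate (\ref{eq:stab}) of Theorem~\ref{thm:stab} over the time levels $k=0,\ldots,m$ and telescoping; equivalently, one may argue by induction on $m$, the base case $m=0$ being exactly (\ref{eq:stab}) at level $0$. The only place where the hypothesis (\ref{eq:thm42ass}) is needed is in passing from the ``clean'' discrete energy produced at the end of step $k-1$ to the ``re-initialized'' energy that appears on the right-hand side of (\ref{eq:stab}) at step $k$.

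The key elementary observation is that the leading term on the left of (\ref{eq:stab}), written at level $k-1$, is $\mathcal{E}(\rho^{k-1},\vec U^{k},\Gamma^{k})$, while the energy term on the right of (\ref{eq:stab}), written at level $k$, is $\mathcal{E}(I^k_0\,\rho^{k-1},I^k_2\,\vec U^{k},\Gamma^{k})$. The surface part $\gamma\,\mathcal{H}^{d-1}(\Gamma^k)$ is literally identical in the two, because the polyhedral surface $\Gamma^k$ is reused and not re-approximated; so it suffices to compare the kinetic parts, and $\tfrac12\,((I^k_0\,\rho^{k-1})\,I^k_2\,\vec U^{k},I^k_2\,\vec U^{k}) \le \tfrac12\,(\rho^{k-1}\,\vec U^{k},\vec U^{k})$ is precisely (\ref{eq:thm42ass}) for $k\ge 1$. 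Hence $\mathcal{E}(I^k_0\,\rho^{k-1},I^k_2\,\vec U^{k},\Gamma^{k}) \le \mathcal{E}(\rho^{k-1},\vec U^{k},\Gamma^{k})$ for $k\ge 1$. For $k=0$ nothing needs to be assumed: since $\rho^{-1}:=\rho^0\in S^0_0$ and $\vec U^0\in\uspace^0=[S^0_2]^d\cap\uspace$, one has $I^0_0\,\rho^{-1}=\rho^{-1}$ and $I^0_2\,\vec U^0=\vec U^0$, so equality holds here --- which is why (\ref{eq:thm42ass}) is imposed only for $m\ge1$.

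Next I would apply (\ref{eq:stab}) at each level $k=0,\ldots,m$, bound the kinetic-energy input on its right-hand side by the previous display, and add the resulting $m+1$ inequalities. The non-negative viscous dissipation, boundary-friction and numerical ``mass-lumping'' viscosity terms all accumulate on the left, matching (after the overall factor $\tfrac12$) the coefficients $4\,\tau_k$ and $2\,\beta\,\tau_k$ displayed in (\ref{eq:stabstab}); the energies telescope, since the quantity $\mathcal{E}(\rho^{k-1},\vec U^{k},\Gamma^{k})$ produced on the right at level $k$ cancels the leading term of (\ref{eq:stab}) at level $k-1$. What remains is $\mathcal{E}(\rho^m,\vec U^{m+1},\Gamma^{m+1})$ on the left together with the accumulated dissipation, and on the right the initial energy $\mathcal{E}(\rho^{-1},\vec U^0,\Gamma^0)$ together with $\sum_{k=0}^m \tau_k\,(\rho^k\,\vec f^{k+1}_1+\vec f^{k+1}_2,\vec U^{k+1})$, which is (\ref{eq:stabstab}).

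I do not expect a real obstacle: the argument is a clean telescoping on top of Theorem~\ref{thm:stab}, and the bookkeeping with the operators $I^k_0$ and $I^k_2$ is the only thing to watch. The essential point is that (\ref{eq:thm42ass}) is a genuine hypothesis rather than an identity: the composition of the $L^2$-projection $I^k_0$ onto $S^k_0$ with the Lagrange interpolation $I^k_2$ need not be a contraction in the $\rho$-weighted $L^2$-norm once the bulk triangulation $\mathcal{T}^k$ differs from $\mathcal{T}^{k-1}$, so without (\ref{eq:thm42ass}) the telescoping would accumulate an uncontrolled positive remainder at every step. When the bulk mesh is held fixed in time the hypothesis is automatic (with equality), and then the numerical-viscosity term inherited from (\ref{eq:stab}), which formally carries the weight $I^k_0\,\rho^{k-1}$, coincides with the term written with weight $\rho^{k-1}$ in (\ref{eq:stabstab}).
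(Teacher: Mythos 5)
Your proposal is correct and follows essentially the same route as the paper: the bound (\ref{eq:stabstab}) is obtained by telescoping the one-step estimate (\ref{eq:stab}) over $k=0,\ldots,m$, with the hypothesis (\ref{eq:thm42ass}) used exactly where you place it, namely to pass from $\mathcal{E}(\rho^{k-1},\vec U^{k},\Gamma^{k})$ to $\mathcal{E}(I^k_0\,\rho^{k-1},I^k_2\,\vec U^{k},\Gamma^{k})$ for $k\geq 1$ (the paper states only this one-line observation and leaves the summation implicit). Your additional remarks --- that the surface term is unaffected, that the case $k=0$ needs no assumption, and that the weight in the numerical-dissipation term is formally $I^k_0\,\rho^{k-1}$ rather than $\rho^{k-1}$ --- are all accurate and, if anything, more careful than the paper's own proof.
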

\begin{proof}
The result immediately follows from (\ref{eq:stab}) on noting that 
our assumptions yield that
$\mathcal{E}(I^m_0\,\rho^{m-1}, I^m_2\,\vec U^{m}, \vec{X}^{m}) \leq
\mathcal{E}(\rho^{m-1}, \vec U^{m}, \vec{X}^{m})$
for $m=1,\ldots, M-1$.
\end{proof}

The assumption (\ref{eq:thm42ass}) for {\rm Theorem~\ref{thm:stabstab}} is 
trivially satisfied in the case $\rho_+=\rho_-=0$,
see also \cite{spurious}. Otherwise it is
for instance satisfied when either (i) $\mathcal{T}^m = \mathcal{T}^0$ for 
$m=1,\ldots, M-1$; i.e.\ when no mesh adaptation is employed, or when (ii) 
$\uspace^{m-1} \subset \uspace^m$ for $m=1,\ldots, M-1$;
e.g.\ when mesh refinement routines without coarsening are employed.
In principle, one can completely avoid the assumption (\ref{eq:thm42ass}) by
considering a variant of (\ref{eq:HGa}) in
which $I^m_0\,\rho^{m-1}$ is replaced by $\rho^{m-1}$ and $I^m_2\,\vec U^{m}$ 
is replaced by $\vec U^{m}$, i.e.\ no interpolation to the current finite
element spaces is used for the solutions from the previous time step.
For this approach Theorem~\ref{thm:stabstab} holds without 
assumption (\ref{eq:thm42ass}). However, as this strategy requires the
computation of integrals involving finite element functions from two different
spatial meshes, its implementation is far more involved than the 
implementation of \mbox{(\ref{eq:HGa}--d)}.
For all the computations presented in this paper we will always use the more
practical variant (\ref{eq:HGa}--d).

The stability bounds (\ref{eq:stab}) and (\ref{eq:stabstab}) control
the total surface area (length in two dimensions)
$\mathcal{H}^{d-1}(\Gamma^{m+1})$ and correspond to the continuous
energy bound (\ref{eq:testD}). For a larger surface energy density
$\gamma$ this control is stronger and fluid drops are less
unstable. However, if a stable numerical scheme does not conserve the total
volume (area in two dimensions) of the fluid phases, a large value of
$\gamma$ can lead to situations where drops decrease their size during
the evolution in order to reduce their surface area. Of course this is
an artefact and has no analogue in the continuous problem. 
Conversely, an unstable numerical scheme that does conserve the total
volume of the fluid phases may for large values of $\gamma$ suffer from
oscillations in the discrete representation of the interface. Hence for
numerical approximations of two-phase Navier--Stokes flow it is important to
have both: stability and conservation of the phase volumes.

The stability bounds
(\ref{eq:stab}) and (\ref{eq:stabstab}) are the main result of this paper. In
practice we observe that for large values of $\gamma$, the numerical solution
is {\it better} behaved than for small $\gamma$, in analogue to the continuous
situation. We note that this is in contrast to existing methods for two-phase
Navier--Stokes flow, for which no stability results can be shown; see e.g.\
\cite[p.\ 280]{GrossR11}.

\begin{rem} \label{rem:nonlinear}
We stress that our approximation {\rm (\ref{eq:HGa}--d)} 
results in a linear system of equations. This
is due to the lagging in the approximation $\rho^m$ of the densities.
Alternatively, one could choose to not lag $\rho^m$ and then obtain 
the following nonlinear approximation. 
For $m=0,\ldots, M-1$, find $\vec U^{m+1} \in \uspace^m$, 
$P^{m+1} \in \widehat\pspace^m$, $\vec{X}^{m+1}\in\Vh$ and 
$\kappa^{m+1} \in \Wh$ such that
\begin{align}
& \tfrac12 \left( \frac{\rho^{m+1}\,\vec U^{m+1} - (I^m_0\,\rho^{m})\,
I^m_2\,\vec U^m}{\tau_m}
+ (I^m_0\,\rho^m)\,\frac{\vec U^{m+1} - I^m_2\,\vec U^m}{\tau_m},\vec \xi
  \right)
+ 2\left(\mu^m\,\mat D(\vec U^{m+1}), \mat D(\vec \xi) \right)
\nonumber \\ & \qquad
- \left(P^{m+1}, \nabla\,.\,\vec \xi\right)
+ \tfrac12\left(\rho^{m+1}, 
[(I^m_2\,\vec U^m\,.\,\nabla)\,\vec U^{m+1}]\,.\,\vec \xi
- [(I^m_2\,\vec U^m\,.\,\nabla)\,\vec \xi]\,.\,\vec U^{m+1} \right)
 \nonumber \\ & \qquad
+ \beta\left\langle \vec U^{m+1}, \vec \xi 
\right\rangle_{\partial_2\Omega, \unitt}
 - \gamma\,\left\langle \kappa^{m+1}\,\vec\nu^m,
   \vec\xi\right\rangle_{\Gamma^m}
= \left(\rho^{m+1}\,\vec f^{m+1}_1 + \vec f^{m+1}_2, \vec \xi\right)
\qquad \forall\ \vec\xi \in \uspace^m  \label{eq:HGaa}
\end{align}
and {\rm (\ref{eq:HGb}--d)} hold. Now, as $\rho^{m+1}$, via the analogues of
{\rm (\ref{eq:rhoma},b)},
depends on $\Gamma^{m+1} = \vec X^{m+1}(\Gamma^m)$, the system
{\rm (\ref{eq:HGaa})}, {\rm (\ref{eq:HGb}--d)} is highly nonlinear. 
Assuming existence of a solution, 
it is then straightforward to establish the corresponding stability results,
i.e.\ {\rm (\ref{eq:stab})} and {\rm (\ref{eq:stabstab})} with $\rho^{\ell-1}$
replaced by $\rho^{\ell}$, for $\ell \geq 0$.
\end{rem}

\begin{rem} \label{rem:cons}
It is worthwhile to consider a continuous-in-time semidiscrete version of our 
scheme {\rm (\ref{eq:HGa}--d)}. Let $\mathcal{T}^h$ be an arbitrarily fixed
regular partitioning of $\Omega$ into disjoint open simplices
and define the finite element spaces $S^h_k$, $\uspace^h$ and
$\pspace^h$ similarly to $S^m_k$, $\uspace^m$ and $\pspace^m$, with the
corresponding interpolation operators $I^h_k$ and 
discrete approximations $\rho^h(t) \in S^h_0$ and $\mu^h(t) \in S^h_0$, which
will depend on $\Gamma^h(t)$ via the analogues of {\rm (\ref{eq:rhoma},b)}.
Then, given
$\Gamma^h(0)$ and $\vec U^h(0) \in \uspace^h$, for $t\in (0,T]$ find
$\vec U^h(t) \in \uspace^h$, $P^h(t) \in \widehat\pspace^h$,
$\vec{X}^h(t)\in \Vht$ and $\kappa^h(t) \in \Wht$ such that
\begin{subequations}
\begin{align}
&
\tfrac12 \left[ \ddt \left( \rho^h\,\vec U^h , \vec \xi \right)
+ \left( \rho^h\,\vec U^h_t , \vec \xi \right)
- (\rho^h\,\vec U^h, \vec \xi_t) \right]
+ 2\left(\mu^h\,\mat D(\vec U^h), \mat D(\vec \xi) \right)
\nonumber \\ & \qquad
- \left(P^h, \nabla\,.\,\vec \xi\right)
+ \tfrac12\left(\rho^h, [(\vec U^h\,.\,\nabla)\,\vec U^h]\,.\,\vec \xi
- [(\vec U^h\,.\,\nabla)\,\vec \xi]\,.\,\vec U^h \right)
 \nonumber \\ & \qquad
+ \beta\left\langle \vec U^h, \vec \xi \right\rangle_{\partial_2\Omega, \unitt}
 - \gamma\,\left\langle \kappa^h\,\vec\nu^h,
   \vec\xi\right\rangle_{\Gamma^h(t)}
= \left(\rho^h\,\vec f^h_1 + \vec f^h_2, \vec \xi\right)
\qquad \forall\ \vec\xi \in \uspace^h \,, \label{eq:sda}\\
& \left(\nabla\,.\,\vec U^h, \varphi\right)  = 0 
\qquad \forall\ \varphi \in \widehat\pspace^h\,,
\label{eq:sdb} \\
&  \left\langle \vec X^h_t ,
\chi\,\vec\nu^h \right\rangle_{\Gamma^h(t)}^h
- \left\langle \vec U^h, 
\chi\,\vec\nu^h \right\rangle_{\Gamma^h(t)} = 0
 \qquad\forall\ \chi \in \Wht\,,
\label{eq:sdc} \\
& \left\langle \kappa^h\,\vec\nu^h, \vec\eta \right\rangle_{\Gamma^h(t)}^h
+ \left\langle \nabs\,\vec X^h, \nabs\,\vec \eta \right\rangle_{\Gamma^h(t)}
 = 0  \qquad\forall\ \vec\eta \in \Vht\,,
\label{eq:sdd}
\end{align}
\end{subequations}
where $\vec f^h_i := I^h_2\,\vec f_i(t)$, $i=1,2$.
In {\rm (\ref{eq:sda}--d)} 
we always integrate over the current surface $\Gamma^h(t)$, with normal
$\vec\nu^h(t)$, described by the identity function 
$\vec{X}^h(t) \in \Vht$.
Moreover, $\langle \cdot,\cdot\rangle_{\Gamma^h(t)}^h$
is the same as $\langle \cdot,\cdot \rangle_{\Gamma^m}^h$ with 
$\Gamma^m$ and $\vec{X}^m$ replaced by $\Gamma^h(t)$ and $\vec{X}^h(t)$, 
respectively;
and similarly for $\langle \cdot,\cdot\rangle_{\Gamma^h(t)}$.

Using the results from \cite{gflows3d} it is straightforward to show that
\begin{equation*}
\ddt \mathcal{H}^{d-1}(\Gamma^h(t)) =  
\left\langle \nabs\,\vec{X}^h, \nabs\,\vec{X}^h_t \right\rangle_{\Gamma^h(t)}
\,.
\end{equation*}
It is then not difficult to derive the following
stability bound for the solution $(\vec U^h, P^h, \vec{X}^h, \kappa^h)$ of the
semidiscrete scheme {\rm (\ref{eq:sda}--d)}{\rm :}
\begin{align}
& \ddt\left(\tfrac12\,\|[\rho^h]^\frac12\,\vec U^h\|^2_{0} + 
\gamma\,\mathcal{H}^{d-1}(\Gamma^h(t)) \right) 
+ 2\,\|[\mu^h]^\frac12\,\mat D(\vec U^h)\|^2_{0}
+ \beta\left\langle \vec U^h, \vec U^h \right\rangle_{\partial_2\Omega, \unitt}
\nonumber \\ & \qquad \qquad
= \left(\rho^h\,\vec f^h_1 + \vec f^h_2, \vec U^h\right) \,.
\label{eq:stabsd}
\end{align}
Clearly, {\rm (\ref{eq:stabsd})} is the natural discrete analogue of
{\rm (\ref{eq:testD})}.
In addition, it is possible to prove that the vertices of $\Gamma^h(t)$ are
well distributed. As this follows already from the equations 
{\rm (\ref{eq:sdd})}, we
refer to our earlier work in \cite{triplej,gflows3d} for further details. In
particular, we observe that in the case $d=2$, i.e.\ for the planar two-phase
problem, an equidistribution property for the vertices of $\Gamma^h(t)$ can be
shown.
\end{rem}

\subsection{\XFEMGAMMA\ for conservation of the phase volumes} \label{sec:31}
In general, the fully discrete approximation (\ref{eq:HGa}--d) will not
conserve the volume of the two phase regions, i.e.\
the volume $\mathcal{L}^d(\Omega^m_-)$, 
enclosed by $\Gamma^m$ will in general not be preserved.
Clearly, given that volume conservation holds on the continuous level, recall
(\ref{eq:conserved}), it would be desirable to have an analogue property also
on the discrete level.

For the semidiscrete approximation {\rm (\ref{eq:sda}--d)} 
from Remark~\ref{rem:cons} we can show the following 
true volume conservation property in the case that 
\begin{equation} \label{eq:XFEM1}
\charfcn{\Omega_-^h(t)} \in \pspace^h\,,
\end{equation}
where here we need to allow the pressure space to be time-dependent.
Choosing $\chi=1$ in {\rm (\ref{eq:sdc})} and
$\varphi=(\charfcn{\Omega_-^h(t)} -
\frac{\mathcal{L}^d(\Omega_-^h(t))}{\mathcal{L}^d(\Omega)})
\in \widehat\pspace^h(t)$ in {\rm (\ref{eq:sdb})}, we
then obtain that
\begin{equation}
\frac{\rm d}{{\rm d}t} \vol(\Omega_-^h(t)) = 
\left\langle \vec{X}^h_t , \vec\nu^h \right\rangle_{\Gamma^h(t)}
= \left\langle \vec{X}^h_t , \vec\nu^h \right\rangle^h_{\Gamma^h(t)}
= \left\langle \vec U^h, \vec\nu^h \right\rangle_{\Gamma^h(t)}
= \int_{\Omega_-^h(t)} \nabla\,.\,\vec U^h \dL{d} 
=0\,; \label{eq:cons}
\end{equation}
which is the discrete analogue of {\rm (\ref{eq:conserved})}. 
Clearly, for the discrete pressure spaces $\pspace^h$ induced by 
{\rm (\ref{eq:P2P1}--c)} the condition {\rm (\ref{eq:XFEM1})} 
will in general not hold.
However, the assumption {\rm (\ref{eq:XFEM1})} 
can be satisfied with the help of the extended finite element method (XFEM), 
see e.g.\ \cite[\S7.9.2]{GrossR11}. 
Here the pressure spaces $\pspace^m$ need to
be suitably extended, so that they satisfy the time-discrete analogue of
(\ref{eq:XFEM1}), i.e.\ 
$\charfcn{\Omega_-^m} \in \pspace^m$, 
which means that then (\ref{eq:HGb}) implies
\begin{equation} \label{eq:HGbXFEM1}
\left\langle \vec U^{m+1}, \vec\nu^m \right\rangle_{\Gamma^m} = 0\,,
\end{equation}
which together with (\ref{eq:HGc})
then yields that
\begin{equation} \label{eq:consm}
\left\langle \vec X^{m+1} - \vec X^m, \vec \nu^m \right\rangle^h_{\Gamma^m} =
0\,.
\end{equation}
We recall that for area/volume preserving geometric evolution equations the
authors, in previous works, 
observed excellent area/volume preservation for fully discrete finite
element approximations satisfying (\ref{eq:consm}), see 
\cite{triplej,gflows3d}. 

Hence the obvious strategy to guarantee (\ref{eq:consm}) is to
add only a single new basis function to the basis of $\pspace^m$, 
namely $\charfcn{\Omega_-^m}$. Then the new contributions to (\ref{eq:HGa},b)
can be written in terms of integrals over $\Gamma^m$, since
\begin{equation*} 
\left( \nabla\,.\,\vec \xi, \charfcn{\Omega_-^m} \right) =
\int_{\Omega_-^m} \nabla\,.\,\vec \xi \dL{d} = 
\left\langle \vec \xi, \vec\nu^m \right\rangle_{\Gamma^m}
\qquad \forall\ \vec \xi \in \uspace^m\,.
\end{equation*}

In the remainder of this subsection we are going to consider this approach. Let
$(\uspace^m, \widehat\pspace^m)$ satisfy (\ref{eq:LBB}). For example,  
$\pspace^m$ may be given by one of (\ref{eq:P2P1}--c). Then we let 
$\phi^{\pspace^m}_{{K_\pspace^m}+1} := \charfcn{\Omega_-^m}$ and define
$\pspace^m_{\rm XFEM} := \spa \{ \phi^{\pspace^m}_i \}_{i=1}^{K_\pspace^m+1}$,
with $\widehat \pspace^m_{\rm XFEM} := \pspace^m_{\rm XFEM} \cap
\widehat \pspace$; recall (\ref{eq:UP}).

We are unable to prove that the element $(\uspace^m, 
\widehat\pspace^m_{\rm XFEM})$ satisfies an LBB condition, i.e.\
that
\begin{equation} \label{eq:LBBXFEM1}
\inf_{\varphi \in \widehat\pspace^m_{\rm XFEM}} \sup_{\vec \xi \in \uspace^m}
\frac{( \varphi, \nabla \,.\,\vec \xi)}
{\|\varphi\|_0\,\|\vec \xi\|_1} \geq C_1 > 0\,,
\end{equation}
where the constant $C_1 \in \R_{>0}$ is independent of $h^m$. The lack of a
proof for {\rm (\ref{eq:LBBXFEM1})} means that we cannot prove existence and 
uniqueness of the discrete pressure $P^{m+1} \in \widehat\pspace^m_{\rm XFEM}$
for the system {\rm (\ref{eq:HGa}--d)}, {\rm (\ref{eq:HGbXFEM1})}. It is for
this reason that we consider the following reduced system for our existence 
result for the extended pressure space $\pspace^m_{\rm XFEM}$ instead.

Let $\uspace^m_0 := 
\{ \vec U \in \uspace^m : (\nabla\,.\,\vec U, \varphi) = 0 \ \
\forall\ \varphi \in \pspace^m_{\rm XFEM} \}$. Then
any solution \linebreak $(\vec U^{m+1}, P^{m+1}, \vec{X}^{m+1}, \kappa^{m+1}) 
\in \uspace^m\times \widehat\pspace^m_{\rm XFEM} \times \Vh \times \Wh$
to {\rm (\ref{eq:HGa}--d)}, {\rm (\ref{eq:HGbXFEM1})}
is such that $(\vec U^{m+1}, \vec{X}^{m+1}, \kappa^{m+1})
\in \uspace^m_0\times \Vh \times \Wh$ satisfies
\begin{subequations}
\begin{align}
&
\tfrac12 \left( \frac{\rho^m\,\vec U^{m+1} - (I^m_0\,\rho^{m-1})
\,I^m_2\,\vec U^m}{\tau_m}
+(I^m_0\,\rho^{m-1}) \,\frac{\vec U^{m+1}- I^m_2\,\vec{U}^m}{\tau_m}, \vec \xi 
\right)
 \nonumber \\ & \quad
+ 2\left(\mu^m\,\mat D(\vec U^{m+1}), \mat D(\vec \xi) \right)
+ \tfrac12\left(\rho^m, 
 [(I^m_2\,\vec U^m\,.\,\nabla)\,\vec U^{m+1}]\,.\,\vec \xi
- [(I^m_2\,\vec U^m\,.\,\nabla)\,\vec \xi]\,.\,\vec U^{m+1} \right)
\nonumber \\ & \quad
+ \beta\left\langle \vec U^{m+1}, \vec \xi 
\right\rangle_{\partial_2\Omega, \unitt}
 - \gamma\,\left\langle \kappa^{m+1}\,\vec\nu^m,
   \vec\xi\right\rangle_{\Gamma^m}
= \left(\rho^m\,\vec f^{m+1}_1 + \vec f^{m+1}_2, \vec \xi\right)
\quad \forall\ \vec\xi \in \uspace^m_0 \,, \label{eq:reda}\\
&  \left\langle \frac{\vec X^{m+1} - \vec X^m}{\tau_m} ,
\chi\,\vec\nu^m \right\rangle_{\Gamma^m}^h
- \left\langle \vec U^{m+1}, 
\chi\,\vec\nu^m \right\rangle_{\Gamma^m}  = 0
 \qquad\forall\ \chi \in \Wh\,,
\label{eq:redb} \\
& \left\langle \kappa^{m+1}\,\vec\nu^m, \vec\eta \right\rangle_{\Gamma^m}^h
+ \left\langle \nabs\,\vec X^{m+1}, \nabs\,\vec \eta \right\rangle_{\Gamma^m}
 = 0  \qquad\forall\ \vec\eta \in \Vh \,.
\label{eq:redc}
\end{align}
\end{subequations}

\begin{thm} \label{thm:stabXFEM1}
Let the assumption 
($\mathcal{A}$) hold. Then there exists a unique solution \linebreak
$(\vec U^{m+1}, \vec{X}^{m+1}, \kappa^{m+1}) 
\in \uspace^m_0\times \Vh \times \Wh$ to {\rm (\ref{eq:reda}--c)}. 
Moreover, the solution satisfies the stability bound {\rm (\ref{eq:stab})}.
\end{thm}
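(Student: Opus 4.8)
The plan is to follow the proof of Theorem~\ref{thm:stab} almost verbatim, exploiting the fact that $\uspace^m_0$ is \emph{exactly} the space of test functions in (\ref{eq:reda}); hence the choice $\vec\xi=\vec U^{m+1}$ is admissible, and, unlike in the general saddle-point situation, there is simply no pressure term $-(P^{m+1},\nabla\,.\,\vec\xi)$ present that would need to be removed. Since (\ref{eq:reda}--c) is a square linear system on the finite-dimensional space $\uspace^m_0\times\Vh\times\Wh$, existence once more reduces to uniqueness.

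For uniqueness I would pass to the associated homogeneous problem: find $(\vec U,\vec X,\kappa)\in\uspace^m_0\times\Vh\times\Wh$ solving the homogeneous versions of (\ref{eq:reda}--c). Testing the homogeneous (\ref{eq:reda}) with $\vec\xi=\vec U\in\uspace^m_0$, the homogeneous (\ref{eq:redb}) with $\chi=\gamma\,\kappa$, and the homogeneous (\ref{eq:redc}) with $\vec\eta=\gamma\,\vec X$, and noting that the convective term vanishes identically because of its skew-symmetric structure, one arrives at exactly the identity (\ref{eq:proof2}) (the pressure contribution that was cancelled there via (\ref{eq:proofb}) being absent from the outset). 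A Korn inequality then gives $\vec U=\vec 0$, whence $\vec X\equiv\vec X_c$ for some constant $\vec X_c\in\R^d$; combining the homogeneous (\ref{eq:redb}) (with $\vec U=\vec 0$) with (\ref{eq:NI}) and assumption ($\mathcal{A}$) forces $\vec X\equiv\vec0$, and finally the homogeneous (\ref{eq:redc}) with $\vec\eta=\vec\pi^m[\kappa\,\vec\omega^m]$ yields $\kappa\equiv0$. This is precisely the argument following (\ref{eq:proof2}), with the single step that recovers the pressure via (\ref{eq:LBB}) simply omitted.

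For the stability bound I would choose $\vec\xi=\vec U^{m+1}\in\uspace^m_0$ in (\ref{eq:reda}), $\chi=\gamma\,\kappa^{m+1}$ in (\ref{eq:redb}) and $\vec\eta=\gamma\,(\vec X^{m+1}-\vec X^m)$ in (\ref{eq:redc}), and then repeat the computation in the second half of the proof of Theorem~\ref{thm:stab} line by line; the only change is that the term $-(P^{m+1},\nabla\,.\,\vec U^{m+1})$ is absent rather than being cancelled using (\ref{eq:HGb}), so one obtains precisely (\ref{eq:stab}), again invoking $\langle\nabs\,\vec X^{m+1},\nabs\,(\vec X^{m+1}-\vec X^m)\rangle_{\Gamma^m}\geq\mathcal{H}^{d-1}(\Gamma^{m+1})-\mathcal{H}^{d-1}(\Gamma^m)$.

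I do not anticipate a genuine technical obstacle here; the design of the reduced system (\ref{eq:reda}--c) is precisely what removes the one difficulty. The real subtlety is conceptual: since no LBB-type inequality (\ref{eq:LBBXFEM1}) is available for the pair $(\uspace^m,\widehat\pspace^m_{\rm XFEM})$, one cannot assert existence or uniqueness of the enriched pressure $P^{m+1}$, and so the statement is deliberately confined to the reduced unknowns $(\vec U^{m+1},\vec X^{m+1},\kappa^{m+1})$ living in the constrained velocity space $\uspace^m_0$. The only point that needs checking (and is asserted in the text immediately preceding the theorem) is the elementary consistency fact that every solution of the full system (\ref{eq:HGa}--d), (\ref{eq:HGbXFEM1}) restricts to a solution of (\ref{eq:reda}--c), so that the reduced problem is indeed the correct object of study; this, together with the uniqueness just established, also shows that the velocity, interface and curvature components of any solution of the full \XFEMGAMMA\ system are themselves uniquely determined.
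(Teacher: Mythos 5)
Your proposal is correct and follows essentially the same route as the paper: existence via uniqueness for the square linear system on $\uspace^m_0\times\Vh\times\Wh$, the identity (\ref{eq:proof2}) obtained with the same test functions (with the pressure term absent from the outset rather than cancelled), and the stability bound by repeating the second half of the proof of Theorem~\ref{thm:stab}. No gaps.
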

\begin{proof}
As $\uspace^m_0$ is a subspace of $\uspace^m$, existence to the linear system
(\ref{eq:reda}--c) follows from uniqueness, which is easy to show.
In fact, similarly to the proof of
Theorem~\ref{thm:stab} we obtain (\ref{eq:proof2}) and hence the desired
uniqueness result. The stability result follows analogously.
\end{proof}

\subsection{Alternative curvature treatment} \label{sec:36}
There is an alternative way to 
approximate the curvature vector $\varkappa\,\vec\nu$ 
in (\ref{eq:LBop}). In contrast to
the strategy employed in (\ref{eq:HGa}--d), where $\varkappa$ and $\vec\nu$ are
discretized separately, it is also possible to discretize 
$\vec\varkappa:= \varkappa\,\vec\nu$ directly, as proposed in the seminal paper
\cite{Dziuk91}. We then obtain the following finite element approximation.
For $m=0\to M-1$, find $\vec U^{m+1} \in \uspace^m$, 
$P^{m+1} \in \widehat\pspace^m$, $\vec{X}^{m+1}\in\Vh$ and 
$\vec\kappa^{m+1} \in \Vh$ such that
\begin{subequations}
\begin{align}
&
\tfrac12 \left( \frac{\rho^m\,\vec U^{m+1} - (I^m_0\,\rho^{m-1})
\,I^m_2\,\vec U^m}{\tau_m}
+(I^m_0\,\rho^{m-1}) \,\frac{\vec U^{m+1}- I^m_2\,\vec{U}^m}{\tau_m}, \vec \xi 
\right)
 \nonumber \\ & \qquad
+ 2\left(\mu^m\,\mat D(\vec U^{m+1}), \mat D(\vec \xi) \right)
+ \tfrac12\left(\rho^m, 
 [(I^m_2\,\vec U^m\,.\,\nabla)\,\vec U^{m+1}]\,.\,\vec \xi
- [(I^m_2\,\vec U^m\,.\,\nabla)\,\vec \xi]\,.\,\vec U^{m+1} \right)
\nonumber \\ & \qquad
- \left(P^{m+1}, \nabla\,.\,\vec \xi\right)
+ \beta\left\langle \vec U^{m+1}, \vec \xi 
\right\rangle_{\partial_2\Omega, \unitt}
 - \gamma\,\left\langle \vec\kappa^{m+1},
   \vec\xi\right\rangle_{\Gamma^m}
\nonumber \\ & \qquad\qquad\qquad\qquad
= \left(\rho^m\,\vec f^{m+1}_1 + \vec f^{m+1}_2, \vec \xi\right)
\qquad \forall\ \vec\xi \in \uspace^m \,, \label{eq:GDa}\\
& \left(\nabla\,.\,\vec U^{m+1}, \varphi\right) = 0 
\qquad \forall\ \varphi \in \widehat\pspace^m\,,
\label{eq:GDb} \\
& \left\langle \frac{\vec X^{m+1} - \vec X^m}{\tau_m} ,
\vec\chi \right\rangle_{\Gamma^m}^h
- \left\langle \vec U^{m+1}, \vec\chi \right\rangle_{\Gamma^m}  = 0
 \qquad\forall\ \vec\chi \in \Vh\,,
\label{eq:GDc} \\
& \left\langle \vec\kappa^{m+1} , \vec\eta \right\rangle_{\Gamma^m}^h
+ \left\langle \nabs\,\vec X^{m+1}, \nabs\,\vec \eta \right\rangle_{\Gamma^m}
 = 0  \qquad\forall\ \vec\eta \in \Vh
\label{eq:GDd}
\end{align}
\end{subequations}
and set $\Gamma^{m+1} = \vec{X}^{m+1}(\Gamma^m)$. A 
highly nonlinear discretization based on (\ref{eq:GDa}--d)
has first been proposed by B{\"a}nsch in \cite{Bansch01} 
for one-phase flow with a free capillary surface in the very
special situation that
\begin{equation*} 
\vec\xi\!\mid_{\Gamma^m} \in \Vh \quad \forall\ \vec\xi\in \uspace^m\,,
\end{equation*}
which in general cannot be satisfied for the unfitted approach.
It is not difficult to extend the results from Theorem~\ref{thm:stab}
to the linear scheme (\ref{eq:GDa}--d). 
However, the crucial difference between (\ref{eq:GDa}--d) and 
(\ref{eq:HGa}--d) is that in (\ref{eq:GDc})
the tangential velocity of the discrete interface is fixed by $\vec U^{m+1}$, 
and this has two consequences. Firstly, there is no
guarantee that the mesh quality of $\Gamma^m$ will be preserved. In fact, as
mentioned in the Introduction, typically the mesh will deteriorate over time.
And secondly, even for the case that $\charfcn{\Omega_-^m} \in \pspace^m$, 
it is not possible to prove (\ref{eq:consm}) for \mbox{(\ref{eq:GDa}--d)}, 
as $\vec\chi = \vec\nu^m$ is not a valid test function in (\ref{eq:GDc}),
and so true 
volume conservation in the semidiscrete setting, recall (\ref{eq:cons}), 
cannot be shown.
It is for these reasons that we prefer to use (\ref{eq:HGa}--d). 

\subsection{The fitted mesh approach} \label{sec:33}

Although in deriving the finite element approximation (\ref{eq:HGa}--d) we have
assumed an {\em unfitted} bulk mesh triangulation $\mathcal{T}^m$ that is
independent of $\Gamma^m$, the approximation (\ref{eq:HGa}--d) can also be
employed for a {\em fitted} bulk mesh.
In particular, it is possible to use (\ref{eq:HGa}--d) for a moving
fitted mesh approach, where for $m = 0 ,\ldots, M-1$ it holds that
\begin{equation} \label{eq:fit}
\Gamma^m \subset \bigcup_{o^m \in \mathcal{T}^m} \partial o^m\,.
\end{equation}
Here the solution $\vec X^{m+1}$ to (\ref{eq:HGa}--d) defines the 
position of $\Gamma^{m+1}$, as usual, but now a new bulk mesh 
$\mathcal{T}^{m+1}$ needs to be obtained by fitting it to $\Gamma^{m+1}$.

The main advantages of the fitted mesh approach (\ref{eq:fit}) over the 
unfitted approach are that 
(a) with the elements (\ref{eq:P2P0},c) 
the discontinuity in the pressure at $\Gamma^m$ can be
resolved and that (b) the inner products 
$\langle \cdot, \cdot \rangle_{\Gamma^m}$ in (\ref{eq:HGa},c) 
now only involve integration over edges/faces of bulk
elements $o^m \in \mathcal{T}^m$, which is standard.
A further consequence of (a) is that for the elements (\ref{eq:P2P0},c) 
it automatically holds that
$\charfcn{\Omega^m_-}$ is an admissible test function in (\ref{eq:HGb}), which
yields (\ref{eq:consm}), and so approximate volume conservation on the fully
discrete level. 
An additional advantages of the fitted mesh approach is that, since
$\mathcal{T}^m_{\Gamma^m} = \emptyset$, it holds that (\ref{eq:rhoma},b)
reduce to 
$\rho^m = \rho_-\,\charfcn{\Omega^m_-} + \rho_+\,(1-\charfcn{\Omega^m_-})$
and
$\mu^m = \mu_-\,\charfcn{\Omega^m_-} + \mu_+\,(1-\charfcn{\Omega^m_-})$.

We stress that the fitted mesh approach (\ref{eq:fit}) for 
(\ref{eq:HGa}--d) would also satisfy the stability result (\ref{eq:stab}).
However, due to the nature of the moving bulk mesh, the assumptions of
Theorem~\ref{thm:stabstab}, for $\rho_\pm \not=0$, in general do not hold, and
so the stability result (\ref{eq:stabstab}) need not hold over
several time steps. Another disadvantage of the fitted mesh approach is that at
every time step, due to the fact that the underlying bulk mesh changes
globally, the obtained velocity solution $\vec U^{m+1} \in \uspace^m$ needs to 
be appropriately interpolated on the new mesh $\mathcal{T}^{m+1}$. 
These interpolation errors may significantly
impact on the accuracy of the approximation.

A variant of the method described above, which avoids the repeated
interpolation onto a new finite element bulk mesh, is the so-called 
Arbitrary Lagrangian Eulerian (ALE) approach, see \cite{HughesLZ81}. 
Here a prescribed flow drives the
movement of the bulk mesh vertices, and this prescribed flow 
needs to be accounted for in the approximation of the momentum equation.
This means that at present it does not
appear possible to prove a stability result similar to
Theorem~\ref{thm:stabstab} for the ALE approach.
On the other hand, the fact that the movement of the bulk mesh is incorporated
in the finite element approximation means that
an interpolation of finite element
data after every time step is not needed. The prescribed bulk mesh flow
is usually chosen in a way to obtain a good quality bulk mesh.
However, in practice the ALE moving mesh approach may fail if
the approximated phases change their shape dramatically, as was reported in
e.g.\ \cite{HysingTKPBGT09} for the two-dimensional test case 2 there. In
higher space dimensions such pathological mesh defects are more frequent, which
poses a significant computational challenge.
For further details on the ALE approach for two-phase Navier--Stokes flow
we refer to \cite{Ganesan06,GerbeauLL06}.

In this paper we will focus on the general unfitted mesh approach, similarly to
our previous work for Stefan problems
in \cite{dendritic,crystal}. An investigation of the possible
benefits of fitted mesh approaches for (\ref{eq:HGa}--d), and in particular
of ALE methods, is left for future research.

\setcounter{equation}{0}
\section{Solution of the discrete system}  \label{sec:4}
As is standard practice for the solution of linear systems arising from
discretizations of Stokes and Navier--Stokes equations, we avoid the
complications of the constrained pressure space $\widehat\pspace^m$ in practice
by considering an overdetermined linear system with $\pspace^m$ instead. 
Introducing the obvious abuse of notation, the linear system (\ref{eq:HGa}--d),
with $\widehat\pspace^m$ replaced by $\pspace^m$,
can be formulated as: Find $(\vec U^{m+1},P^{m+1},
\kappa^{m+1},\delta\vec{X}^{m+1})$, where $\vec X^{m+1} = \vec X^m + 
\delta\vec X^{m+1}$, such that
\begin{subequations}
\begin{equation}
\begin{pmatrix}
 \vec B_\Omega & \vec C_\Omega & -\gamma\,\Nbulk & 0 \\
 \vec C^T_\Omega & 0 & 0 & 0 \\
 \NbulkT & 0 & 0 & -\frac1{\tau_m}\,\vec{N}_\Gamma^T \\
0 & 0 & \vec{N}_\Gamma & \vec{A}_\Gamma 
\end{pmatrix} 
\begin{pmatrix} \vec U^{m+1} \\ P^{m+1} \\ \kappa^{m+1} \\ 
\delta\vec{X}^{m+1} \end{pmatrix}
=
\begin{pmatrix} \vec g \\ 0 \\
0 \\ -\vec{A}_\Gamma\,\vec{X}^{m} \end{pmatrix} \,,
\label{eq:lin}
\end{equation}
where $(\vec U^{m+1},P^{m+1},\kappa^{m+1},\delta\vec{X}^{m+1})\in
(\R^d)^{K^m_\uspace}\times \R^{K^m_\pspace} \times
\R^{K^m_\Gamma}\,\times (\R^d)^{K^m_\Gamma}$
here denote the coefficients of these finite element functions with respect to
the standard bases of $\uspace^m$, $\pspace^m$, $\Wh$ and $\Vh$, respectively.
The definitions of the matrices and vectors in (\ref{eq:lin}) directly follow 
from (\ref{eq:HGa}--d),
but we state them here for completeness. 
Let $i,\,j = 1 ,\ldots, K_\uspace^m$,
$n,q = 1 ,\ldots, K_\pspace^m$ and $k,l = 1 ,\ldots, K^m_\Gamma$. Then
\begin{align}
& [\vec B_\Omega]_{ij} := 
\left( \tfrac{\rho^m+I^m_0\,\rho^{m-1}}{2\,\tau_m}\, 
\phi_j^{\uspace^m} , \phi_i^{\uspace^m} 
\right)\,\mat\Id
+ 2\left(\left(\mu^m\,\mat D(\phi_j^{\uspace^m} \vec \ek_r), 
\mat D( \phi_i^{\uspace^m}\,\vec \ek_s) \right) \right)_{r,s=1}^d 
\nonumber \\ & \qquad\qquad
+ \tfrac12\left(\rho^m, [(I^m_2\,\vec U^m\,.\,\nabla)\,\phi_j^{\uspace^m}
]\,\phi_i^{\uspace^m} - 
[(I^m_2\,\vec U^m\,.\,\nabla)\,\phi_i^{\uspace^m}]
\,\phi_j^{\uspace^m} \right)\,\mat\Id \,,
\nonumber \\
& [\vec C_\Omega]_{iq} := - \left( 
\left(\nabla\,.\,(\phi_i^{\uspace^m}\,\vec \ek_r),
\phi_q^{\pspace^m} \right) \right)_{r=1}^d,\qquad
[\Nbulk]_{il} :=  
\left\langle \phi_i^{\uspace^m}, \chi^m_l \,\vec\nu^m \right\rangle_{\Gamma^m}
\,, \nonumber \\
&  [\vec{N}_\Gamma]_{kl} := \left\langle 
\chi^m_l, \chi^m_k\,\vec\nu^m \right\rangle_{\Gamma^m}^h \,, \qquad
[\vec{A}_\Gamma]_{kl} := 
\left\langle \nabs\,\chi^m_l, \nabs\,\chi^m_k \right\rangle_{\Gamma^m}
\,\mat\Id \,, \nonumber \\
& \vec g_i = \left( \tfrac{I^m_0\,\rho^{m-1}}{\tau_m}\,I^m_2\,\vec U^m + 
\rho^m\,\vec f^{m+1}_1 + \vec f^{m+1}_2,
\phi_i^{\uspace^m}\right)\,;
\label{eq:mats}
\end{align}
\end{subequations}
where we recall that 
$\{\vec \ek_r\}_{r=1}^d$ denotes the standard basis in $\R^d$
and where we have used the convention that the subscripts in the matrix
notations refer to the test and trial domains, respectively. 
A single subscript is used where the two domains are the same.

For the the solution of (\ref{eq:lin})
we employ a Schur complement approach that eliminates
$(\kappa^{m+1}, \delta \vec X^{m+1})$ from (\ref{eq:lin}), and
then use an iterative solver for the remaining system in $(\vec U^{m+1},
P^{m+1})$. This approach has the advantage that for the reduced system
well-known solution methods for finite element discretizations for the 
standard Navier--Stokes equations may be employed. E.g.\ the authors in 
\cite{ElmanSW05}, for the reduced system matrix
{\scriptsize$\begin{pmatrix}
\vec B_\Omega & \vec C_\Omega \\
\vec C^T_\Omega & 0 \end{pmatrix}$}
recommend a GMRES iterative solver with the preconditioner
\begin{equation} \label{eq:ESW}
\mathcal{P} = \begin{pmatrix}
\vec{\mathcal{P}}_{\vec B} & \vec C_\Omega \\
0 & -\mathcal{P}_S
\end{pmatrix}
\end{equation}
where
$\vec{\mathcal{P}}_{\vec B}$ is some preconditioner for the matrix 
$\vec B_\Omega$, and $\mathcal{P}_S$ acts as a preconditioner for the Schur
complement operator $S=\vec C^T_\Omega \,\vec B_\Omega^{-1}\,\vec C_\Omega$.
An application of the preconditioner (\ref{eq:ESW}) amounts to
solving the equations
\begin{equation*} 
\begin{pmatrix}
\vec{\mathcal{P}}_{\vec B} & \vec C_\Omega \\
0 & -\mathcal{P}_S
\end{pmatrix}
\begin{pmatrix} \vec U \\ P \end{pmatrix}
= \begin{pmatrix} \vec v \\ q \end{pmatrix}
\iff
\mathcal{P}_S\,P = -q\,,\quad \vec{\mathcal{P}}_{\vec B}\,\vec U = 
\vec v - \vec C_\Omega\,P\,.
\end{equation*}
As the initial guess for the iterative solver the authors in \cite{ElmanSW05} 
recommend $(\vec B_\Omega^{-1}\,(\vec g - \vec C_\Omega\,P^{(0)}), P^{(0)})$,
or an approximation thereof, where $P^{(0)}$ is the initial guess for the
solution of the pressure approximation. 
It remains to discuss the choices of $\vec{\mathcal{P}}_{\vec B}$ and 
$\mathcal{P}_S$. The optimal choice for $\vec{\mathcal{P}}_{\vec B}$ is
$\vec{\mathcal{P}}_{\vec B} = \vec B_\Omega$. 
When that is not practical this can be
replaced with a suitable multigrid or Krylov solver approximation.
In the case 
$\rho_+ = \rho_- = 0$, a good choice for $\mathcal{P}_S$ 
is $\mathcal{P}_S = M_\mu$, where
\begin{equation*} 
[M_\mu]_{nq} = 
\left((\mu^m)^{-1}\,\phi_q^{\pspace^m}, \phi_n^{\pspace^m}\right)
\qquad n,q = 1 ,\ldots, K_\pspace^m\,;
\end{equation*}
see e.g.\ \cite{OlshanskiiR06}.
For the general Navier--Stokes equation, the following $BFBt$ preconditioner
works better. Here
\begin{subequations}
\begin{equation} \label{eq:BFBt}
\mathcal{P}_S^{-1} = (\vec C_\Omega^T\,\vec M_{u,1}^{-1}\,\vec C_\Omega)^{-1}
\vec C_\Omega^T\,\vec M_{u,1}^{-1}\,\vec B_\Omega\,\vec M_{u,1}^{-1}\,
\vec C_\Omega\,(\vec C_\Omega^T\,\vec M_{u,1}^{-1}\,\vec C_\Omega)^{-1}\,,
\end{equation}
where $\vec M_{u,1} = \diag(\vec M_{u})$ is the diagonal part of the mass 
matrix for the velocity space $\uspace^m$, and, for later purposes similarly 
$M_{p,1} = \diag(M_{p})$, i.e.\
\begin{equation*} 
[\vec M_u]_{ij} = 
\left(\phi_j^{\uspace^m}, \phi_i^{\uspace^m}\right) \mat\Id
\quad i,j = 1 ,\ldots, K_\uspace^m\,,\qquad
[M_p]_{nq} = 
\left(\phi_q^{\pspace^m}, \phi_n^{\pspace^m}\right)
\quad n,q = 1 ,\ldots, K_\pspace^m\,.
\end{equation*}
See e.g.\ \cite{ElmanSW05,GrossR11} for more details. 
Here we note that the rank deficiency of $\vec C_\Omega$ means that
e.g.\ $\vec C_\Omega^T\,\vec M_{u,1}^{-1}\,\vec C_\Omega$ is singular, with
kernel $\{ \underline{1} \}$, $\underline{1} = (1,\ldots,1)^T \in
\R^{K^m_\pspace}$. But restricted to the subspace $\{ \underline{1} \}^\perp$,
$\vec C_\Omega^T\,\vec M_{u,1}^{-1}\,\vec C_\Omega$ is a positive definite
matrix with range $\{ \underline{1} \}^\perp$. Computing
$(\vec C_\Omega^T\,\vec M_{u,1}^{-1}\,\vec C_\Omega)^{-1}$ with a
preconditioned conjugate gradient (pCG) solver can then be rigorously 
justified, e.g.\ for a right-oriented
preconditioning of GMRES with (\ref{eq:BFBt}). We refer to
\cite[\S 8.3.4]{ElmanSW05} for more details. However, in practice it turns out
that the inner pCG iteration for the computations of 
$(\vec C_\Omega^T\,\vec M_{u,1}^{-1}\,\vec C_\Omega)^{-1}$ in (\ref{eq:BFBt}) 
is much more stable if the projections $P_1 := \Id -
\frac{\underline{1}\,\underline{1}^T}{\underline{1}^T\,\underline{1}}$ are
included explicitly. In particular, in the case that $\vec C_\Omega$ does not
have full rank, we implement (\ref{eq:BFBt}) as
\begin{equation} \label{eq:BFBt_proj1}
\mathcal{P}_S^{-1} = 
P_1\,(P_1\,\vec C_\Omega^T\,\vec M_{u,1}^{-1}\,\vec C_\Omega\,P_1)^{-1}\,P_1\,
\vec C_\Omega^T\,\vec M_{u,1}^{-1}\,\vec B_\Omega\,\vec M_{u,1}^{-1}\,
\vec C_\Omega\,
P_1\,(P_1\,\vec C_\Omega^T\,\vec M_{u,1}^{-1}\,\vec C_\Omega\,P_1)^{-1}\,P_1\,.
\end{equation}
\end{subequations}

The desired Schur complement approach for eliminating 
$(\kappa^{m+1},\delta \vec X^{m+1})$ from (\ref{eq:lin}) can be obtained as
follows. Let 
\begin{equation*} 
\Xi_\Gamma:= \begin{pmatrix}
 0 & - \frac1{\tau_m}\,\vec{N}_\Gamma^T \\
\vec{N}_\Gamma & \vec{A}_\Gamma 
\end{pmatrix} \,.
\end{equation*}
It is a simple matter to adapt the argument in the proof of
Theorem~\ref{thm:stab} in order to show that the matrix
$\Xi_\Gamma$ is nonsingular. 
Then (\ref{eq:lin}) can be reduced to
\begin{subequations}
\begin{equation} \label{eq:SchurkX}
\begin{pmatrix}
\vec B_\Omega + \gamma\,(\Nbulk \ 0)\,\Xi_\Gamma^{-1}\,
\binom{\NbulkT}{0} & \vec C_\Omega \\
\vec C_\Omega^T & 0 
\end{pmatrix}
\begin{pmatrix}
\vec U^{m+1} \\ P^{m+1} 
\end{pmatrix}
= \begin{pmatrix}
\vec g
-\gamma\,(\Nbulk \ 0)\, \Xi_\Gamma^{-1}\,
\binom{0}{\vec{A}_\Gamma\,\vec{X}^{m}} \\
0
\end{pmatrix}
\end{equation}
and
\begin{equation}
\binom{\kappa^{m+1}}{\delta\vec{X}^{m+1}} = \Xi_\Gamma^{-1}\,
\binom{-\NbulkT\,\vec U^{m+1}}{-\vec{A}_\Gamma\,\vec{X}^{m}}\,.
\label{eq:SchurkXb}
\end{equation}
\end{subequations}
In practice we solve (\ref{eq:SchurkX}) with a preconditioned BiCGSTAB
iteration, with the preconditioner (\ref{eq:ESW}). For 
$\vec{\mathcal{P}}_{\vec B}$ we choose $\vec B_\Omega$ in the case $d=2$, 
and 20 SSOR iteration steps for $\vec B_\Omega$ in the case $d=3$,
and for $\mathcal{P}_{S}$ we use (\ref{eq:BFBt_proj1}). 
Here we replace $\vec B_\Omega$ in (\ref{eq:BFBt_proj1}) with 
$\vec B_\Omega + \gamma\,(\Nbulk \ 0)\,\Xi_\Gamma^{-1}\,\binom{\NbulkT}{0}$.

\subsection{Solution of the linear system for \XFEMGAMMA} \label{sec:41}
The linear system for the approximation from Theorem~\ref{thm:stabXFEM1} in
Section~\ref{sec:31} is given by (\ref{eq:lin},b) with $K^m_\pspace$ 
replaced by $K^m_\pspace+1$. However, in order to highlight the changes needed
for the implementation of our \XFEMGAMMA\ method, we use an alternative 
formulation here that builds on the matrix definitions as in (\ref{eq:lin},b).

The linear system for (\ref{eq:HGa}--d) with $\pspace^m$ replaced by
$\pspace^m_{\rm XFEM}$ can be formulated as:
Find $(\vec U^{m+1},P^{m+1},\lambda^{m+1},
\kappa^{m+1},\delta\vec{X}^{m+1})$, where $\vec X^{m+1} = \vec X^m + 
\delta\vec X^{m+1}$, such that
\begin{equation}
\begin{pmatrix}
 \vec B_\Omega & \vec C_\Omega & \vec D_\Omega &  -\gamma\,\Nbulk & 0 \\
 \vec C^T_\Omega & 0 & 0 & 0 & 0 \\
 \vec D^T_\Omega & 0 & 0 & 0 & 0 \\
 \NbulkT & 0 & 0 & 0 & -\frac1{\tau_m}\,\vec{N}_\Gamma^T \\
0 & 0 & 0 & \vec{N}_\Gamma & \vec{A}_\Gamma 
\end{pmatrix} 
\begin{pmatrix} \vec U^{m+1} \\ P^{m+1} \\ \lambda^{m+1} \\ \kappa^{m+1} \\ 
\delta\vec{X}^{m+1} \end{pmatrix}
=
\begin{pmatrix} \vec g \\ 0 \\ 0 \\
0 \\ -\vec{A}_\Gamma\,\vec{X}^{m} \end{pmatrix} \,,
\label{eq:linXFEM1}
\end{equation}
where $(\vec U^{m+1},P^{m+1},\lambda^{m+1},\kappa^{m+1},\delta\vec{X}^{m+1})\in
(\R^d)^{K^m_\uspace}\times \R^{K^m_\pspace} \times \R \times
\R^{K^m_\Gamma}\,\times (\R^d)^{K^m_\Gamma}$.
Here all the matrices are as defined in (\ref{eq:mats}), and the entries of
$\vec D_\Omega$, for $i = 1 ,\ldots, K_\uspace^m$, are given by
\begin{equation*} 
[\vec D_\Omega]_{i,1} := - \left\langle \phi_i^{\uspace^m}, 
\vec \nu^m \right\rangle_{\Gamma^m} 
\end{equation*}
As before, the system (\ref{eq:linXFEM1}) can 
be solved with a Schur complement formulation similarly to 
(\ref{eq:SchurkX},b), which is now given by
\begin{equation*}
\begin{pmatrix}
\vec B_\Omega + \gamma\,(\Nbulk \ 0)\,\Xi_\Gamma^{-1}\,
\binom{\NbulkT}{0} & \vec C_\Omega & \vec D_\Omega \\
\vec C_\Omega^T & 0 & 0 \\
\vec D_\Omega^T & 0 & 0
\end{pmatrix}
\begin{pmatrix}
\vec U^{m+1} \\ P^{m+1} \\ \lambda^{m+1}
\end{pmatrix}
= \begin{pmatrix}
\vec g
-\gamma\,(\Nbulk \ 0)\, \Xi_\Gamma^{-1}\,
\binom{0}{\vec{A}_\Gamma\,\vec{X}^{m}} \\
0 \\ 0
\end{pmatrix}
\end{equation*}
and (\ref{eq:SchurkXb}). 

\subsection{Assembly of interface-bulk cross terms} \label{sec:43}
We note that the assembly of the matrices arising from (\ref{eq:HGa}--d) is 
mostly standard. For the cross terms between bulk mesh and parametric mesh one 
needs to compute contributions of the form
$\left\langle \phi_i^{\uspace^m}, \chi^m_j \right\rangle_{\Gamma^m}$,
where $\{\phi_i^{\uspace^m}\}_{i=1}^{K_\uspace^m}$ 
and $\{\chi_j^m\}_{j=1}^{K_\Gamma^m}$ are the canonical
basis functions of $S^m_2$ and $\Wh$, respectively.
We recall that in \cite[\S4.5]{dendritic} the calculation of such 
contributions has been considered when $S^m_2$ is replaced by $S^m_1$. We
now extend these techniques to the space of piecewise quadratic functions
$S^m_2$.
Firstly, we need to compute the
intersections between bulk elements $\sigmaO^m_l$ and surface mesh elements
$\sigma^m_j$. For notational convenience, we will drop the subscripts $l$ and
$j$ in the remainder of this subsection.

In two space dimensions, i.e.\ $d = 2$, the intersection of a segment
$\sigma^m$ of the polygonal curve $\Gamma^m$ and a bulk mesh element 
$\sigmaO^m \in \mathcal{T}^m$ is always given by a segment, say 
$\sigmaO^m \cap \sigma^m = [\vec{q}_1, \vec{q}_2]$. Then the contribution over
$[\vec{q}_1, \vec{q}_2]$ for 
$\left\langle \phi_i^{\uspace^m}, \chi^m_j \right\rangle_{\Gamma^m}$ is
\begin{equation} \label{eq:num2dtrue}
\langle \phi_i^{\uspace^m}, \chi^m_j \rangle_{[\vec{q}_1, \vec{q}_2]}=
\tfrac16\,|\vec{q}_1 - \vec{q}_2|\,\sum_{k=0}^2 \omega_k\,
\phi_i^{\uspace^m}(\vec{q}_k)\,\chi^m_j(\vec{q}_k)\,,
\end{equation}
where $\vec{q}_0:=\frac12\,\sum_{k=1}^2 \vec{q}_k$ and $\omega_0=\frac23$, 
$\omega_1=\omega_2=\frac16$ from Simpson's rule.

The natural generalization of (\ref{eq:num2dtrue}) to $d=3$ is given as 
follows.
Here the intersection of a triangular element $\sigma^m$ of the polyhedral
surface $\Gamma^m$ with a bulk mesh element $\sigmaO^m$ is a convex $l$-polygon
$\mathcal{P}$, with $3\leq l \leq 7$. 
Some example intersections are given in Figure~\ref{fig:ts}, and an algorithm
to compute $\mathcal{P} = \sigmaO^m \cap \sigma^m$ is stated 
in \cite[p.\ 6284]{dendritic}. 
\begin{figure}
\center
\includegraphics[angle=-0,width=0.19\textwidth]{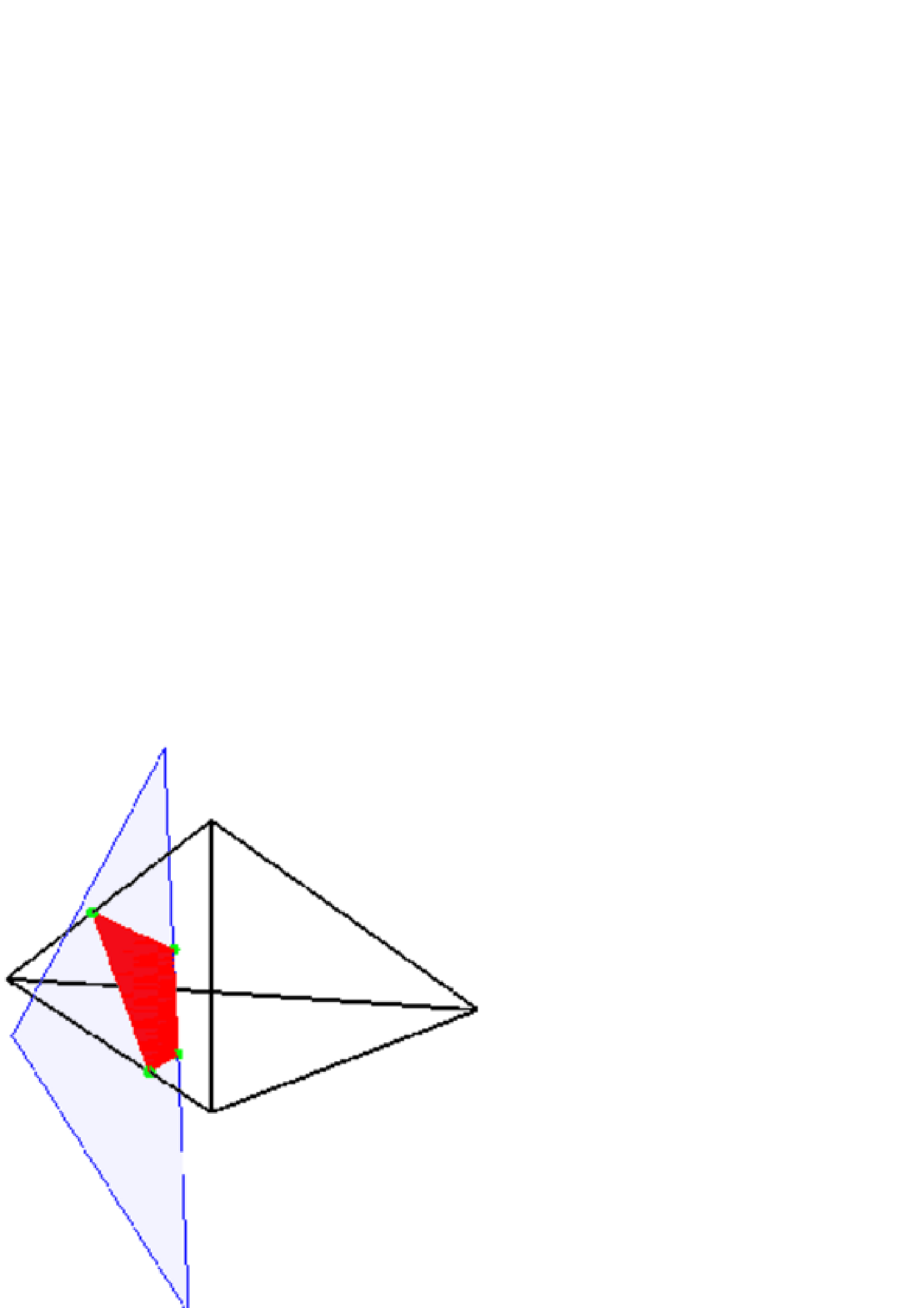}
\includegraphics[angle=-0,width=0.19\textwidth]{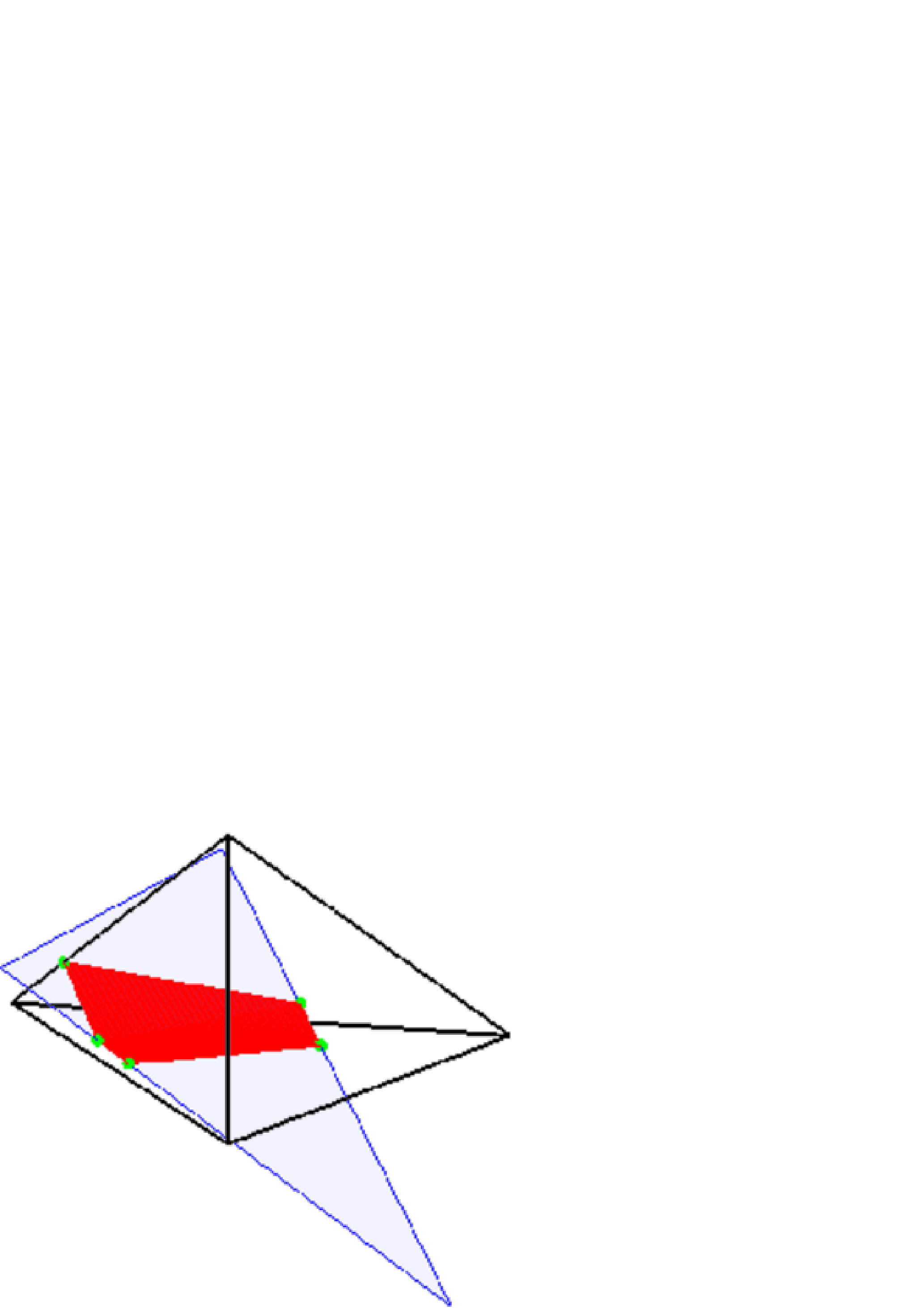}
\includegraphics[angle=-0,width=0.19\textwidth]{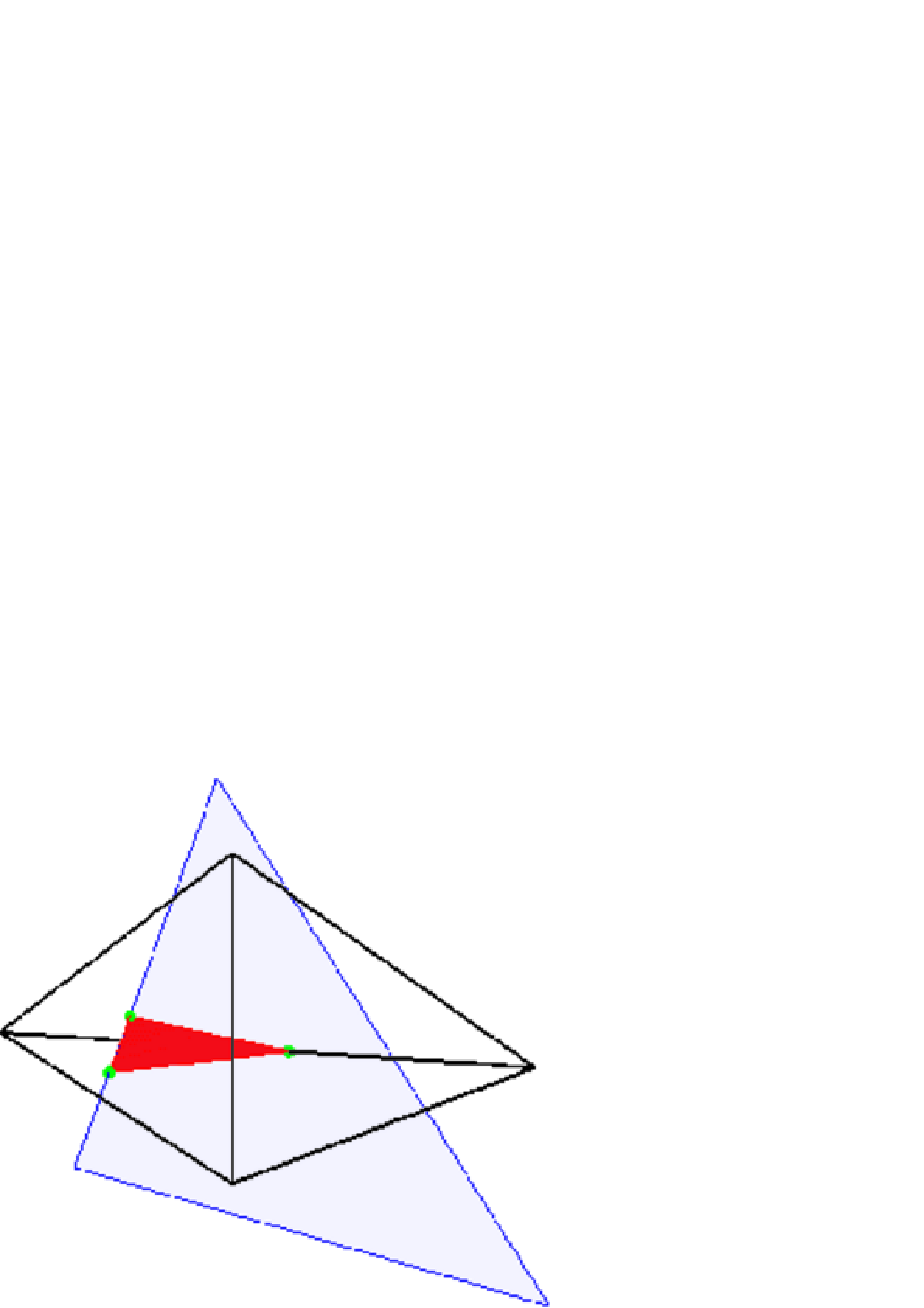}
\includegraphics[angle=-0,width=0.19\textwidth]{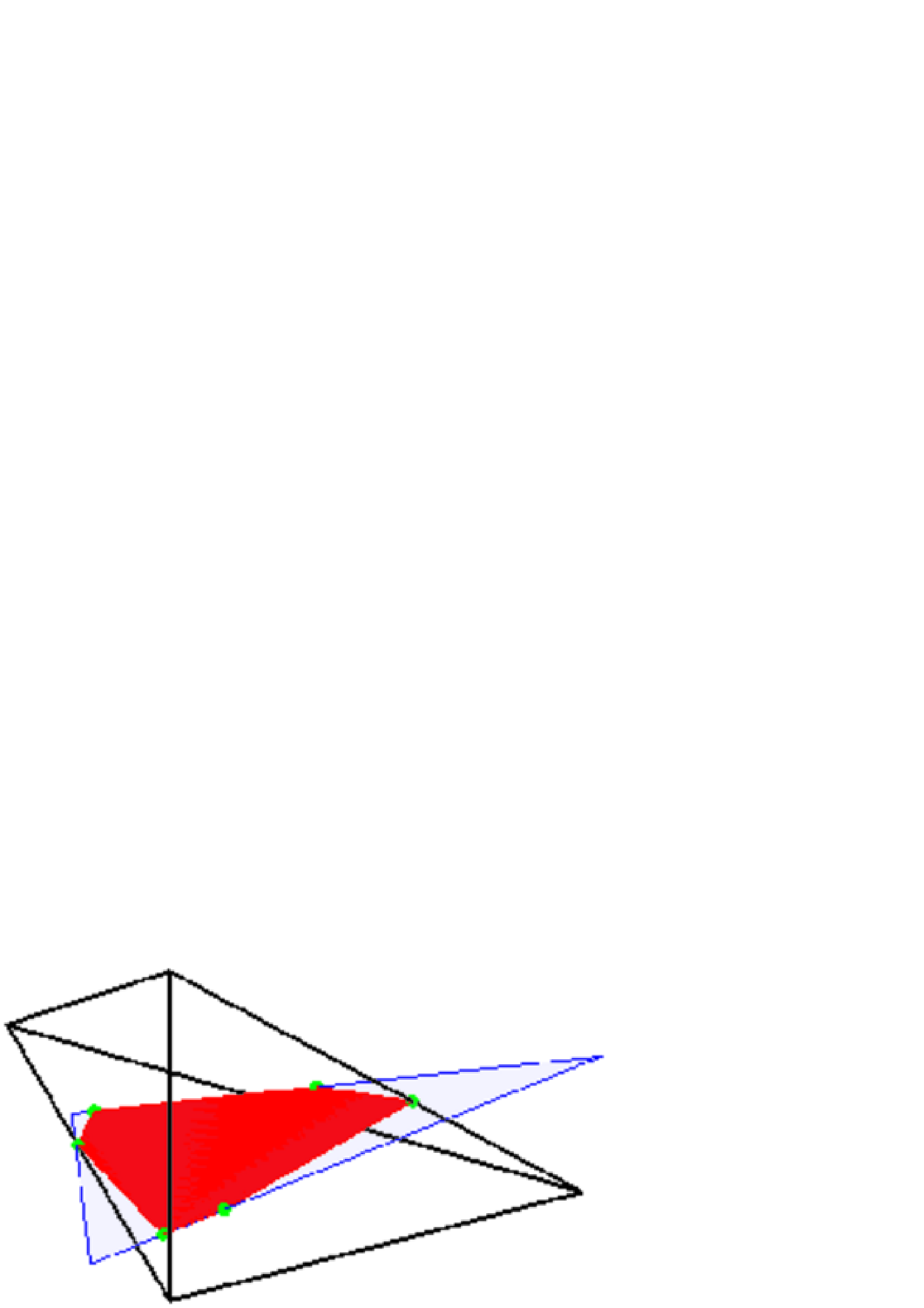}
\includegraphics[angle=-0,width=0.19\textwidth]{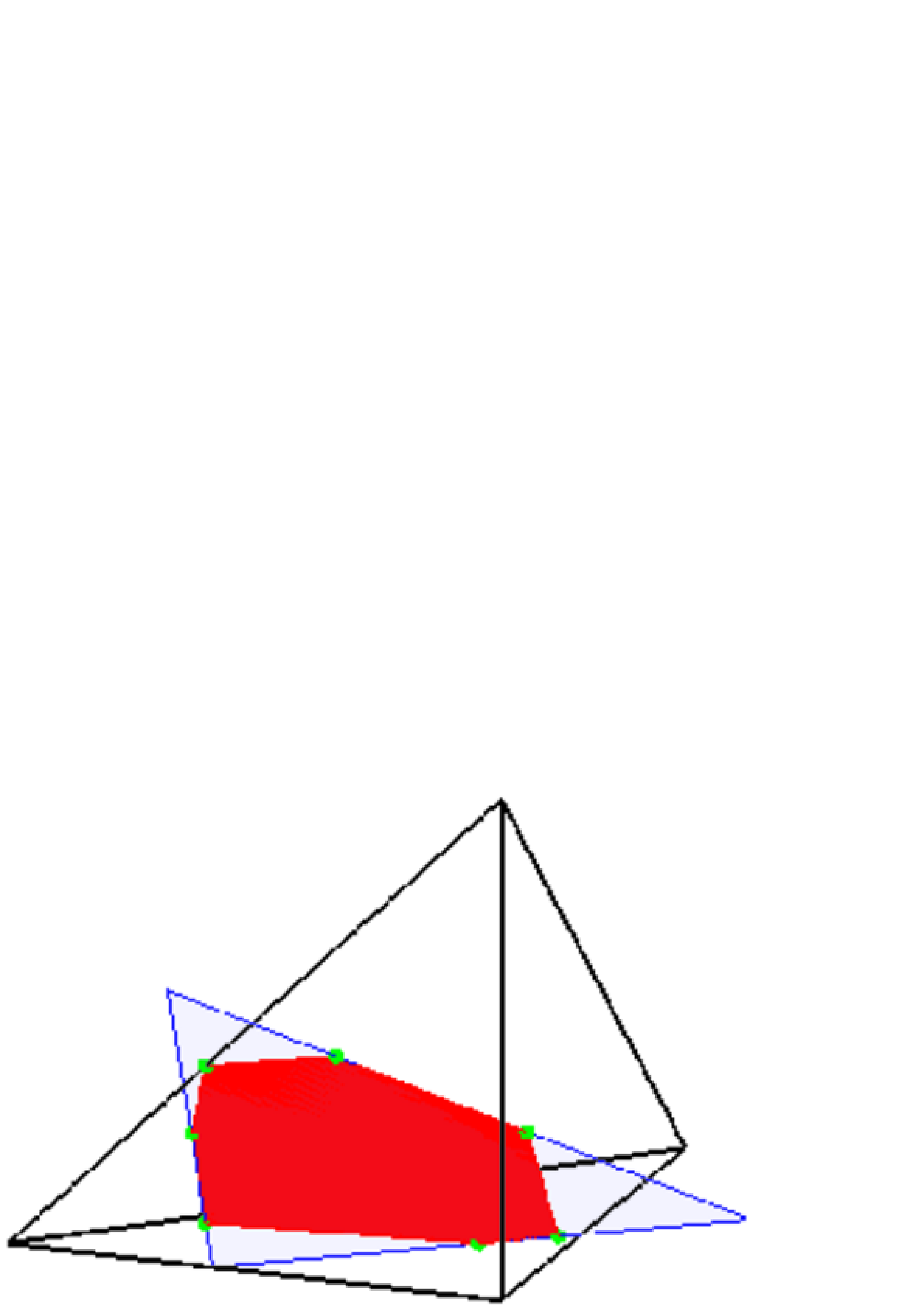}
\caption{Intersections of a triangle and a simplex in $\R^3$.}
\label{fig:ts}
\end{figure}%
Then the contribution over $\mathcal{P} \equiv \conv(\{\vec{q}_i\}_{i=1}^l)$ 
for $\left\langle \phi_i^{\uspace^m}, \chi^m_j \right\rangle_{\Gamma^m}$ can be
easy calculated by partitioning 
$\mathcal{P}$ into $l$ triangles with the help of
the centroid $\vec{q}_0:=\frac1l\,\sum_{k=1}^l \vec{q}_k$ of
$\mathcal{P}$, see Figure~\ref{fig:Psketch}. In particular, let
$\vec{p}_k:= \frac12\,[\vec{q}_0 + \vec{q}_k]$ and
$\vec{p}_{l+k} := \frac12\,[\vec{q}_k + \vec{q}_{k+1}]$ for $k=1,\ldots, l$, with
$\vec{q}_{l+1} := \vec{q}_1$, denote the edge midpoints of those triangles, and
let $\vec{c}_{k} := \frac13\,[\vec{q}_0 + \vec{q}_{k} + \vec{q}_{k+1}]$,
for $k=1,\ldots, l$, denote their barycentres. Then the contribution over 
$\mathcal{P}$ for 
$\left\langle \phi_i^{\uspace^m}, \chi^m_j \right\rangle_{\Gamma^m}$ is given
by
\begin{align} \label{eq:num3d3}
\left\langle \phi_i^{\uspace^m}, \chi^m_j \right\rangle_{\mathcal{P}}
& := \sum_{k=0}^l \omega_k^{\mathcal{P}}\,\phi_i^{\uspace^m}(\vec{q}_k)\,
\chi^m_j(\vec{q}_k)
+ \sum_{k=1}^{2\,l} \omega_{l+k}^{\mathcal{P}}\,\phi_i^{\uspace^m}(\vec{p}_k)\,
\chi^m_j(\vec{p}_k) \nonumber \\ &  \qquad
+ \sum_{k=1}^{l} \omega_{2\,l+k}^{\mathcal{P}}\,\phi_i^{\uspace^m}(\vec{c}_k)\,
\chi^m_j(\vec{c}_k)\,.
\end{align}
where the weights $\omega_k^{\mathcal{P}}$ in (\ref{eq:num3d3}) 
need to be defined such that the right hand side in (\ref{eq:num3d3}) 
is equal to $\int_{\mathcal{P}} \phi_i^{\uspace^m}\, \chi^m_j \dH{d-1}$.
Clearly here it suffices to find a quadrature rule that is exact for
cubics on $\mathcal{P}$. With the help of the above described partitioning of 
$\mathcal{P}$ into triangles this reduces to finding a quadrature rule that is
exact for cubics on triangles. Here we employ a quadrature rule
with sampling points at the vertices, at the edge midpoints and at the centroid
of the triangles. The weights for the sampling points are then given by
$\frac1{20}$, $\frac2{15}$ and $\frac9{20}$, respectively; see e.g.\
\cite{Stroud71}.

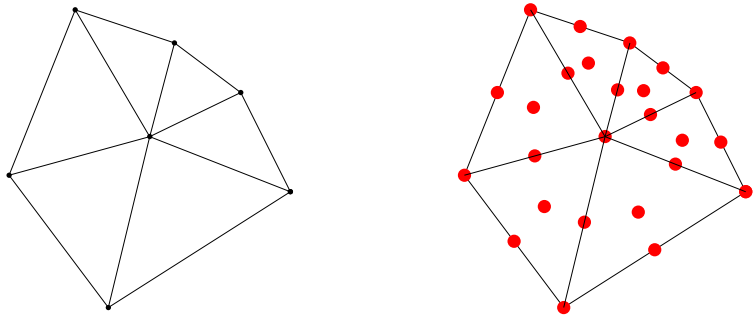
\begin{figure}
\center
\unitlength22mm
\psset{unit=\unitlength,linewidth=0.3pt}
\def\pointa{1,0}
\def\pointb{.7,.6}
\def\pointc{.3,.9}
\def\pointd{-.3,1.1}
\def\pointe{-.7,.1}
\def\pointf{-.1,-.7}
\def\pointq{.15,.33333333}
\def\pointab{.85,.3}
\def\pointbc{.5,.75}
\def\pointcd{0,1}
\def\pointde{-.5,.6}
\def\pointef{-.4,-.3}
\def\pointfa{.45,-.35}
\def\pointaq{.575,0.16666666666}
\def\pointbq{.425,.466666666666}
\def\pointcq{.225,.616666666666}
\def\pointdq{-.075,.716666666666}
\def\pointeq{-.275,.21666666666}
\def\pointfq{.025,-.183333333333}
\def\pointabq{0.616667, 0.311111}
\def\pointbcq{0.383333, 0.611111}
\def\pointcdq{0.05, 0.777778}
\def\pointdeq{-0.283333, 0.511111}
\def\pointefq{-0.216667, -0.0888889}
\def\pointfaq{0.35, -0.122222}
\begin{picture}(2,2) (-1,-0.7)
\psline (\pointa) (\pointb) (\pointc) (\pointd) (\pointe) (\pointf) (\pointa)
\psdots[linecolor=black,dotsize=2pt] 
(\pointa) (\pointb) (\pointc) (\pointd) (\pointe) (\pointf) (\pointa) (\pointq)
\psline (\pointq) (\pointa)
\psline (\pointq) (\pointb)
\psline (\pointq) (\pointc)
\psline (\pointq) (\pointd)
\psline (\pointq) (\pointe)
\psline (\pointq) (\pointf)
\end{picture}%
\qquad
\qquad
\begin{picture}(2,2) (-1,-0.7)
\psline (\pointa) (\pointb) (\pointc) (\pointd) (\pointe) (\pointf) (\pointa)
\psdots[linecolor=red,dotsize=5pt] 
(\pointa) (\pointb) (\pointc) (\pointd) (\pointe) (\pointf) (\pointa) (\pointq)
(\pointab) (\pointbc) (\pointcd) (\pointde) (\pointef) (\pointfa) 
(\pointaq) (\pointbq) (\pointcq) (\pointdq) (\pointeq) (\pointfq)
(\pointabq) (\pointbcq) (\pointcdq) (\pointdeq) (\pointefq) (\pointfaq) 
\psline (\pointq) (\pointa)
\psline (\pointq) (\pointb)
\psline (\pointq) (\pointc)
\psline (\pointq) (\pointd)
\psline (\pointq) (\pointe)
\psline (\pointq) (\pointf)
\end{picture}%
\caption{Sketch of the partitioning of $\mathcal{P}$ into triangles (left), 
and the sampling points for (\ref{eq:num3d3}) (right).}
\label{fig:Psketch}
\end{figure}

For the definitions of $\rho^m$ and $\mu^m$ in (\ref{eq:rhoma},b) the
disjoint partition $\mathcal{T}^m = \mathcal{T}^m_- \cup \mathcal{T}^m_+ \cup
\mathcal{T}^m_{\Gamma^m}$ of bulk elements is needed, and this can easily be
found with e.g.\ Algorithm~4.1 in \cite{crystal}.  
For the strategy (\ref{eq:rhomc}) we need in addition a procedure to compute
$\mathcal{L}^d(o^m \cap \Omega^m_-)$ for all elements  
$o^m \in \mathcal{T}^m_{\Gamma^m}$. 
Let $V:= o^m \cap \Omega^m_-$. Then the divergence theorem yields that
\begin{equation} \label{eq:volV}
\mathcal{L}^d(V) = \int_V 1 \dL{d} = 
\tfrac1d\,\int_{\partial V} (\vec\id - \vec z_0)\,.\,\vec\nu_V \dH{d-1} \,,
\end{equation}
where $\vec\id$ is the identity function on $\R^d$, 
$\vec z_0 \in \R^d$ is an arbitrarily fixed point, and where
$\vec\nu_V$ denotes the outer normal to $V$. 
Here we note that $\partial V$ is a union of flat facets with
$\vec\nu_V = \vec\nu^m$ on $o^m \cap \Gamma^m$ and
$\vec\nu_V = \vec\nu_{o^m}$, the outer normal of $o^m$, on 
$\partial o^m \cap \Omega^m_-$.
Hence the integral in
(\ref{eq:volV}) simplifies on noting that $\vec\id\,.\,\vec\nu_V$ is 
constant on each facet, and vanishes on each facet that contains $\vec z_0$.
Now assume that $o^m$ has an edge/face that is not intersected by 
$\Gamma^m$. Any such element we call {\em regularly cut}, and for the element
$o^m$ at hand we denote by $F$ the edge/face that is not cut by the interface. 
Now let $\vec z_0$ be the vertex opposite $F$.
Then it follows from (\ref{eq:volV}) that
\begin{subequations}
\begin{align} 
\mathcal{L}^d(V) & = 
\tfrac1d\,\int_{o^m \cap \Gamma^m} (\vec\id - \vec z_0  )\,.\,\vec\nu^m
\dH{d-1} \qquad\qquad \text{if $\vec z_0 \in {\Omega^m_-}$.} \label{eq:vol1} \\
\intertext{Similarly, it holds that}
\mathcal{L}^d(V) & = 
\tfrac1d\,\int_{F} (\vec\id - \vec z_0  )\,.\,\vec\nu_{o^m} \dH{d-1} 
+ \tfrac1d\,\int_{o^m \cap \Gamma^m} (\vec\id - \vec z_0  )\,.\,\vec\nu^m
\dH{d-1} \nonumber \\ & = 
\mathcal{L}^d(o^m) + 
\tfrac1d\,\int_{o^m \cap \Gamma^m} (\vec\id - \vec z_0  )\,.\,\vec\nu^m
\dH{d-1} \qquad\qquad \text{if $\vec z_0 \in {\Omega^m_+}$.} \label{eq:vol2}
\end{align}
\end{subequations}
Hence it follows from (\ref{eq:vol1},b) that for regularly cut elements we can
calculate $\mathcal{L}^d(V) = \mathcal{L}^d(o^m \cap \Omega^m_-)$ 
if we can decide for each vertex
of the bulk mesh whether it belongs to ${\Omega^m_-}$ or to
${\Omega^m_+}$. In practice this information can, for example, be obtained 
with the algorithm presented in \cite[Algorithm~4.2]{crystal}. 
On setting
\begin{equation*} 
\widetilde{\mathcal{L}}^d(o^m \cap \Omega^m_-) = \begin{cases} 
{\mathcal{L}}^d(o^m \cap \Omega^m_-) & \text{if $o^m$ is regularly cut}\,, \\
\frac12\,\mathcal{L}^d(o^m) & \text{else}\,,
\end{cases}
\end{equation*}
it holds that
\begin{equation} \label{eq:Lhat}
\widehat{\mathcal{L}}^d(\Omega^m_-) := 
\sum_{o \in \mathcal{T}^m_-} \mathcal{L}^d(o) + 
\sum_{o \in \mathcal{T}^m_{\Gamma^m}}
\widetilde{\mathcal{L}}^d(o \cap \Omega^m_-)
\end{equation}
is an approximation to $\mathcal{L}^d(\Omega^m_-)$ that is exact if all the
elements in $\mathcal{T}^m_{\Gamma^m}$ are regularly cut.
It remains to compute 
${\mathcal{L}}^d(o^m \cap \Omega^m_-)$ for all the elements in 
$\mathcal{T}^m_{\Gamma^m}$ that are not regularly cut. Let $o^m$ be such an
element, and assume that $o^m$ itself is partitioned into smaller elements.
Then it is straightforward to extend the definition (\ref{eq:Lhat}) to this
partitioning of $o^m$ to yield a definition for 
$\widehat{\mathcal{L}}^d(o^m \cap \Omega^m_-)$, where an analogue of the
decomposition $\mathcal{T}^m = \mathcal{T}^m_- \cup \mathcal{T}^m_+ \cup
\mathcal{T}^m_{\Gamma^m}$ needs to be defined for the local partitioning of
$o^m$.

Hence in order to compute ${\mathcal{L}}^d(o^m \cap \Omega^m_-)$ for a not
regularly cut element it is sufficient to locally refine it until
$\widehat{\mathcal{L}}^d(o^m \cap \Omega^m_-) = 
{\mathcal{L}}^d(o^m \cap \Omega^m_-)$. In practice we use an iterative
bisectioning procedure, where we stress that the refined partitionings are
only created for the purpose of computing the integral in (\ref{eq:volV}). In
particular, the refinements do not affect the approximation spaces
$\uspace^m$ and $\pspace^m$.
In order to avoid excessive refinement we stop the iterative bisectioning
procedure of the not regularly cut elements whenever
\begin{equation*}
\left| \mathcal{L}^d(\Omega^m_-) - \sum_{o^m \in \mathcal{T}^m_-}
\mathcal{L}^d(o^m) - \sum_{o^m \in \mathcal{T}^m_{\Gamma^m}}
\widehat{\mathcal{L}}^d(o^m \cap \Omega^m_-) \right| < {\rm tol}_V\,,
\end{equation*}
where ${\rm tol}_V$ is a small tolerance,
and where we note that 
$\mathcal{L}^d(\Omega^m_-) = \tfrac1d\,\int_{\Gamma^m} \vec X^m \,.\,\vec\nu^m
\dH{d-1}$ is known exactly. 
In practice we always use ${\rm tol}_V = 10^{-8}$.

Finally, we mention that determining if $\vec z_0 \in \Omega^m_\pm$ 
in (\ref{eq:vol1},b) in practice
is not very efficient. A better, and more robust, strategy makes use of the
fact that $\vec z_0 \in \Omega^m_\pm$ in (\ref{eq:vol1},b) is equivalent to the
integral in (\ref{eq:vol1}) 
being negative/positive, i.e.\
$$
\vec z_0 \in \Omega^m_\pm \quad \iff 
\quad \mp \int_{o^m \cap \Gamma^m} (\vec\id - \vec z_0)
\,.\,\vec\nu^m \dH{d-1} > 0\,.
$$
In practice it remains to robustly deal with the case that 
$\left|\int_{o^m \cap \Gamma^m} (\vec\id - \vec z_0)
\,.\,\vec\nu^m \dH{d-1}\right|$ is very small,
which means that numerical noise may influence the sign of the integral. 
But in the vast majority of the cases, the smallness (in magnitude) of the
integral will be caused by $\mathcal{H}^{d-1}(o^m \cap \Gamma^m)$ being small,
because $F$, the edge/face opposite $\vec z_0$, 
is not cut by $\Gamma^m$. Then the
sign of the integral can in general be robustly detected by inspecting the sign
of the (piecewise constant) integrand $(\vec\id - \vec z_0) \,.\,\vec\nu^m$.
In all instances where the sign of 
$\int_{o^m \cap \Gamma^m} (\vec\id - \vec z_0) \,.\,\vec\nu^m \dH{d-1}$ cannot
be robustly ascertained in practice, it is prudent to treat the element $o^m$
as if it is a not regularly cut element, and to proceed as outlined above. This
is the strategy that we use in all our computations for (\ref{eq:rhomc}). 
This works well for $d=2$, mainly because not regularly cut elements are very
rarely encountered. Unfortunately, this is different for $d=3$, where
not regularly cut elements are far more generic. On recalling 
Figure~\ref{fig:ts}, and especially the last two examples there, this may have
to do with the fact that for $d=3$ it is possible for a single element
$\sigma^m$ of $\Gamma^m$ to intersect all $d+1$ faces of a bulk element $o^m$,
something that is not possible for $d=2$. Unfortunately, this means that
(\ref{eq:rhomc}) at this stage is not practical for $d=3$.

\setcounter{equation}{0}
\section{Mesh adaptation} \label{sec:5}
We implemented our finite element approximation (\ref{eq:HGa}--d) within 
the framework of the finite element toolbox ALBERTA, see \cite{Alberta}. In
what follows we describe the mesh refinement strategies used for both bulk and
interface mesh. These are similar to the approach described in 
\cite{dendritic}.

\subsection{Bulk mesh adaptation} \label{sec:51}
Given a polyhedral approximation $\Gamma^m$, $m\geq0$, of the interface, 
we employ the following mesh adaptation strategy for the bulk mesh
triangulation $\mathcal{T}^m$. The strategy is inspired by a similar
refinement algorithm proposed in \cite{dendritic}, and it 
results in a fine mesh around 
$\Gamma^m$ and a coarse mesh further away from it.

In particular, given two integer parameters $N_f >  N_c$, we set 
$h_{f} = \frac{2\,H}{N_{f}}$, $h_{c} =  \frac{2\,H}{N_{c}}$, where for
simplicity we assume that $\Omega = \times_{i=1}^d (L_i,U_i)$
with $H = \frac12\,\min_{i=1,\ldots, d} (U_i - L_i)$. Then we set
\begin{equation} \label{eq:sec51}
 vol_{f} = \frac{h_{f}^{d}}{d!} \quad \mbox{and} \quad 
vol_{c} = \frac{h_{c}^{d}}{d!}\,,
\end{equation}
that is, for $d=3$, 
$vol_f$ denotes the volume of a tetrahedron with three right-angled 
and isosceles faces with side length $h_{f}$, while for $d=2$ it denotes the
area of a right-angled and isosceles triangle with side length $h_f$, and
similarly for $vol_c$.

Now starting with the triangulation $\mathcal{T}^{m-1}$ from the previous time
step, where here for convenience we define $\mathcal{T}^{-1}$ to be a uniform
partitioning of mesh size $h_c$, we obtain $\mathcal{T}^m$ as follows. First
any element $\sigmaO^{m-1} \in \mathcal{T}^{m-1}$ satisfying
$\mathcal{L}^d(\sigmaO^{m-1}) \geq 2\,vol_f$ and 
$\sigmaO^{m-1} \cap \Gamma^m \not= \emptyset$
is marked for refinement. In addition, any element satisfying
$\mathcal{L}^d(\sigmaO^{m-1}) \geq 2\,vol_f$, 
for which a direct neighbour intersects
$\Gamma^m$, is also marked for refinement.
Similarly, an element that is not marked for refinement is marked for 
coarsening if it satisfies 
$\mathcal{L}^d(\sigmaO^{m-1}) \leq \frac{1}{2}\,vol_{c}$ and 
$\sigmaO^{m-1} \cap \Gamma^m = \emptyset$.
Now all the elements marked for refinement are halved into two smaller
elements with the help of a simple bisectioning procedure, see
\cite{Alberta} for details. In order to avoid hanging
nodes, this will in general lead to refinements of elements that were not
originally marked for refinement. Similarly, an element that is marked for 
coarsening is coarsened only if all of its 
neighbouring elements are marked for coarsening as well. For more details on
the refining and coarsening itself we refer to \cite{Alberta}. 

This marking and refinement process is repeated until no more
elements are required to be refined or coarsened.
Thus we obtain the triangulation $\mathcal{T}^{m}$ on which, together with 
$\Gamma^m$, the new
solutions $(\vec U^{m+1}, P^{m+1}, \kappa^{m+1}, \vec X^{m+1})$ 
will be computed. 
In practice only at the
first time step, $m=0$, more than one of the described refinement cycles 
are needed.

\subsection{Parametric mesh adaptation} \label{sec:52}
As mentioned before, the equation (\ref{eq:HGd}) means that the vertices of the
parametric approximation $\Gamma^m$ are in general very well distributed, so
that mesh smoothing (redistribution) is not necessary in practice. 
Similarly, an adaptation 
of the parametric mesh is in general not necessary. However, in simulations
where the total surface area $\mathcal{H}^{d-1}(\Gamma^m)$ 
increases significantly over time,
it is beneficial to locally refine the triangulation where elements have become
too large. 

The mesh refinement strategy can be described as follows, where we assume that
an arbitrary polyhedral approximation $\Gamma^0$ of $\Gamma(0)$ is given. Let
\begin{equation*}
vol_{max} := \max_{j = 1 ,\ldots, J^0_\Gamma} \mathcal{H}^{d-1}(\sigma^0_\Gamma)\,.
\end{equation*}
Then for an arbitrary $m \geq 0$, given $\Gamma^m$ and the solution
$(\vec U^{m+1}, P^{m+1}, \kappa^{m+1}, \vec X^{m+1})$ to (\ref{eq:HGa}--d), 
we define $\Gamma^{m+1,\star} := \vec{X}^{m+1}(\Gamma^m)$. Clearly,
$\Gamma^{m+1,\star}=\bigcup_{j=1}^{J^m_\Gamma} 
\overline{\sigma^{m+1,\star}_j}$,
where $\sigma^{m+1,\star}_j:= \vec{X}^{m+1}(\sigma^m_j)$, $j=1,\ldots, J^m_\Gamma$.
We will now define a finer triangulation $\bigcup_{j=1}^{J^{m+1}_\Gamma} 
\overline{\sigma^{m+1}_j}$, with $J^{m+1}_\Gamma \geq J^{m}_\Gamma$, for the
same polyhedral surface $\Gamma^{m+1,\star} = \Gamma^{m+1}$.
To this end, we mark all elements $\sigma^{m+1,\star}_j$, that have become too
large due to the growth of the interface, for refinement. In particular, any
element with 
$\mathcal{H}^{d-1}(\sigma^{m+1,\star}_j) \geq \frac74\,vol_{max}$ is marked for
refinement. Then all refined elements are replaced with two smaller ones with
the help of a simple bisectioning procedure. 
Note that this bisection does not change the
polyhedral surface $\Gamma^{m+1,\star} = \Gamma^{m+1}$. Moreover, we note that
in order to prevent hanging nodes, in general more elements will be refined
than have initially been marked for refinement. The cycle of marking and
refining is repeated until no more refinements are required. In practice,
this was always the case after just one such refinement step.

In conclusion we stress that the given parametric mesh adaptation algorithm 
means that
Theorem~\ref{thm:stabstab} still holds. Moreover, apart from this simple mesh
refinement, no other changes were performed on the parametric mesh in any of 
our simulations. 
In particular, no mesh smoothing (redistribution) was required.

\setcounter{equation}{0}
\section{Numerical results}  \label{sec:6}
Throughout this section we use uniform time steps
$\tau_m=\tau$, $m=0,\ldots, M-1$.
In addition, we set $\vec U^0 = \vec 0$, $\beta= 0$ and, unless otherwise
stated, we employ (\ref{eq:rhoma}) and \XFEMGAMMA.

We will often present detailed discretization parameters and CPU times for our 
simulations. Here the CPU times, which we report in seconds, correspond to a 
single-thread computation on an
Intel Xeon E5-2643 (3.3 GHz) processor with 16 GB of main memory. To summarize
the discretization parameters we use the shorthand notation
$n\,{\rm adapt}_{k,l}^{(\star)}$, where the superscript
$(\star) \in \{ (1), (0), (1,0) \}$ indicates which of the elements 
(\ref{eq:P2P1}--c) is employed.
The subscripts refer to the fineness of the spatial discretizations, i.e.\
for the set
$n\,{\rm adapt}_{k, l}^{(\cdot)}$ it holds that 
$N_f = 2^k$ and $N_c = 2^l$, recall
(\ref{eq:sec51}). For the case $d=2$ we have in addition that 
$K^0_\Gamma = J^0_\Gamma = 2^k$, while for $d=3$ it holds that
$(K^0_\Gamma, J^0_\Gamma) = (770, 1536), (1538, 3072), (3074, 6144)$ for 
$k = 5,6,7$. 
Finally, the uniform time step size 
for the set $n\,{\rm adapt}_{k,l}^{(\cdot)}$ is given by $\tau = 10^{-3} / n$,
and if $n=1$ we write ${\rm adapt}_{k, l}^{(\cdot)}$.

\subsection{Numerical results in 2d} \label{sec:61}
In this subsection we present numerical results for our approximation
(\ref{eq:HGa}--d) for the case $d = 2$. In particular, we will present
benchmark computations for the two test cases proposed in
\cite[Table~I]{HysingTKPBGT09}.
To this end, we define the following benchmark
quantities for the continuous solution $(\vec u, p, \Gamma)$ of 
(\ref{eq:2a}--h). 
Let $y_c(t) = \int_{\Omega_-(t)} x_2 \dL2 / \mathcal{L}^2(\Omega_-(t))$ denote
the $x_2$-component of the bubble's centre of mass. Let $\strikec(t)$ denote 
the ``degree of circularity'' of $\Gamma(t)$, which is defined as the ratio of
the perimeter of an area-equivalent circle and $\mathcal{H}^{1}(\Gamma(t))$. 
Finally, let 
$V_c(t) = \int_{\Omega_-(t)} u_2(t) \dL2 / \mathcal{L}^2(\Omega_-(t))$ 
denote the bubble's rise velocity, where 
$\vec u(\cdot,t) = 
(u_1(\cdot,t),u_2(\cdot,t))^T$. 
In this paper, we use the following discrete approximations of
these benchmark quantities:
\begin{equation} \label{eq:benchmarkm}
y_c^m = \frac1{\mathcal{L}^2(\Omega_-^m)}\,\int_{\Omega_-^m} x_2 \dL2\,,
\quad 
\strikec^m = 2\,[\pi\,\mathcal{L}^2(\Omega_-^m)]^\frac12\,
[\mathcal{H}^{1}(\Gamma^m)]^{-1}\,, \quad 
V^m_c = \frac{(\rho^m_-\,U^m_2, 1)}{(\rho^m_-,1)}\,,
\end{equation}
where $\rho^m_-\in S^m_0$ is defined as in (\ref{eq:rhoma},b) 
but with $\rho_+$ replaced by zero.
Finally, we also define the relative overall area/volume loss as
$$
\Mloss = 
\frac{\mathcal{L}^d(\Omega^0_-) - \mathcal{L}^d(\Omega^M_-)}
{\mathcal{L}^d(\Omega^0_-)}\,,
$$
and as a measure of the mesh quality we introduce the element ratios
\begin{equation}
r^m := \frac{\max_{j=1,\ldots, J^m_\Gamma} \mathcal{L}^d(\sigma^m_j)}
{\min_{j=1,\ldots, J^m_\Gamma} \mathcal{L}^d(\sigma^m_j)}\,,
\qquad m = 0,\ldots,M\,.
\label{eq:r}
\end{equation}

\subsubsection{2d benchmark problem 1} \label{sec:613}
We use the setup described in \cite{HysingTKPBGT09}, see Figure~2 there;
i.e.\ $\Omega = (0,1) \times (0,2)$ with 
$\partial_1\Omega = [0,1] \times \{0,2\}$ and 
$\partial_2\Omega = \{0,1\} \times (0,2)$.
Moreover, $\Gamma_0 = \{\vec z \in \R^2 : |\vec z - (\frac12, \frac12)^T| =
\frac14\}$.
The physical parameters from the test case 1 in \cite[Table~I]{HysingTKPBGT09} 
are then given by
\begin{equation} \label{eq:Hysing1}
\rho_+ = 1000\,,\quad \rho_- = 100\,,\quad \mu_+ = 10\,,\quad \mu_- = 1\,,\quad
\gamma = 24.5\,,\quad \vec f_1 = -0.98\,\vec\ek_d\,,\quad \vec f_2 = \vec 0\,.
\end{equation}
The time interval chosen for the simulation is $[0,T]$ with $T=3$.

Some discretization parameters and CPU times for our approximation
(\ref{eq:HGa}--d) are shown in Table~\ref{tab:CPU}. Here and throughout 
the CPU times correspond to
computations for the simple strategy (\ref{eq:rhoma}), but the times for the 
more involved choice (\ref{eq:rhomc}) are very similar.
\begin{table}
\center
\begin{tabular}{l|r|r|r|r|r|r}
\hline
& $K^0_\Gamma$ & $J^0_\Omega$ & NDOF$_{\rm bulk}$ & $M$ & CPU &CPU \XFEMGAMMA\\
\hline 
adapt$_{5,2}^{(1)}$  &   32 &   536 &   2475 & 3000 &   78  &    94 \\
adapt$_{7,3}^{(1)}$  &  128 &  2320 &  10563 & 3000 &  610  &   790 \\
2\,adapt$_{9,4}^{(1)}$ &  512 &  9728 &  44019 & 6000 & 8400  &  20640 \\
5\,adapt$_{11,5}^{(1)}$ & 2048 & 39328 & 177459 & 15000 & 138100 & 327700 \\
\hline
adapt$_{5,2}^{(1,0)}$  &   32 &   536 &   3011 & 3000 &  229   &  401 \\
adapt$_{7,3}^{(1,0)}$  &  128 &  2320 &  12883 & 3000 & 2590   & 13900 \\
2\,adapt$_{9,4}^{(1,0)}$ &  512 &  9728 &  53747 & 6000 & 40900 & 111800 \\ 
\hline
\end{tabular}
\caption{Simulation statistics and timings for the test case 1 in 
\cite{HysingTKPBGT09}.}
\label{tab:CPU}
\end{table}%
Some quantitative values for computations with the P2-P1 element 
(\ref{eq:P2P1}) 
are given in Table~\ref{tab:dataa}. Here the observed relative area losses 
for the runs without \XFEMGAMMA\ were $32.1\%$, $8.2\%$, $2.1\%$ and $0.5\%$;
and so we do not present the remaining statistics
for these runs. Here we recall from \S\ref{sec:31} that for the semidiscrete
continuous-in-time variant of (\ref{eq:HGa}--d) with \XFEMGAMMA\ true volume
conservation (area conservation in 2d) holds. 
In general, we also observe excellent volume conservation for
the fully discrete scheme.
Similarly, in Table~\ref{tab:data10a} we present 
quantitative values for computations with the P2-(P1+P0) element 
(\ref{eq:P2P10}). Once
again we omit the results for the runs without \XFEMGAMMA, 
for which the observed relative area losses were 
$7.2\%$, $1.9\%$ and $0.5\%$. 
We observe that the results in Tables~\ref{tab:dataa} and \ref{tab:data10a}
are in very good agreement with the corresponding numbers from the finest
discretization run of group 3 in \cite{HysingTKPBGT09}, which are given by
0.9013, 1.9000, 0.2417, 0.9239 and 1.0817. Here we note that of the three
groups in \cite{HysingTKPBGT09}, group 3 shows the most accurate and the most
consistent results for the test case 1. Their method is based on the ALE
approach with a piecewise quadratic velocity space enriched with cubic bubble
functions, with a discontinuous piecewise linear pressure space and with a
second order, fractional step $\theta$-scheme in time.
We stress that our simulation results appear to be in better agreement with 
the results from group 3 than other recently published results on the same 
benchmark problem, see e.g.\ \cite[Tables~III-V]{AlandV12}, 
\cite[Table~XVI]{ZahediKK12} and \cite[Table~VII]{ChoCCY12}.
\begin{table}
\center
\begin{tabular}{l|r|r|r|r}
\hline
& adapt$_{5,2}^{(1)}$ & adapt$_{7,3}^{(1)}$ & 
2\,adapt$_{9,4}^{(1)}$ & 5\,adapt$_{11,5}^{(1)}$\\
\hline 
$\Mloss$ & 
  0.0\% &  0.0\% &  0.0\% &  0.0\% \\
$\strikec_{\min}$ & 
 0.9136 & 0.9068 & 0.9034 & 0.9019 \\
$t_{\strikec = \strikec_{\min}}$ & 
 2.0760 & 1.9430 & 1.9110 & 1.9028 \\
$V_{c,\max}$ & 
 0.2478 & 0.2415 & 0.2414 & 0.2416 \\
$t_{V_c = V_{c,\max}}$ & 
 0.9470 & 0.9360 & 0.9255 & 0.9200 \\
$y_c(t=3)$ & 
 1.0906 & 1.0823 & 1.0815 & 1.0817 \\
\hline
\end{tabular}
\begin{tabular}{l|r|r|r|r}
\hline
& adapt$_{5,2}^{(1)}$ & adapt$_{7,3}^{(1)}$ & 
2\,adapt$_{9,4}^{(1)}$ & 5\,adapt$_{11,5}^{(1)}$\\ 
\hline 
$\Mloss$ & 
  0.0\% &  0.0\% &  0.0\% &  0.0\% \\
$\strikec_{\min}$ & 
 0.9061 & 0.9034 & 0.9018 & 0.9014 \\
$t_{\strikec = \strikec_{\min}}$ & 
 1.9260 & 1.9040 & 1.8990 & 1.8988 \\
$V_{c,\max}$ & 
 0.2430 & 0.2422 & 0.2418 & 0.2417 \\
$t_{V_c = V_{c,\max}}$ & 
 0.9210 & 0.9270 & 0.9250 & 0.9210 \\
$y_c(t=3)$ & 
 1.0894 & 1.0832 & 1.0821 & 1.0818 \\
\hline
\end{tabular}
\caption{Some quantitative results for the test case 1 in 
\cite{HysingTKPBGT09}. Here we use the P2-P1 element (\ref{eq:P2P1}) with 
(\ref{eq:rhoma}) and \XFEMGAMMA. 
The bottom table is for (\ref{eq:rhomc}) and \XFEMGAMMA.}
\label{tab:dataa}
\end{table}%
\begin{table}
\center
\begin{tabular}{l|r|r|r}
\hline
& adapt$_{5,2}^{(1,0)}$ & adapt$_{7,3}^{(1,0)}$ & 
2\,adapt$_{9,4}^{(1,0)}$ \\
\hline 
$\Mloss$ & 
  0.0\% &  0.0\% &  0.0\% \\
$\strikec_{\min}$ &
 0.9055 & 0.9044 & 0.9027 \\
$t_{\strikec = \strikec_{\min}}$ & 
 2.0950 & 1.9470 & 1.9105 \\
$V_{c,\max}$ & 
 0.2483 & 0.2414 & 0.2413 \\
$t_{V_c = V_{c,\max}}$ & 
 0.9490 & 0.9480 & 0.9255 \\
$y_c(t=3)$ & 
 1.0837 & 1.0806 & 1.0811 \\
\hline
\end{tabular}
\begin{tabular}{l|r|r|r}
\hline
& adapt$_{5,2}^{(1,0)}$ & adapt$_{7,3}^{(1,0)}$ & 
2\,adapt$_{9,4}^{(1,0)}$ \\     
\hline 
$\Mloss$ & 
  0.0\% &  0.0\% &  0.0\% \\
$\strikec_{\min}$ & 
 0.9007 & 0.9015 & 0.9014 \\
$t_{\strikec = \strikec_{\min}}$ & 
 1.9410 & 1.9040 & 1.9000 \\
$V_{c,\max}$ & 
 0.2417 & 0.2418 & 0.2417 \\
$t_{V_c = V_{c,\max}}$ & 
 0.9250 & 0.9250 & 0.9230 \\
$y_c(t=3)$ & 
 1.0873 & 1.0824 & 1.0819 \\
\hline
\end{tabular}
\caption{Some quantitative results for the test case 1 in 
\cite{HysingTKPBGT09}. Here we use the P2-(P1+P0) element (\ref{eq:P2P10}) with
(\ref{eq:rhoma}) and \XFEMGAMMA.
The bottom table is for (\ref{eq:rhomc}) and \XFEMGAMMA.}
\label{tab:data10a}
\end{table}%

Overall our results that appear to agree most closely with the results from
group 3 in \cite{HysingTKPBGT09} are the ones for the finest run with the 
P2-(P1+P0) element and the strategy (\ref{eq:rhomc});
see Table~\ref{tab:data10a}. 
In what follows we
present some visualizations of the numerical results for that run.
Here we recall from Table~\ref{tab:CPU} that this run took less CPU time then
the run for 5\,adapt$_{11,5}^{(1)}$, i.e.\ the finest discretization for the
P2-P1 element.
A plot of $\Gamma^M$ can be seen in Figure~\ref{fig:bubble}, while the time
evolution of the circularity, the centre of mass and the rise velocity
are shown in Figures~\ref{fig:circularity} and \ref{fig:comrise}. 
A plot of the discrete energy 
as well as a plot of 
the mesh quality of $\Gamma^m$ over
time, recall (\ref{eq:r}), are shown in Figure~\ref{fig:enratio}. 
\begin{figure}
\center
\includegraphics[angle=-90,width=0.6\textwidth]{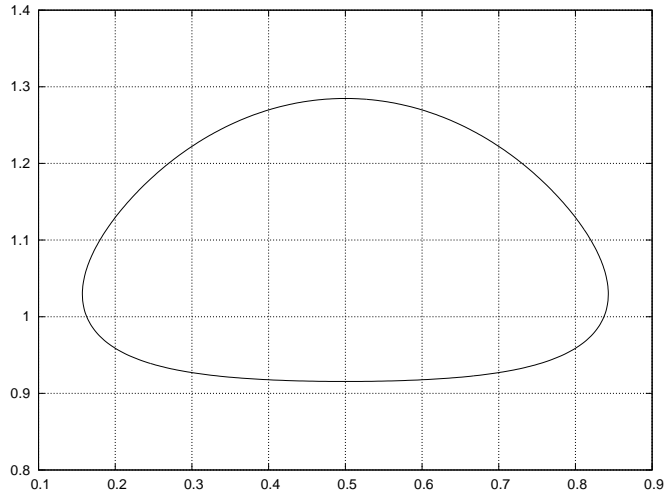}
\caption{(2\,adapt$_{9,4}^{(1,0)}$)
The final bubble for the test case 1 at time $T=3$.}
\label{fig:bubble}
\end{figure}%
\begin{figure}
\center
\includegraphics[angle=-90,width=0.45\textwidth]{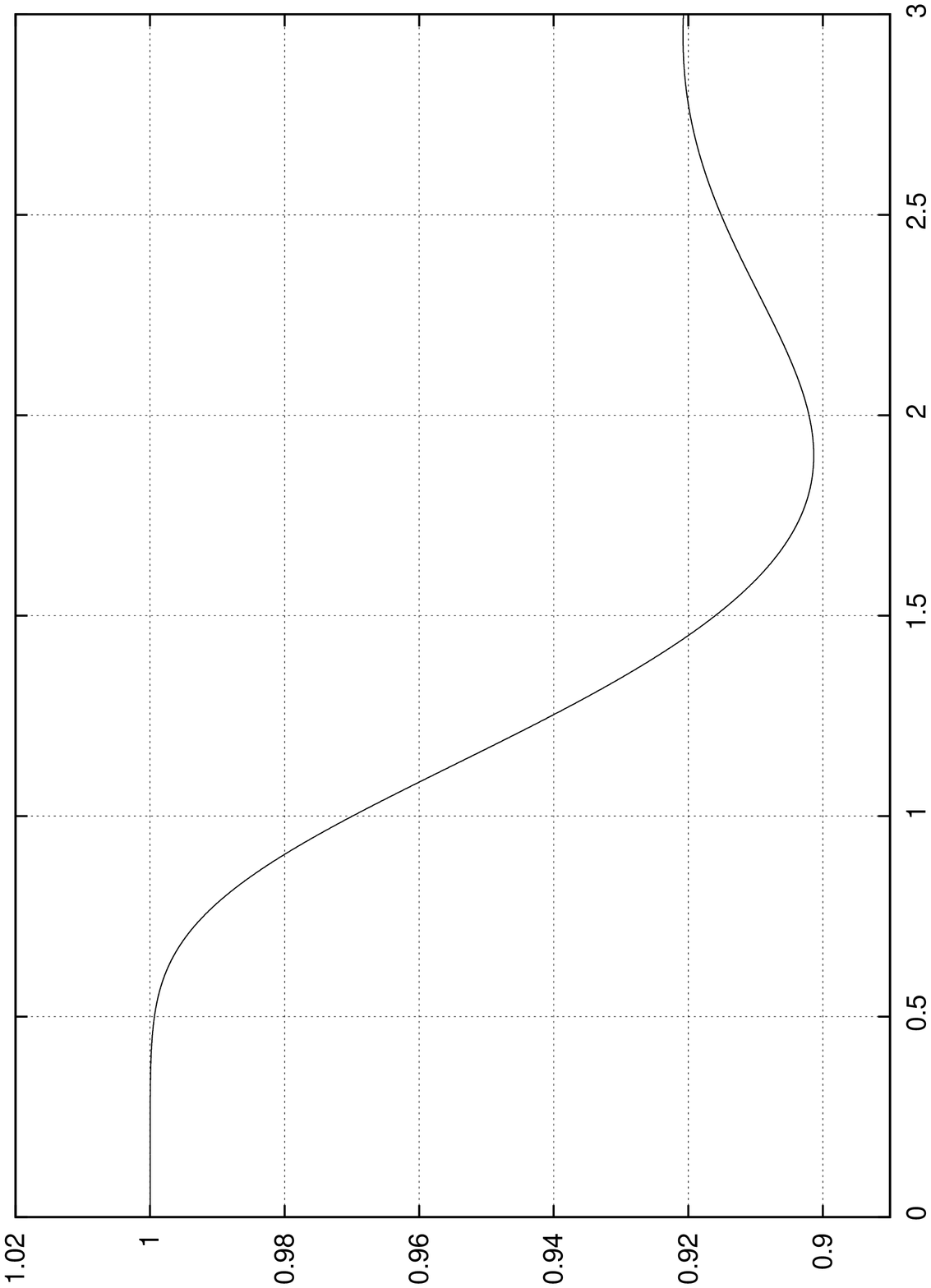}
\includegraphics[angle=-90,width=0.45\textwidth]{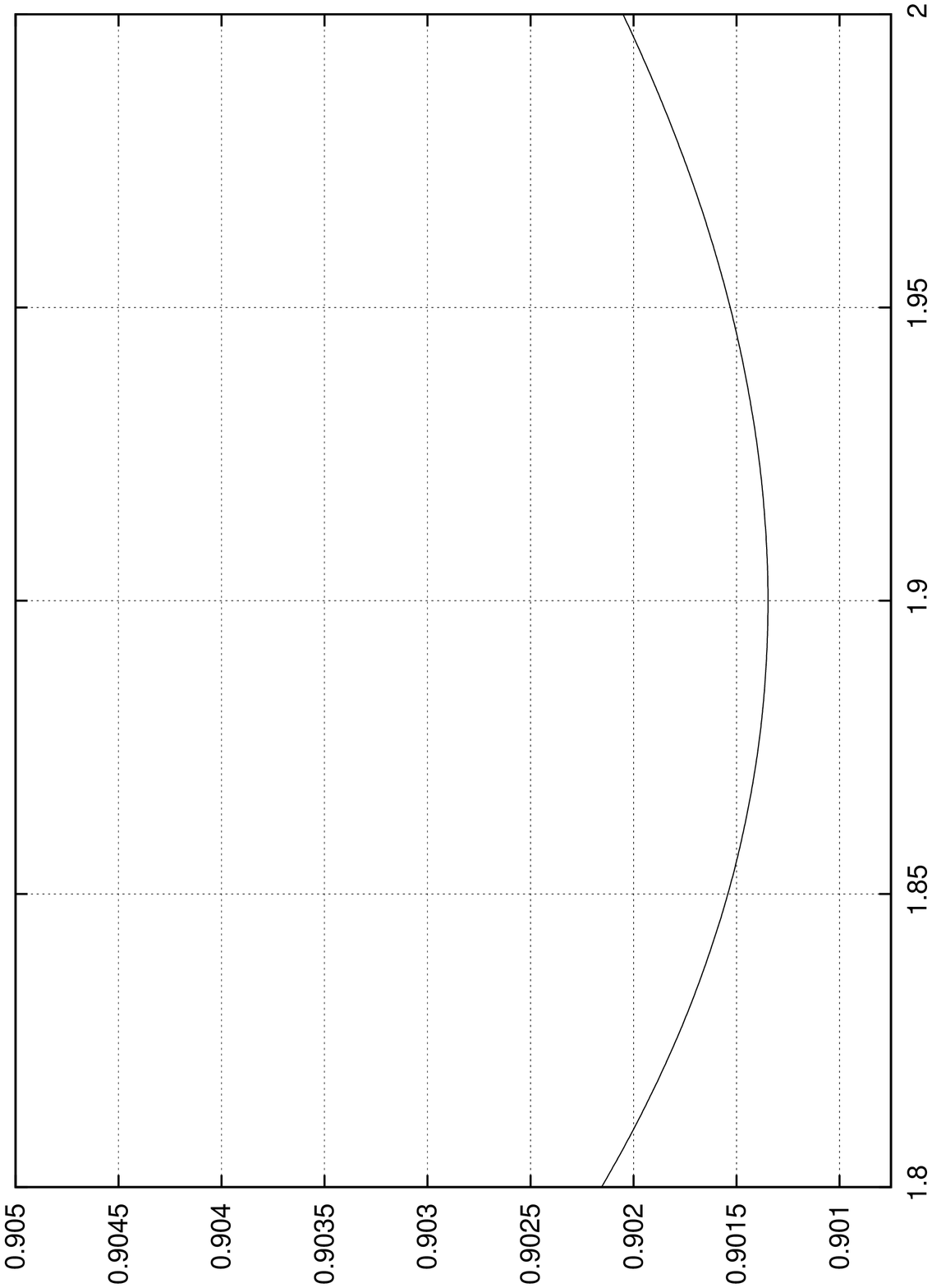}
\caption{(2\,adapt$_{9,4}^{(1,0)}$)
Circularity for the test case 1.}
\label{fig:circularity}
\end{figure}%
\begin{figure}
\center
\includegraphics[angle=-90,width=0.45\textwidth]{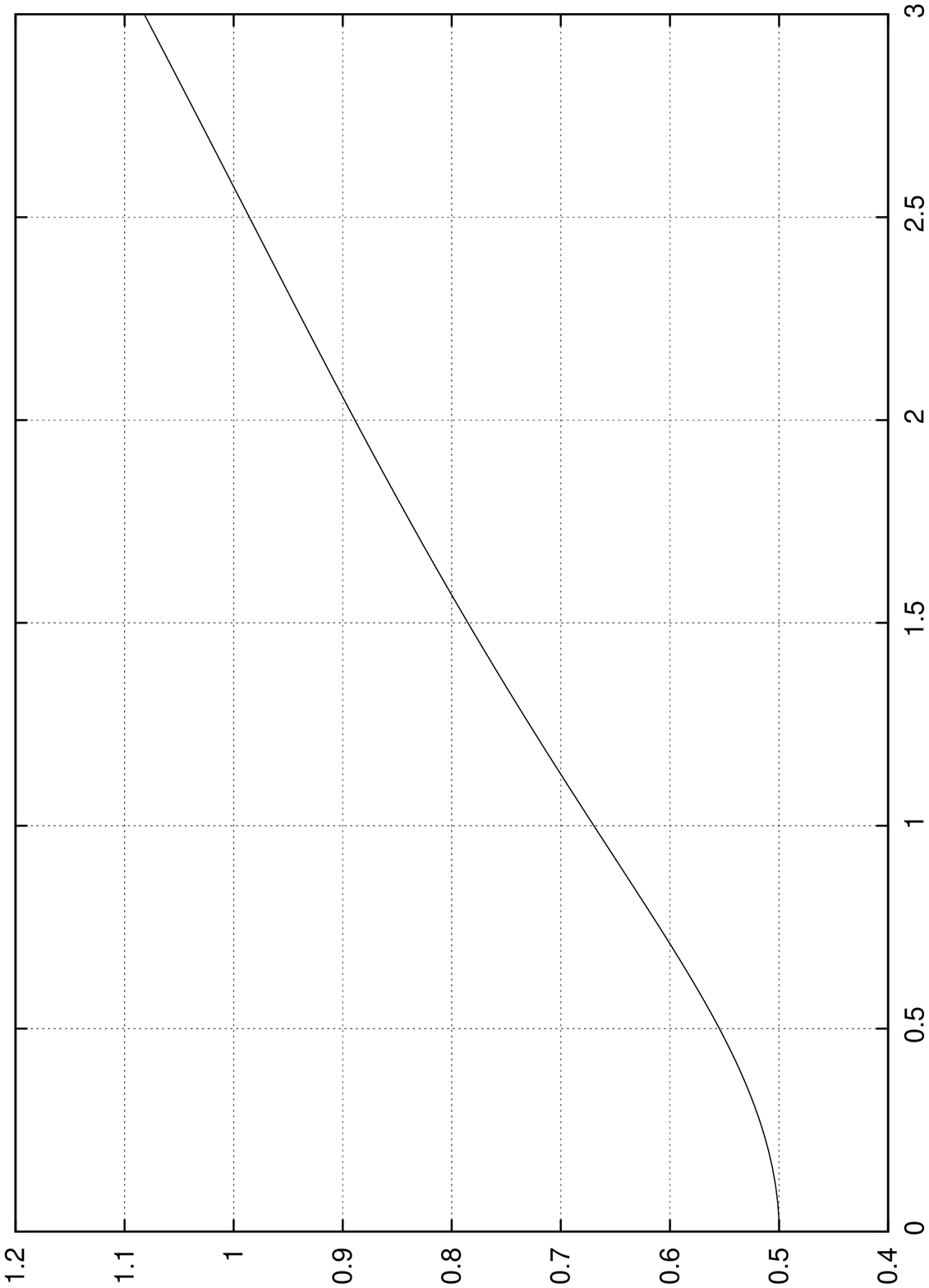}
\includegraphics[angle=-90,width=0.45\textwidth]{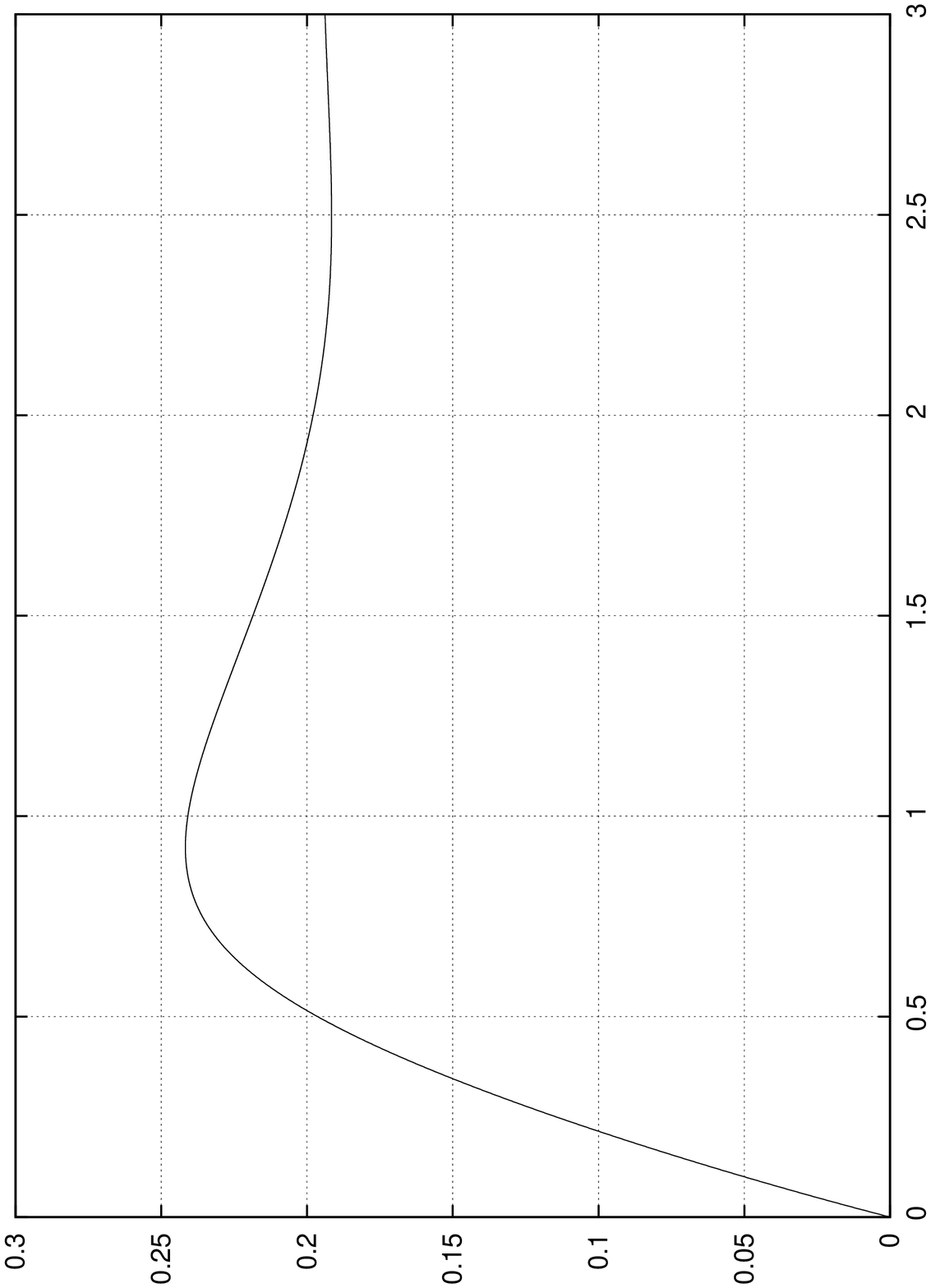}
\caption{(2\,adapt$_{9,4}^{(1,0)}$)
Centre of mass and rise velocity for the test case 1.}
\label{fig:comrise}
\end{figure}%
\begin{figure}
\center
\includegraphics[angle=-90,width=0.45\textwidth]{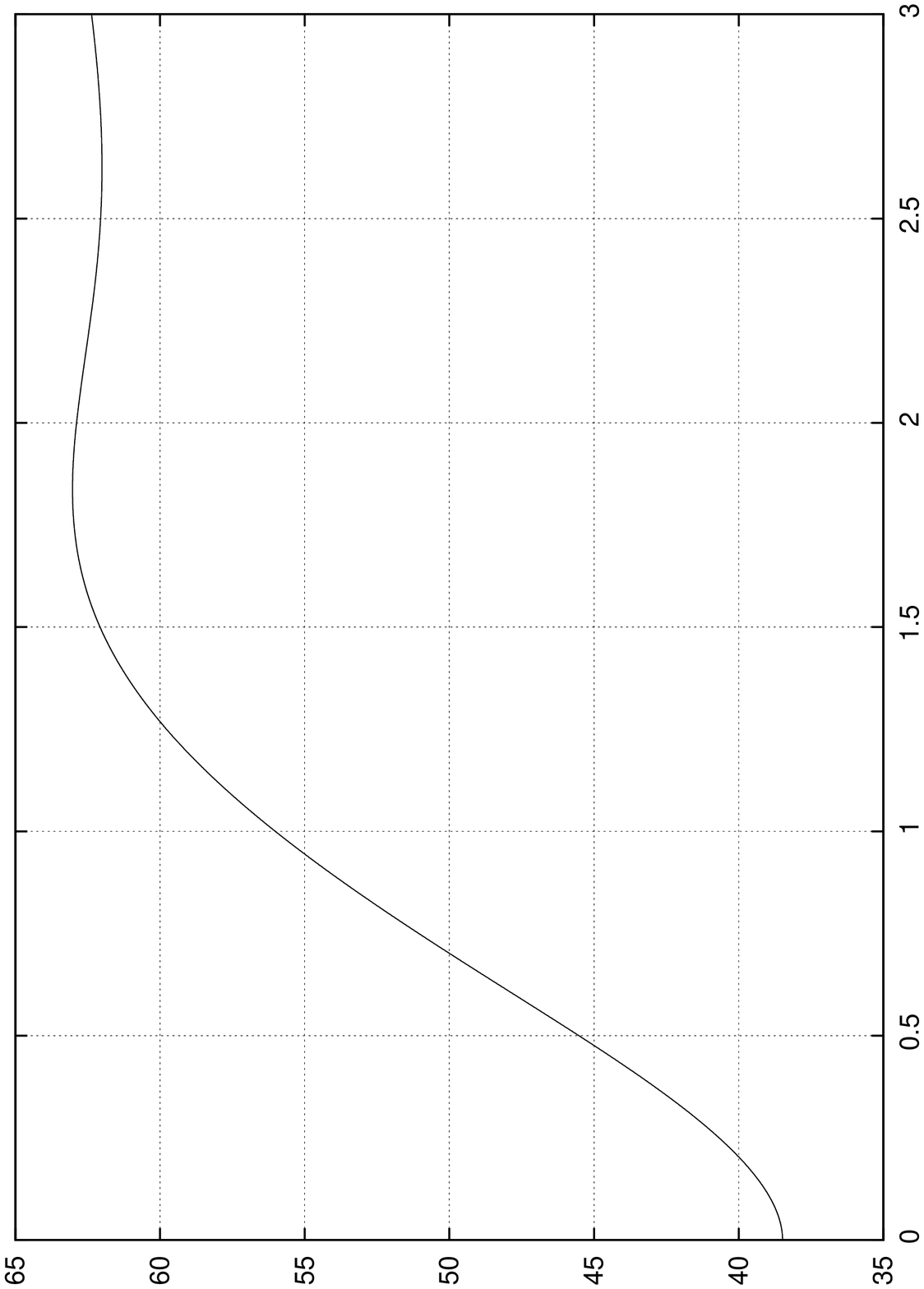}
\includegraphics[angle=-90,width=0.45\textwidth]{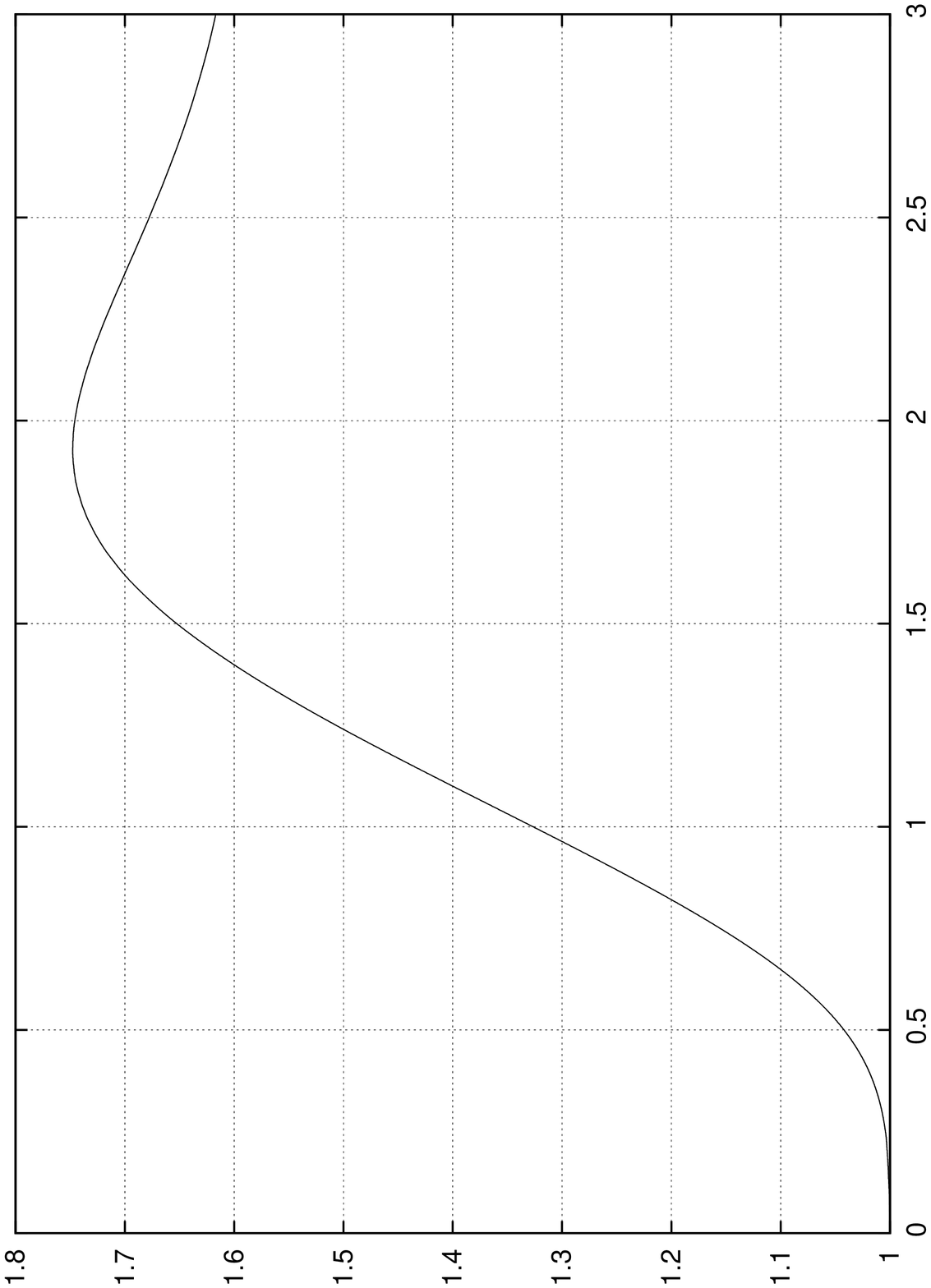}
\caption{(2\,adapt$_{9,4}^{(1,0)}$)
Discrete energy $\mathcal{E}(\rho^m, \vec U^{m+1}, \Gamma^{m+1})$ 
and the mesh quality $r^m$ for the test case 1.}
\label{fig:enratio}
\end{figure}%

\subsubsection{2d benchmark problem 2} \label{sec:614}
We use the same setup as in Section~\ref{sec:613}, and take the physical 
parameters from the test case 2 in \cite[Table~I]{HysingTKPBGT09}, which are
given by
\begin{equation} \label{eq:Hysing2}
\rho_+ = 1000\,,\quad \rho_- = 1\,,\quad \mu_+ = 10\,,\quad \mu_- = 0.1\,,\quad
\gamma = 1.96\,,\quad \vec f_1 = -0.98\,\vec\ek_d\,,\quad \vec f_2 = \vec 0\,.
\end{equation}
The time interval chosen for the simulation is $[0,T]$ with $T=3$, as before.

Some discretization parameters and CPU times for (\ref{eq:HGa}--d) 
are shown in Table~\ref{tab:CPU2}.
\begin{table}
\center
\begin{tabular}{l|r|r|r|r|r|r}
\hline
& $K^0_\Gamma$ & $J^0_\Omega$ & NDOF$_{\rm bulk}$ & $M$ & CPU &CPU \XFEMGAMMA\\
\hline 
adapt$_{5,2}^{(1)}$   &   32 &   536 &   2475 & 3000 &  195  &   277  \\
adapt$_{7,3}^{(1)}$   &  128 &  2320 &  10563 & 3000 &  2600 & 11270 \\
2\,adapt$_{9,4}^{(1)}$ & 512 &  9728 &  44019 & 6000 & 33000 & 76200 \\ 
\hline
adapt$_{5,2}^{(1,0)}$   &   32 &   536 &   3011 & 3000 &    770 &   2520 \\
adapt$_{7,3}^{(1,0)}$   &  128 &  2320 &  12883 & 3000 &  13600 &  53600 \\
2\,adapt$_{9,4}^{(1,0)}$ & 512 &  9728 &  53747 & 6000 & 517900 & 611400 \\
\hline
\end{tabular}
\caption{Simulation statistics and timings for the test case 2 in 
\cite{HysingTKPBGT09}.}
\label{tab:CPU2}
\end{table}%
Selected benchmark quantities are shown in Tables~\ref{tab:dataa2}
and \ref{tab:data10a2} for the elements (\ref{eq:P2P1}) and (\ref{eq:P2P10}),
respectively.
We observe that the results in these two tables
are in good agreement with the corresponding numbers from the finest
discretization run of group 3 in \cite{HysingTKPBGT09}, which are given by
0.5144, 3.0000, 0.2502, 0.7317, 0.2393, 2.0600 and 1.1376. 
Here we note that there is little agreement on these results between the three
groups in \cite{HysingTKPBGT09}, but we believe the numbers of
group 3 to be the most reliable ones.
\begin{table}
\center
\begin{tabular}{l|r|r|r}
\hline
& adapt$_{5,2}^{(1)}$ & adapt$_{7,3}^{(1)}$ & 
2\,adapt$_{9,4}^{(1)}$ \\
\hline 
$\Mloss$ & 
  0.0\% &  0.0\% &  0.0\% \\
$\strikec_{\min}$ & 
 0.5892 & 0.5192 & 0.5167 \\
$t_{\strikec = \strikec_{\min}}$ & 
 3.0000 & 3.0000 & 3.0000 \\
$V_{c,\max1}$ & 
 0.2584 & 0.2480 & 0.2488 \\
$t_{V_c = V_{c,\max1}}$ & 
 0.8800 & 0.7610 & 0.7310 \\
$V_{c,\max2}$ &
 0.2283 & 0.2305 & 0.2356 \\
$t_{V_c = V_{c,\max2}}$ & 
 2.0000 & 1.9500 & 2.0490 \\
$y_c(t=3)$ & 
 1.1275 & 1.1238 & 1.1319 \\
\hline
\end{tabular}
\begin{tabular}{l|r|r|r}
\hline
& adapt$_{5,2}^{(1)}$ & adapt$_{7,3}^{(1)}$ & 
2\,adapt$_{9,4}^{(1)}$ \\         
\hline 
$\Mloss$ & 
 -0.4\% &  0.0\% &  0.0\% \\
$\strikec_{\min}$ & 
 0.5671 & 0.5086 & 0.5077 \\
$t_{\strikec = \strikec_{\min}}$ & 
 3.0000 & 3.0000 & 3.0000 \\
$V_{c,\max1}$ & 
 0.2517 & 0.2507 & 0.2507 \\
$t_{V_c = V_{c,\max1}}$ & 
 0.7370 & 0.7270 & 0.7395 \\
$V_{c,\max2}$ & 
 0.2310 & 0.2376 & 0.2390 \\
$t_{V_c = V_{c,\max2}}$ & 
 1.9270 & 2.0250 & 2.0425 \\
$y_c(t=3)$ & 
 1.1162 & 1.1296 & 1.1350 \\
\hline
\end{tabular}
\caption{Some quantitative results for the test case 2 in 
\cite{HysingTKPBGT09}. Here we use the P2-P1 element (\ref{eq:P2P1}) with 
(\ref{eq:rhoma}) and \XFEMGAMMA. 
The bottom table is for (\ref{eq:rhomc}) and \XFEMGAMMA.
}
\label{tab:dataa2}
\end{table}%
\begin{table}
\center
\begin{tabular}{l|r|r|r}
\hline
& adapt$_{5,2}^{(1,0)}$ & adapt$_{7,3}^{(1,0)}$ & 
2\,adapt$_{9,4}^{(1,0)}$ \\
\hline 
$\Mloss$ & 
 -0.2\% &  0.0\% &  0.0\% \\
$\strikec_{\min}$ & 
 0.5701 & 0.5172 & 0.5131 \\
$t_{\strikec = \strikec_{\min}}$ & 
 3.0000 & 3.0000 & 3.0000 \\
$V_{c,\max1}$ & 
 0.2580 & 0.2480 & 0.2489 \\
$t_{V_c = V_{c,\max1}}$ & 
 0.8790 & 0.7620 & 0.7295 \\
$V_{c,\max2}$ & 
 0.2295 & 0.2311 & 0.2356 \\
$t_{V_c = V_{c,\max2}}$ & 
 1.9640 & 1.9400 & 2.0445 \\
$y_c(t=3)$ & 
 1.1226 & 1.1234 & 1.1318 \\
\hline
\end{tabular}
\begin{tabular}{l|r|r|r}
\hline
& adapt$_{5,2}^{(1,0)}$ & adapt$_{7,3}^{(1,0)}$ & 
2\,adapt$_{9,4}^{(1,0)}$ \\   
\hline 
$\Mloss$ & 
 -0.4\% &  0.0\% &  0.0\% \\
$\strikec_{\min}$ & 
 0.5542 & 0.5055 & 0.5067 \\
$t_{\strikec = \strikec_{\min}}$ & 
 3.0000 & 3.0000 & 3.0000 \\
$V_{c,\max1}$ & 
 0.2503 & 0.2507 & 0.2503 \\
$t_{V_c = V_{c,\max1}}$ & 
 0.7150 & 0.7150 & 0.7155 \\
$V_{c,\max2}$ & 
 0.2300 & 0.2372 & 0.2388 \\
$t_{V_c = V_{c,\max2}}$ & 
 1.9327 & 2.0290 & 2.0425 \\
$y_c(t=3)$ & 
 1.1224 & 1.1316 & 1.1356 \\
\hline
\end{tabular}
\caption{Some quantitative results for the test case 2 in 
\cite{HysingTKPBGT09}. Here we use the P2-(P1+P0) element (\ref{eq:P2P10}) with 
(\ref{eq:rhoma}) and \XFEMGAMMA. 
The bottom table is for (\ref{eq:rhomc}) and \XFEMGAMMA.
}
\label{tab:data10a2}
\end{table}%

We again visualize the numerical results for our simulation with the
finest discretization parameters for the 
P2-(P1+P0) element and the strategy (\ref{eq:rhomc}).
A plot of $\Gamma^M$ can be seen in Figure~\ref{fig:bubble2}, 
where we observe that no self intersections have occured, in line with the
results of group 3 in \cite{HysingTKPBGT09}.
Some quantative statistics are shown in Figures~\ref{fig:circularity2} and
\ref{fig:comrise2}. 
Plots of the discrete energy 
and of the mesh quality of $\Gamma^m$ 
are shown in Figure~\ref{fig:enratio2}. Here the discontinuities in 
$r^m$, recall (\ref{eq:r}), are caused by the local refinement of $\Gamma^m$
as described in \S\ref{sec:52}.
For the displayed run
we start with $J^0_\Gamma = 512$ elements and finish with $J^M_\Gamma = 662$
elements.

\begin{figure}
\center
\includegraphics[angle=-90,width=0.6\textwidth]{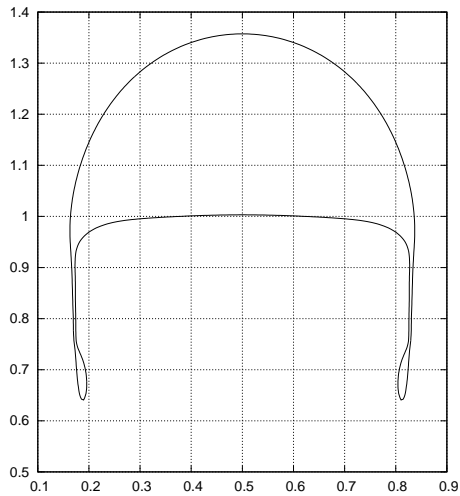}
\caption{(2\,adapt$_{9,4}^{(1,0)}$)
The final bubble for the test case 2 at time $T=3$.}
\label{fig:bubble2}
\end{figure}%
\begin{figure}
\center
\includegraphics[angle=-90,width=0.45\textwidth]{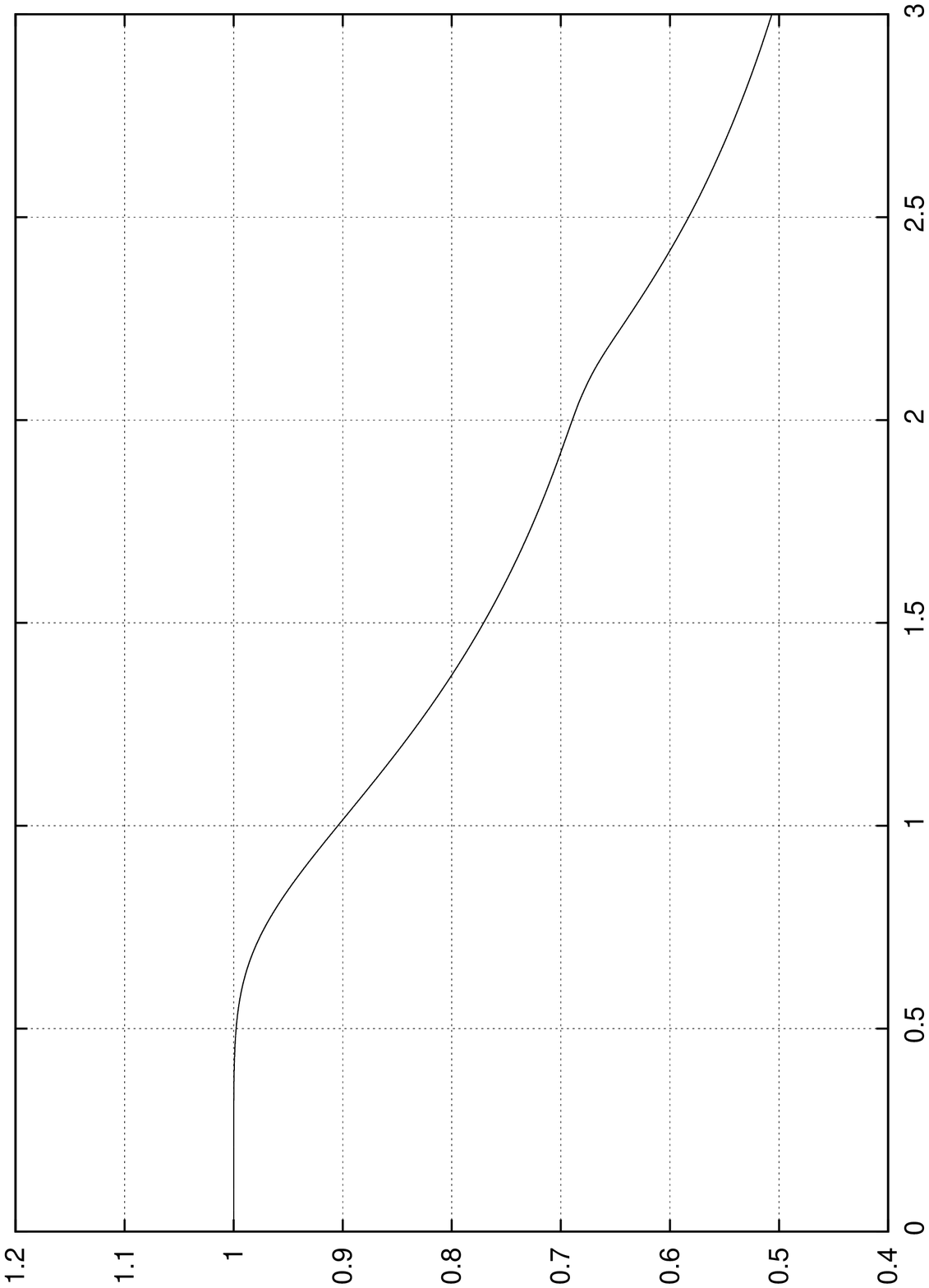}
\includegraphics[angle=-90,width=0.45\textwidth]{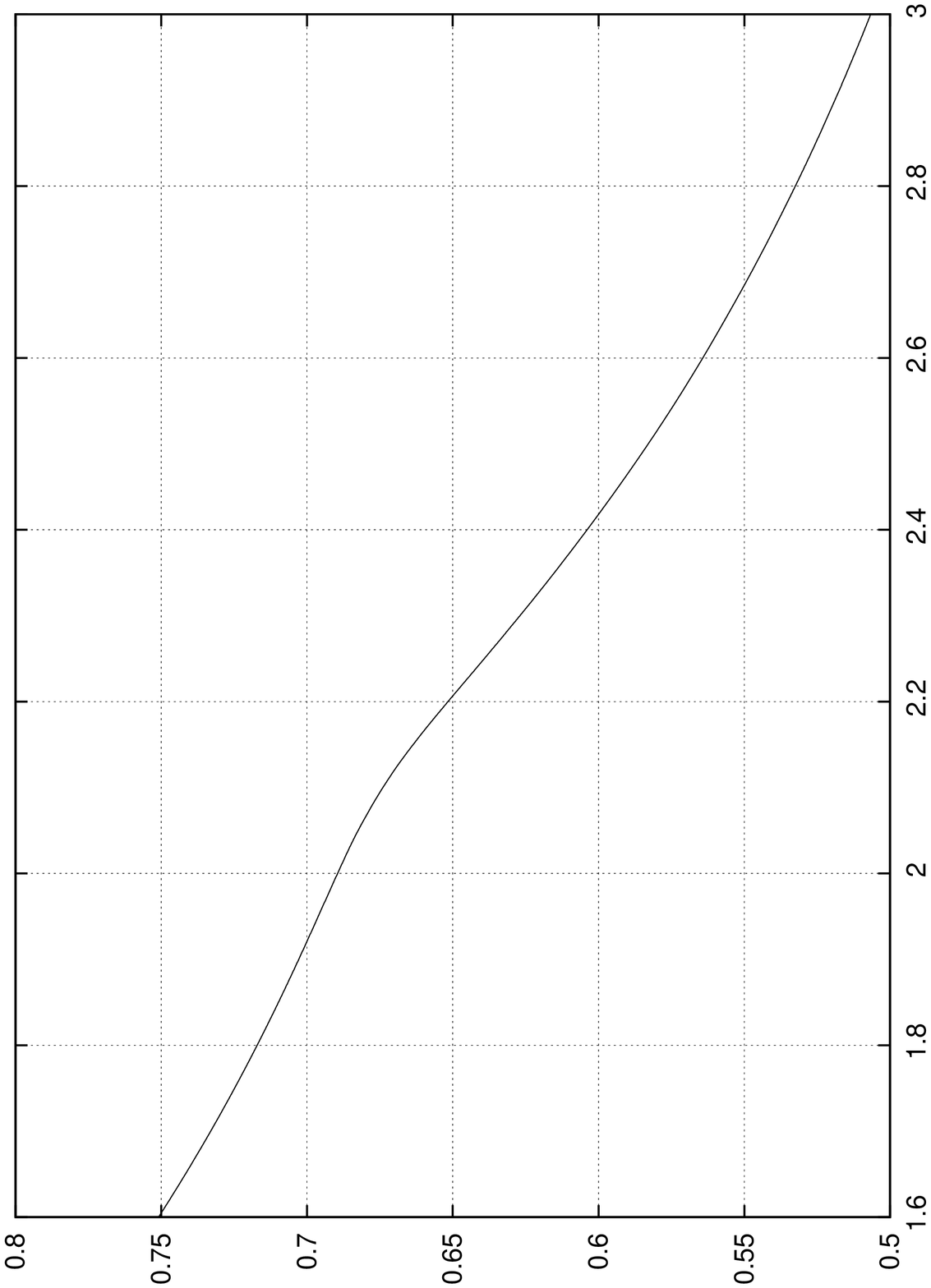}
\caption{(2\,adapt$_{9,4}^{(1,0)}$)
Circularity for the test case 2.}
\label{fig:circularity2}
\end{figure}%
\begin{figure}
\center
\includegraphics[angle=-90,width=0.45\textwidth]{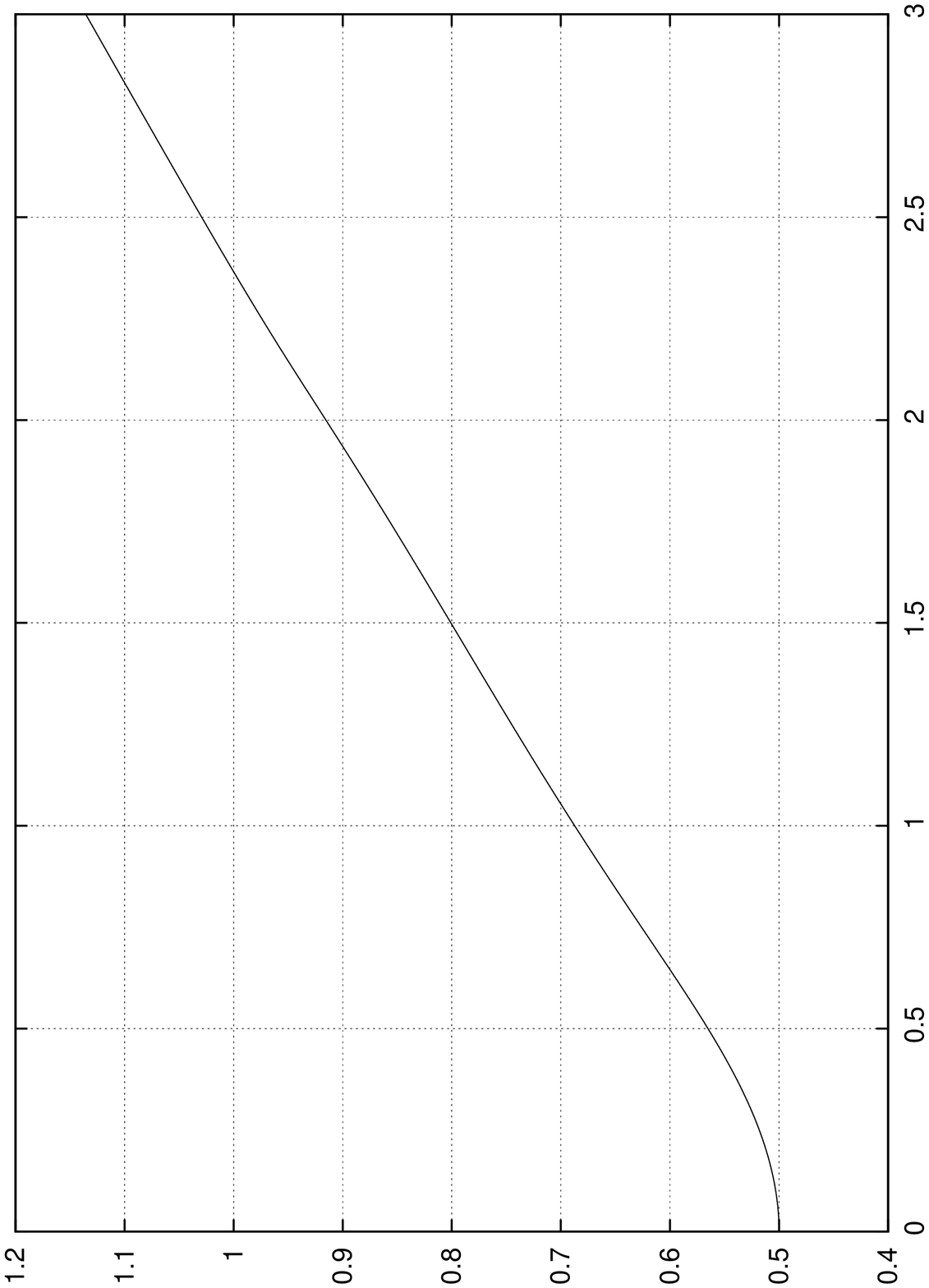}
\includegraphics[angle=-90,width=0.45\textwidth]{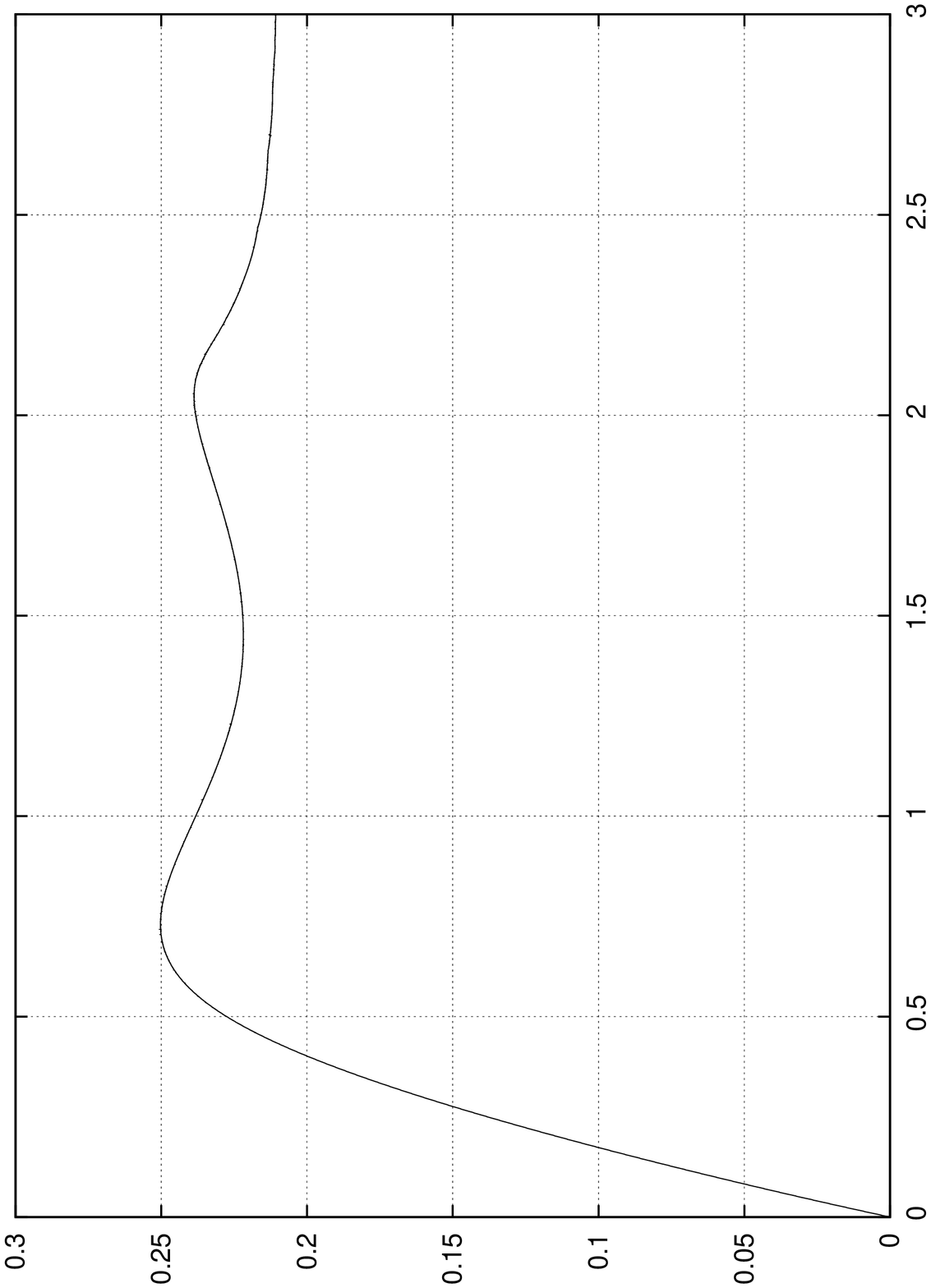}
\caption{(2\,adapt$_{9,4}^{(1,0)}$)
Centre of mass and rise velocity for the test case 2.}
\label{fig:comrise2}
\end{figure}%
\begin{figure}
\center
\includegraphics[angle=-90,width=0.45\textwidth]{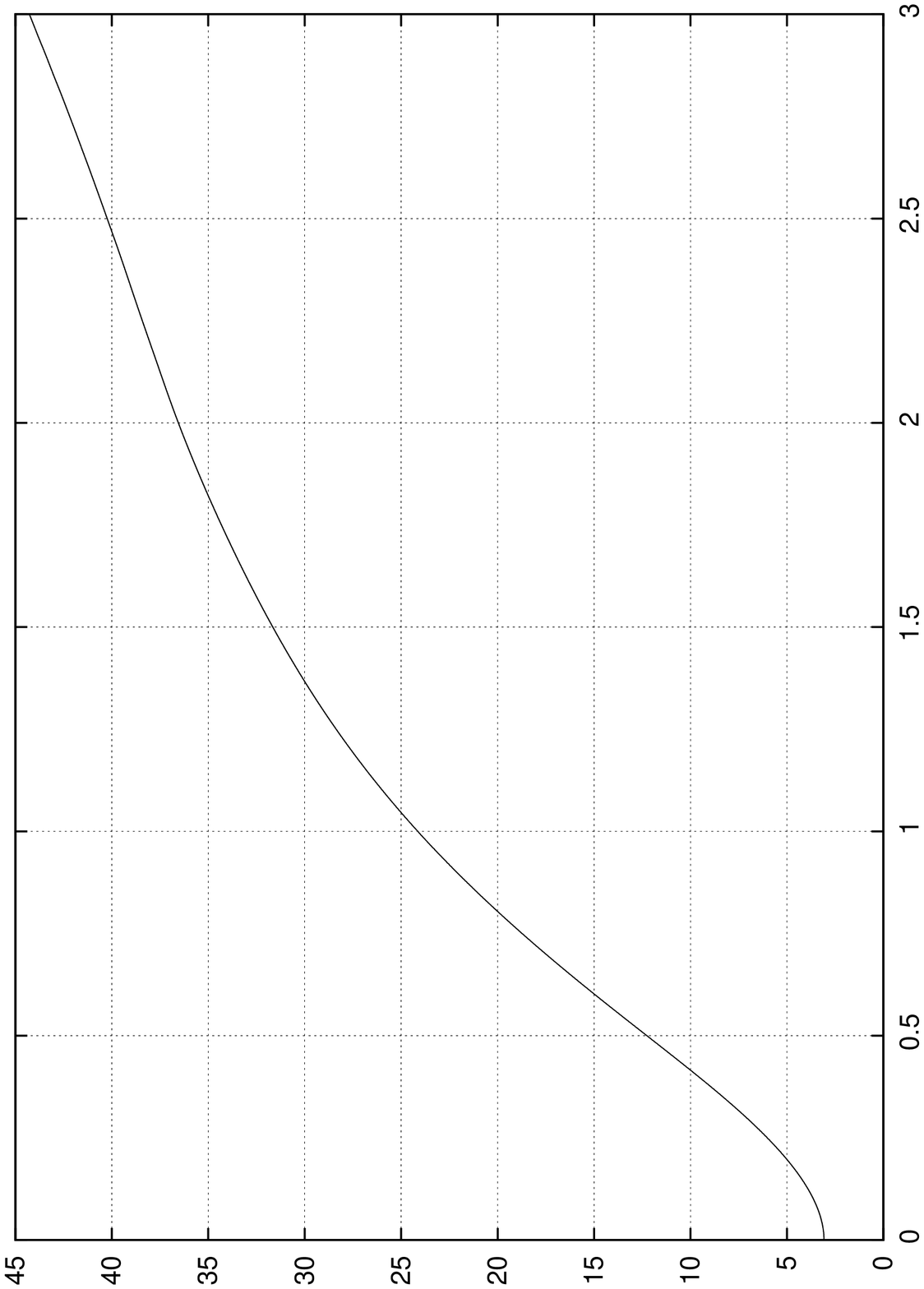}
\includegraphics[angle=-90,width=0.45\textwidth]{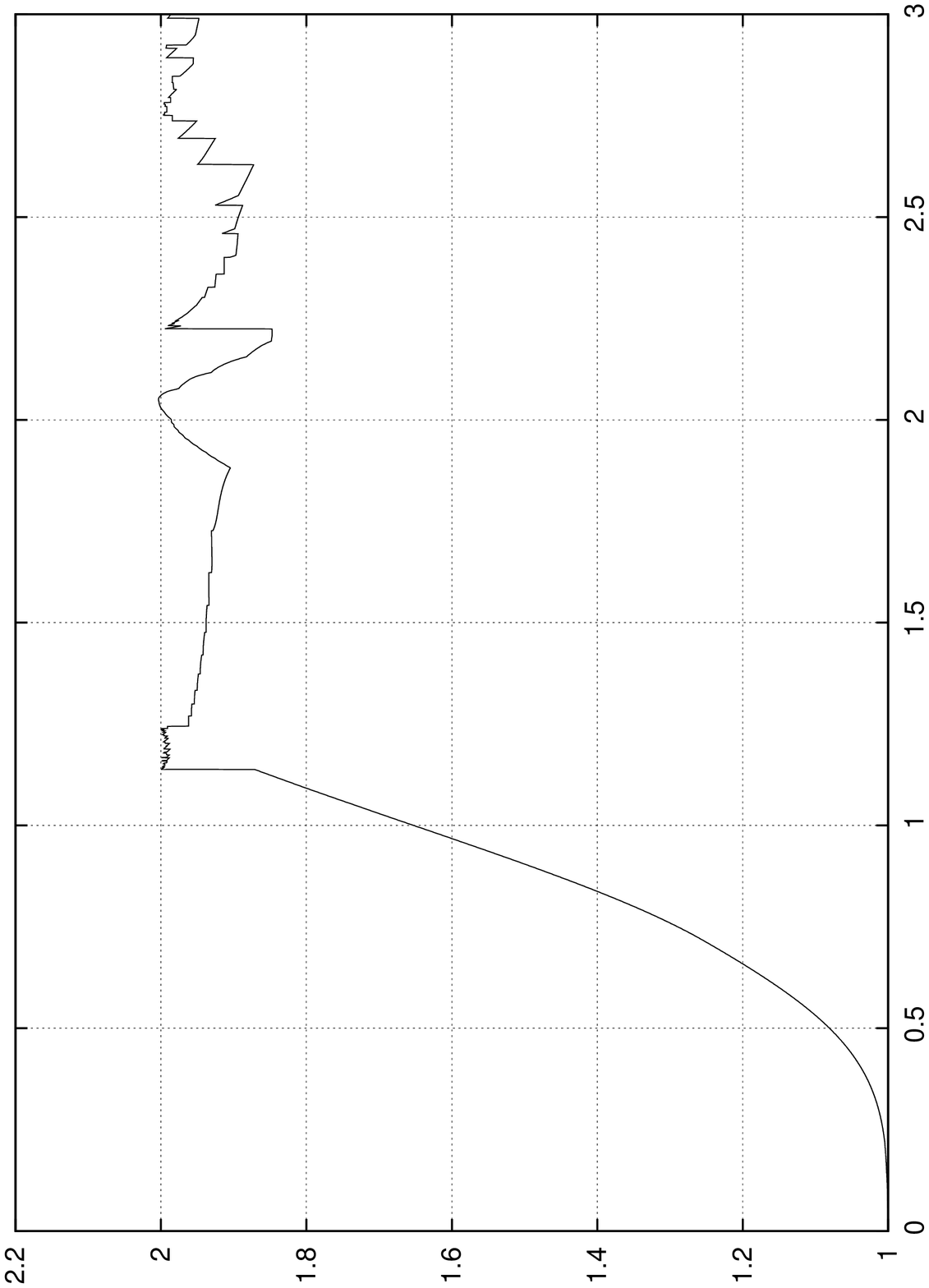}
\caption{(2\,adapt$_{9,4}^{(1,0)}$)
Discrete energy $\mathcal{E}(\rho^m, \vec U^{m+1}, \Gamma^{m+1})$ 
and the mesh quality $r^m$ for the test case 2.}
\label{fig:enratio2}
\end{figure}%

\subsection{Numerical results in 3d} \label{sec:62}
In this subsection we present numerical results for our approximation
(\ref{eq:HGa}--d) for the case $d = 3$. In particular, we will present
benchmark computations for the natural 3d analogues of the two-dimensional test
cases proposed in \cite[Table~I]{HysingTKPBGT09}. Moreover, we will present
some rising droplet simulations that are based on real physical parameters
suggested in \cite{GrossR11}.

For the 3d benchmark computations, we introduce the natural extensions of the 
quantities defined in (\ref{eq:benchmarkm}). That is,
the discrete approximations of the
$x_3$-component of the bubble's centre of mass, the ``degree of sphericity''
and the bubble's rise velocity are defined by
\begin{align*}
z_c^m & = \frac1{\mathcal{L}^3(\Omega_-^m)}\,\int_{\Omega_-^m} x_3 \dL3
= \frac3{\int_{\Gamma^m} \vec X^m\,.\,\vec \nu^m \dH{2}}
\int_{\Gamma^m} \tfrac12\,(\vec X^m \,.\,\vec\ek_3)^2\,
(\vec\nu^m \,.\,\vec\ek_3) \dH{2} \,,
\quad \nonumber \\ 
\strikes^m & = \pi^\frac13\,[6\,\mathcal{L}^3(\Omega_-^m)]^\frac23\,
[\mathcal{H}^{2}(\Gamma^m)]^{-1}\,, \quad 
V^m_c = \frac{(\rho^m_-\,U^m_3, 1)}{(\rho^m_-,1)}\,.
\end{align*}

\subsubsection{3d benchmark problem 1} \label{sec:621}
Here we consider the natural 3d analogue of the problem in \S\ref{sec:613},
i.e.\ of test case 1 in \cite{HysingTKPBGT09}, where only benchmark problems 
in two space dimensions are presented. To this end, we let $\Omega =
(0,1) \times (0,1) \times (0.2)$ with 
$\partial_1\Omega = [0,1] \times [0,1] \times \{0,2\}$ and 
$\partial_2\Omega = \partial\Omega \setminus \partial_1\Omega$.
Moreover, we set $T=3$, $\Gamma_0 = \{ \vec z \in \R^3 : |\vec z -
(\frac12, \frac12, \frac12)^T| = \frac14\}$, and choose the physical 
parameters as in (\ref{eq:Hysing1}). 

Our discretization parameters and CPU times are shown in Table~\ref{tab:3dCPU}.
The quantitative values for the evolution are given in Table~\ref{tab:3ddataa}.
\begin{table}
\center
\begin{tabular}{l|r|r|r|r|r}
\hline
& $K^0_\Gamma$ & $J^0_\Omega$ & NDOF$_{\rm bulk}$ & $M$ & CPU \XFEMGAMMA\\
\hline 
adapt$_{5,2}^{(1)}$  &  770 &  22320 &   95542 & 3000 & 68500 \\ 
adapt$_{6,3}^{(1)}$  & 1538 &  89616 &  383206 & 3000 & 456600 \\ 
\hline
\end{tabular}
\caption{Simulation statistics and timings for the 3d benchmark problem 1.} 
\label{tab:3dCPU}
\end{table}%
\begin{table}
\center
\begin{tabular}{l|r|r}
\hline
& adapt$_{5,2}^{(1)}$ & adapt$_{6,3}^{(1)}$ \\
\hline 
$\Mloss$ & 
  0.0\% & 0.0\% \\
$\strikes_{\min}$ & 
 0.9570 & 0.9508 \\
$t_{\strikes = \strikes_{\min}}$ & 
 3.0000 & 3.0000 \\
$V_{c,\max}$ & 
 0.3823 & 0.3845 \\
$t_{V_c = V_{c,\max}}$ & 
 1.1930 & 1.0800 \\
$z_c(t=3)$ & 
 1.5516 & 1.5555 \\
\hline
\end{tabular}
\caption{Some quantitative results for the 3d benchmark problem 1.}
\label{tab:3ddataa}
\end{table}%
In what follows we
present some visualizations of the numerical results for the run with
adapt$_{6,3}^{(1)}$.
Plots of $\Gamma^M$ can be seen in Figure~\ref{fig:3dbubble}, while the time
evolution of the sphericity, the centre of mass and the rise velocity
are shown in Figures~\ref{fig:sphericity} and \ref{fig:3dcomrise}. 
\begin{figure}
\center
\hspace*{-2.1cm}
\includegraphics[angle=-90,width=0.5\textwidth]{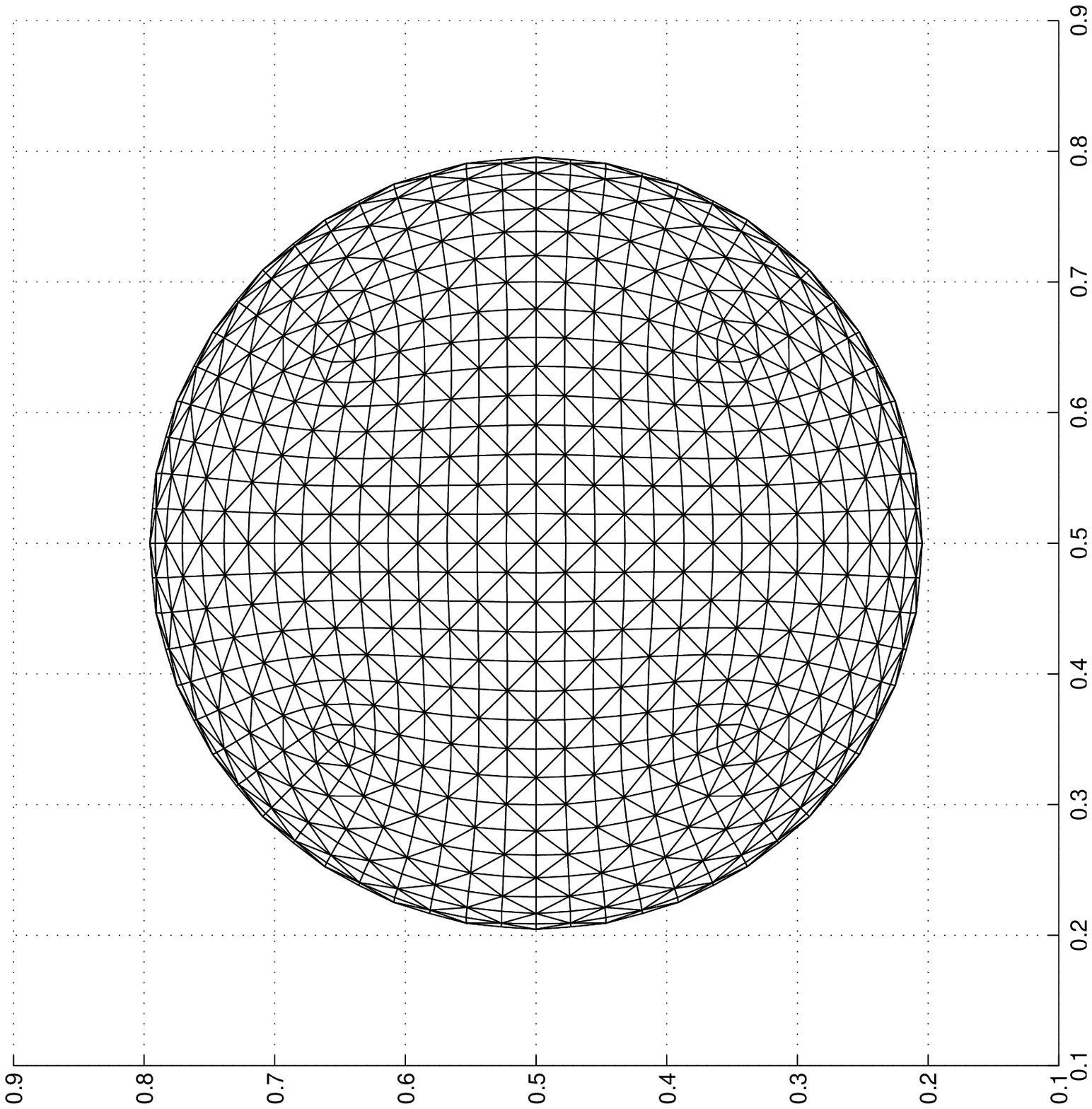}
\includegraphics[angle=-90,width=0.5\textwidth]{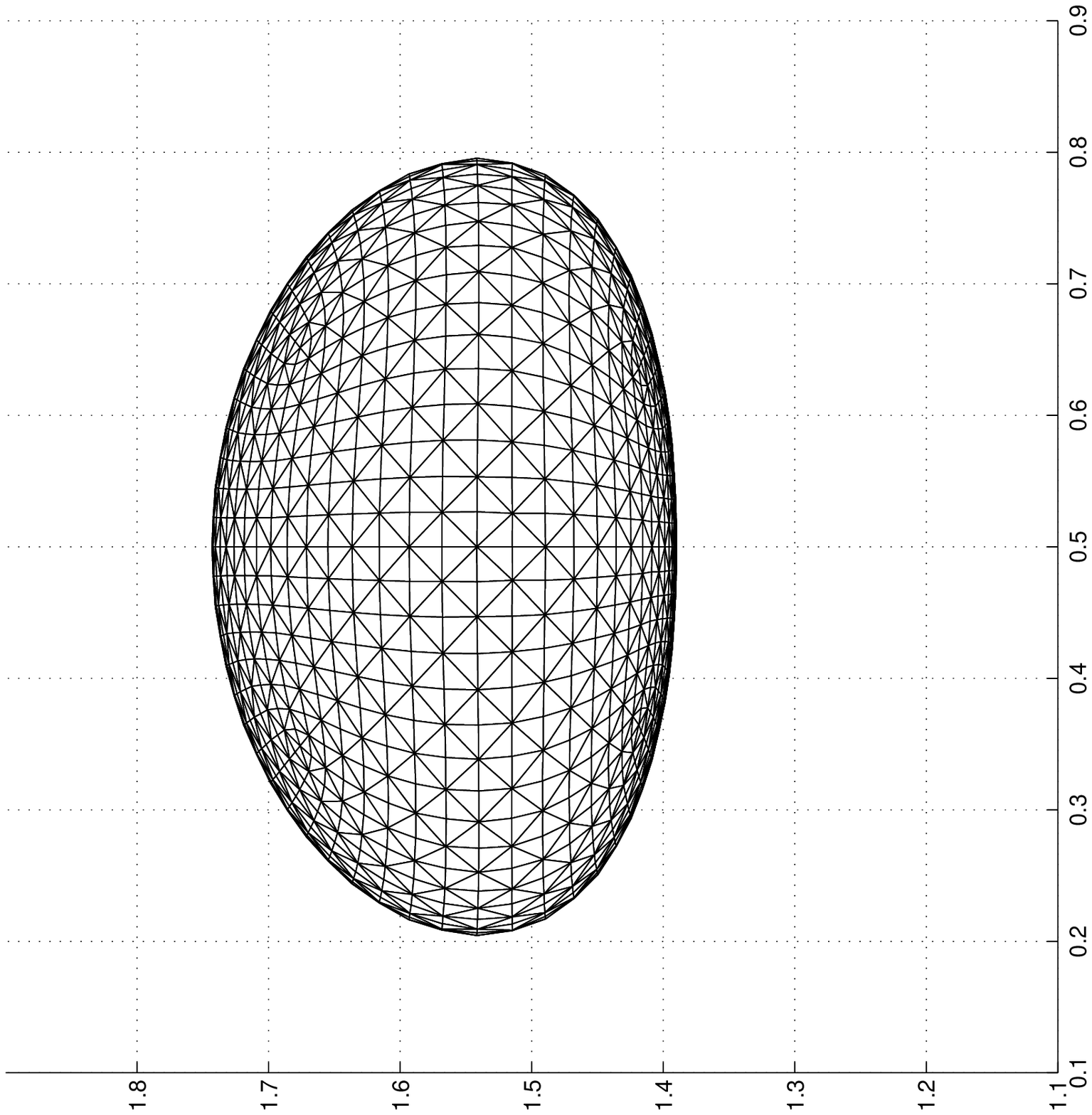}
\caption{(adapt$_{6,3}^{(1)}$)
The final bubble for the 3d benchmark problem 1 at time $T=3$. 
Views from the top (left) and from the front (right).}
\label{fig:3dbubble}
\end{figure}%
\begin{figure}
\center
\includegraphics[angle=-90,width=0.45\textwidth]{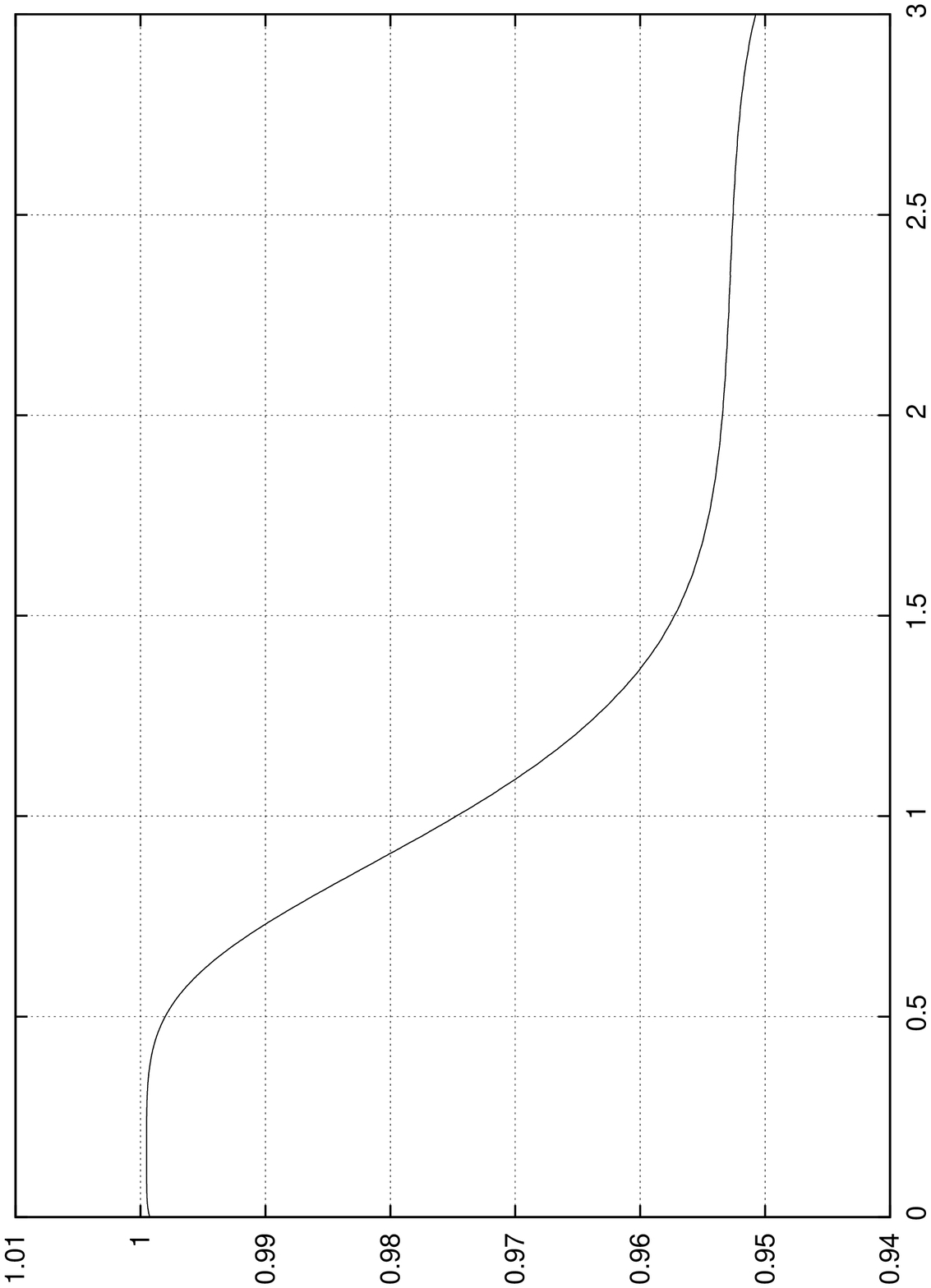}
\caption{(adapt$_{6,3}^{(1)}$)
Sphericity for the 3d benchmark problem 1.}
\label{fig:sphericity}
\end{figure}%
\begin{figure}
\center
\includegraphics[angle=-90,width=0.45\textwidth]{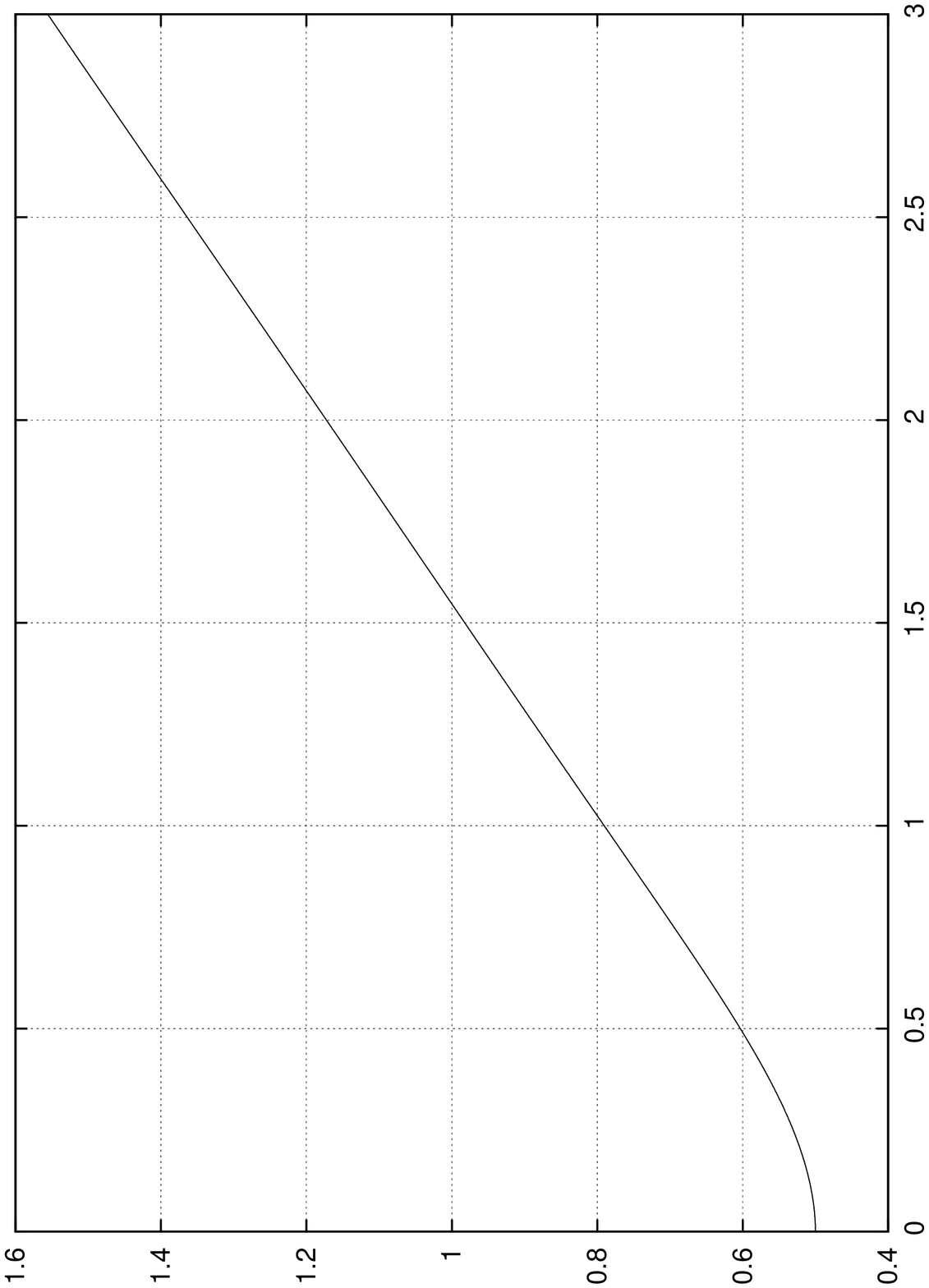}
\includegraphics[angle=-90,width=0.45\textwidth]{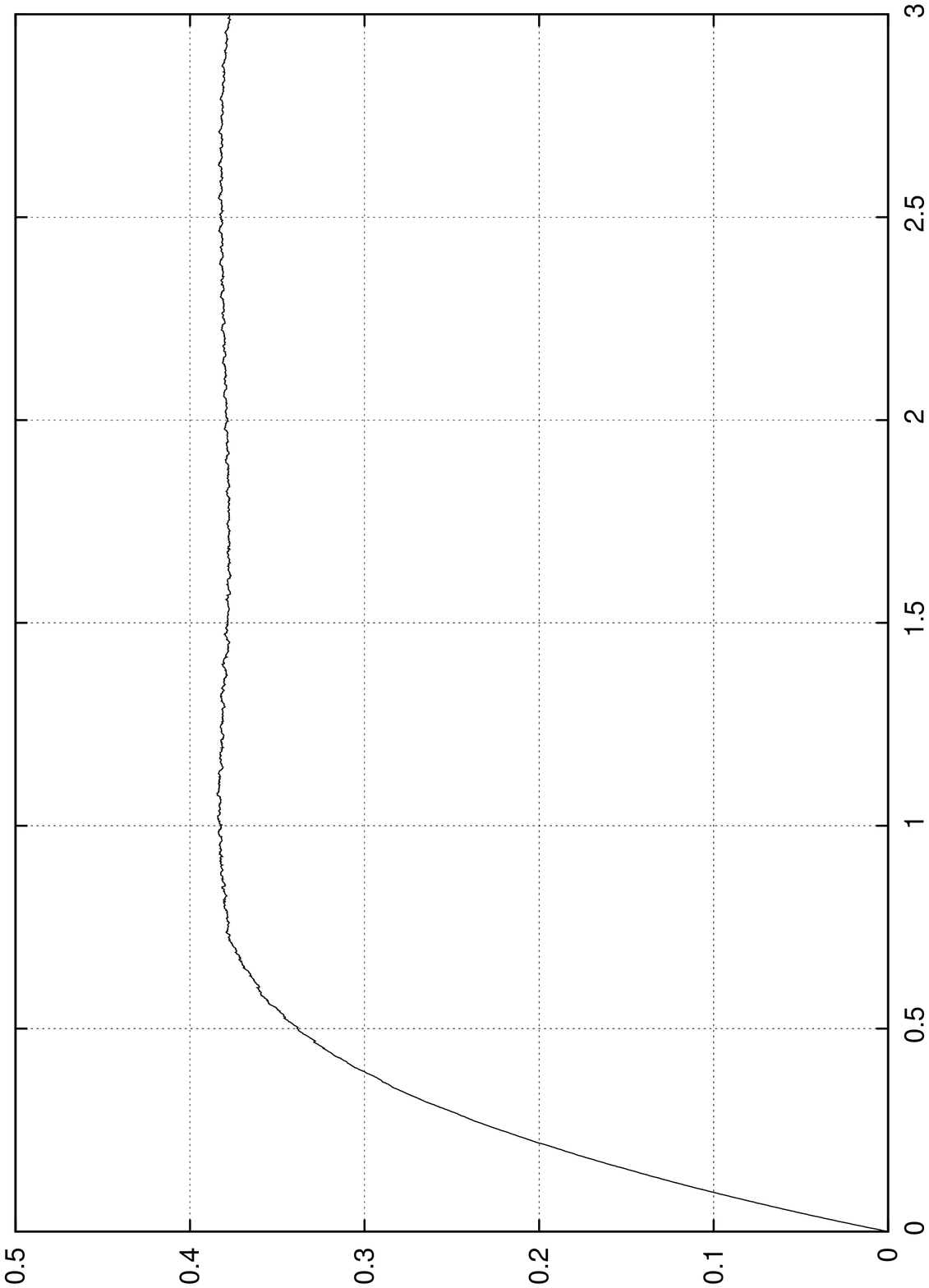}
\caption{(adapt$_{6,3}^{(1)}$)
Centre of mass and rise velocity for the 3d benchmark problem 1.}
\label{fig:3dcomrise}
\end{figure}%

\subsubsection{3d benchmark problem 2} \label{sec:622}
We use the same setup as in Section~\ref{sec:621}, and take the physical 
parameters as in (\ref{eq:Hysing2}).
The time interval chosen for the simulation is $[0,T]$ with $T=1.5$.
Some discretization parameters and CPU times for (\ref{eq:HGa}--d) 
are shown in Table~\ref{tab:3dCPU2_t15}.
The quantitative values for the evolution are given in 
Table~\ref{tab:3ddataa2_t15}.
\begin{table}
\center
\begin{tabular}{l|r|r|r|r|r}
\hline
& $K^0_\Gamma$ & $J^0_\Omega$ & NDOF$_{\rm bulk}$ & $M$ &CPU \XFEMGAMMA\\
\hline 
adapt$_{5,2}^{(1)}$  &  770 &  22320 &   95542 & 1500 & 231800\\ 
adapt$_{6,3}^{(1)}$  & 1538 &  89616 &  383206 & 1500 & 1494500 \\ 
\hline
\end{tabular}
\caption{Simulation statistics and timings for the 3d benchmark problem 2.}
\label{tab:3dCPU2_t15}
\end{table}%
\begin{table}
\center
\begin{tabular}{l|r|r}
\hline
& adapt$_{5,2}^{(1)}$ & adapt$_{6,3}^{(1)}$ \\
\hline 
$\Mloss$ & 
   0.0\% &  0.0\% \\
$\strikes_{\min}$ & 
  0.8144 & 0.7977 \\
$t_{\strikes = \strikes_{\min}}$ & 
  1.5000 & 1.5000 \\
$V_{c,\max}$ & 
  0.3819 & 0.3862 \\
$t_{V_c = V_{c,\max}}$ & 
  0.5670 & 0.5860 \\
$z_c(t=3)$ & 
  0.9831 & 0.9867 \\
\hline
\end{tabular}
\caption{Some quantitative results for the 3d benchmark problem 2.}
\label{tab:3ddataa2_t15}
\end{table}%
We again visualize the numerical results for our simulation with the
parameters adapt$_{6,3}^{(1)}$.
Plots of $\Gamma^M$ can be seen in Figure~\ref{fig:3dbubble2_t15}, where we
note the very nonconvex shape of the final bubble.
Some quantative statistics are shown in Figures~\ref{fig:sphericity2_t15} 
and \ref{fig:3dcomrise2_t15}. 
\begin{figure}
\center
\hspace*{-2.1cm}
\includegraphics[angle=-90,width=0.5\textwidth]{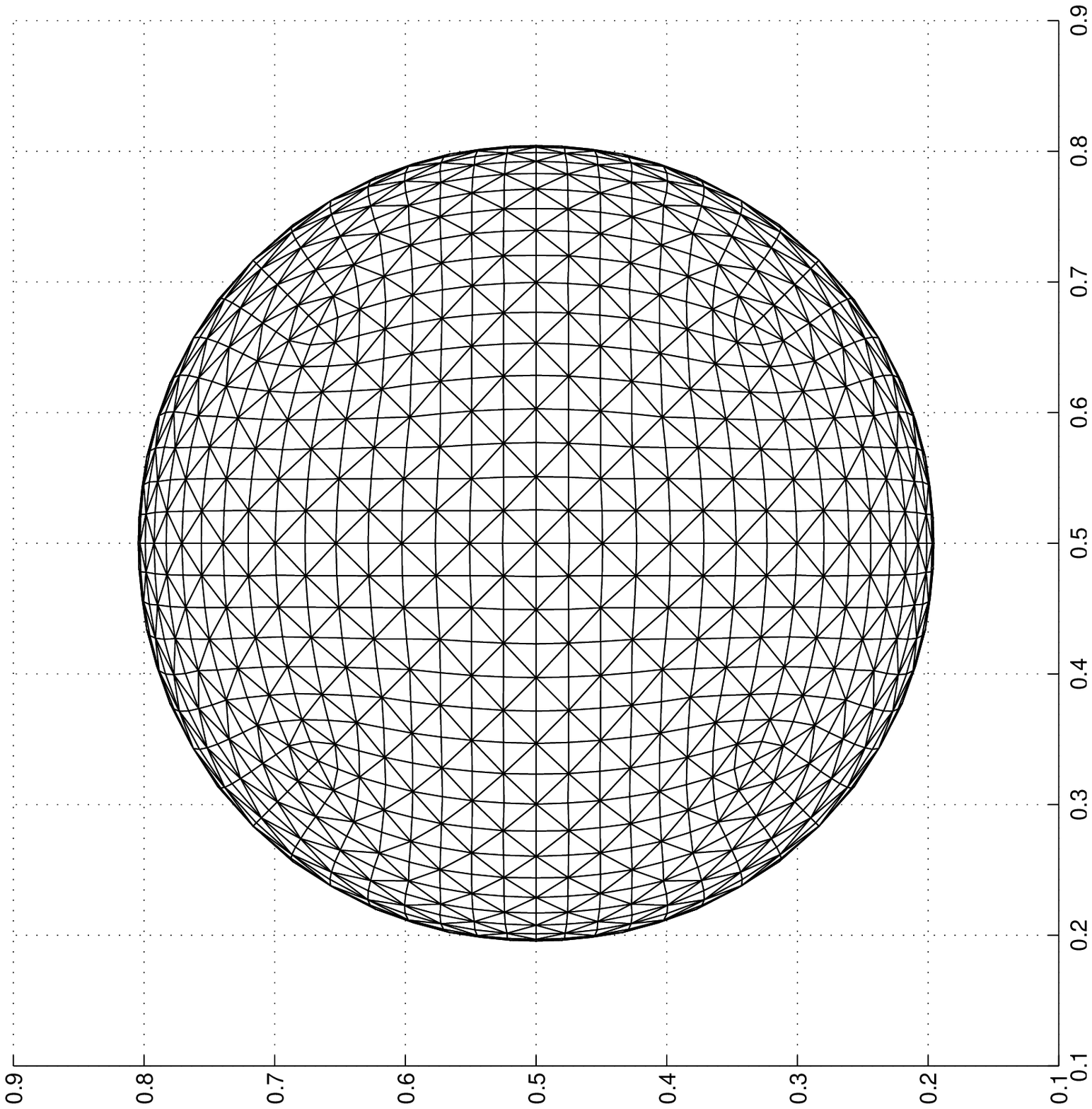}
\includegraphics[angle=-90,width=0.5\textwidth]{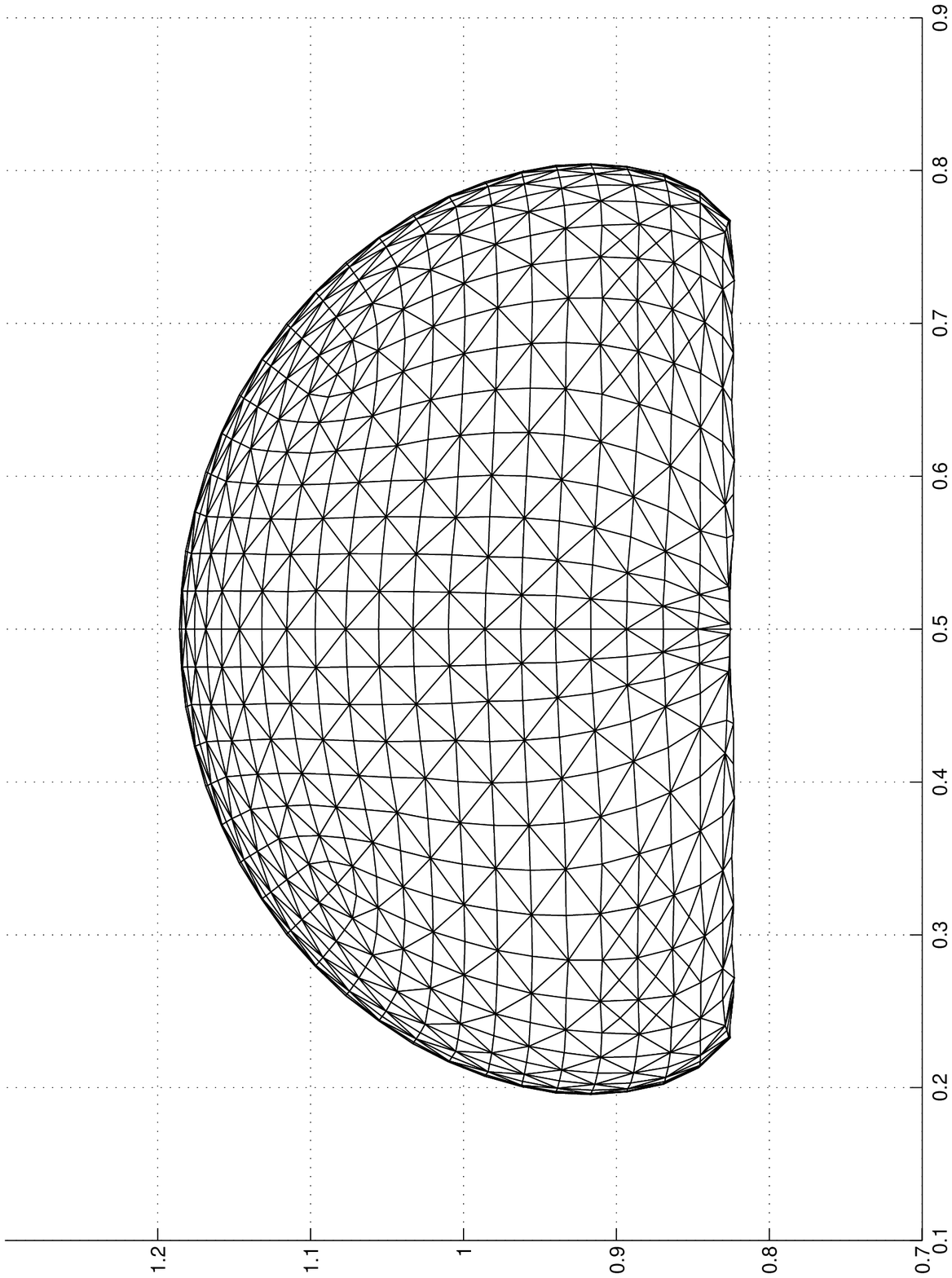}
\includegraphics[angle=-90,width=0.4\textwidth]{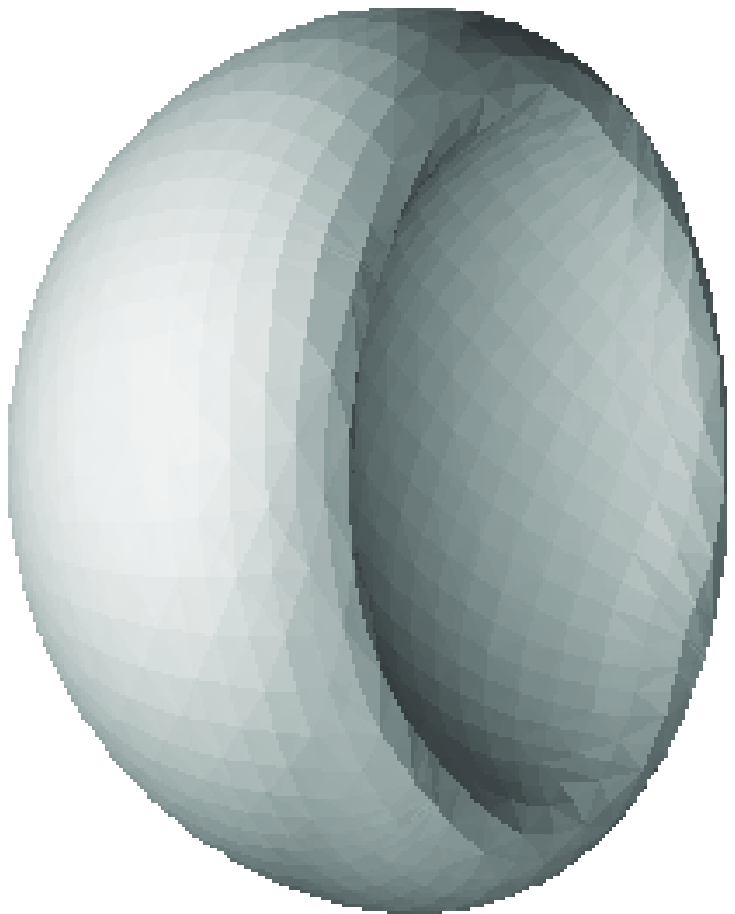}
\caption{(adapt$_{6,3}^{(1)}$)
The final bubble for the 3d benchmark problem 2 at time $T=1.5$. 
Views from the top (left) and from the front (right). Below an additional view
of the final bubble.}
\label{fig:3dbubble2_t15}
\end{figure}%
\begin{figure}
\center
\includegraphics[angle=-90,width=0.45\textwidth]{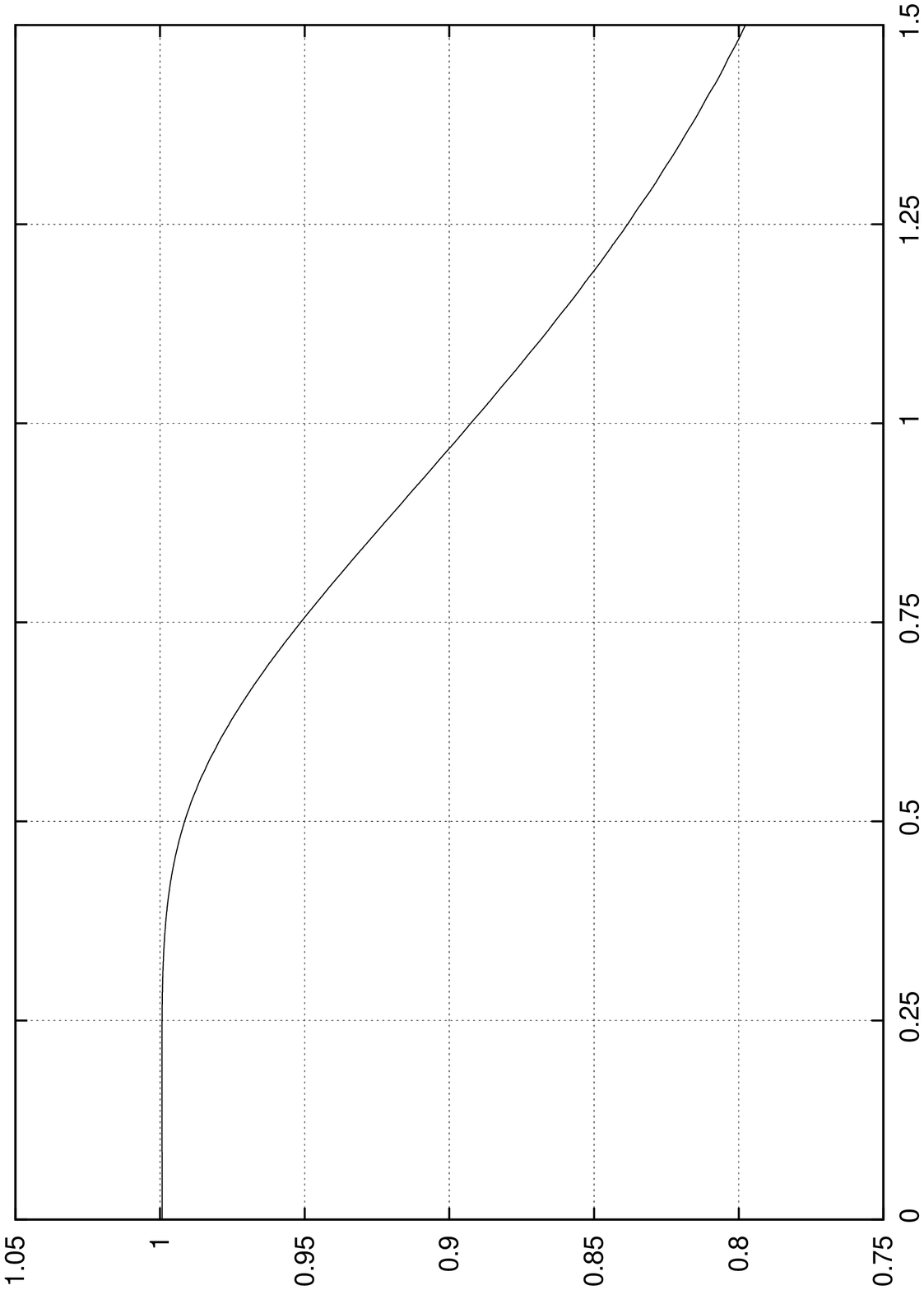}
\caption{(adapt$_{6,3}^{(1)}$)
Sphericity for the 3d benchmark problem 2.}
\label{fig:sphericity2_t15}
\end{figure}%
\begin{figure}
\center
\includegraphics[angle=-90,width=0.45\textwidth]{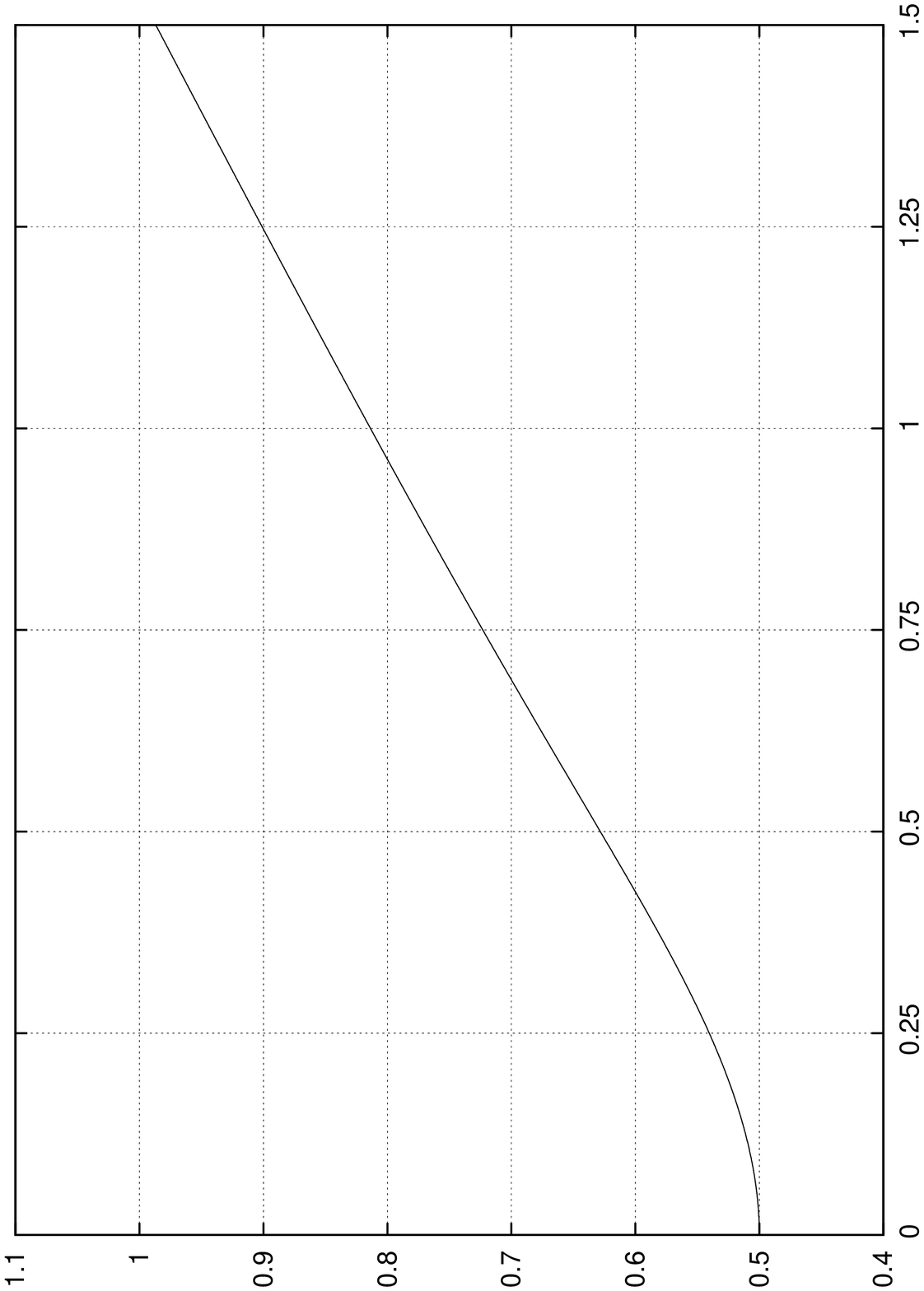}
\includegraphics[angle=-90,width=0.45\textwidth]{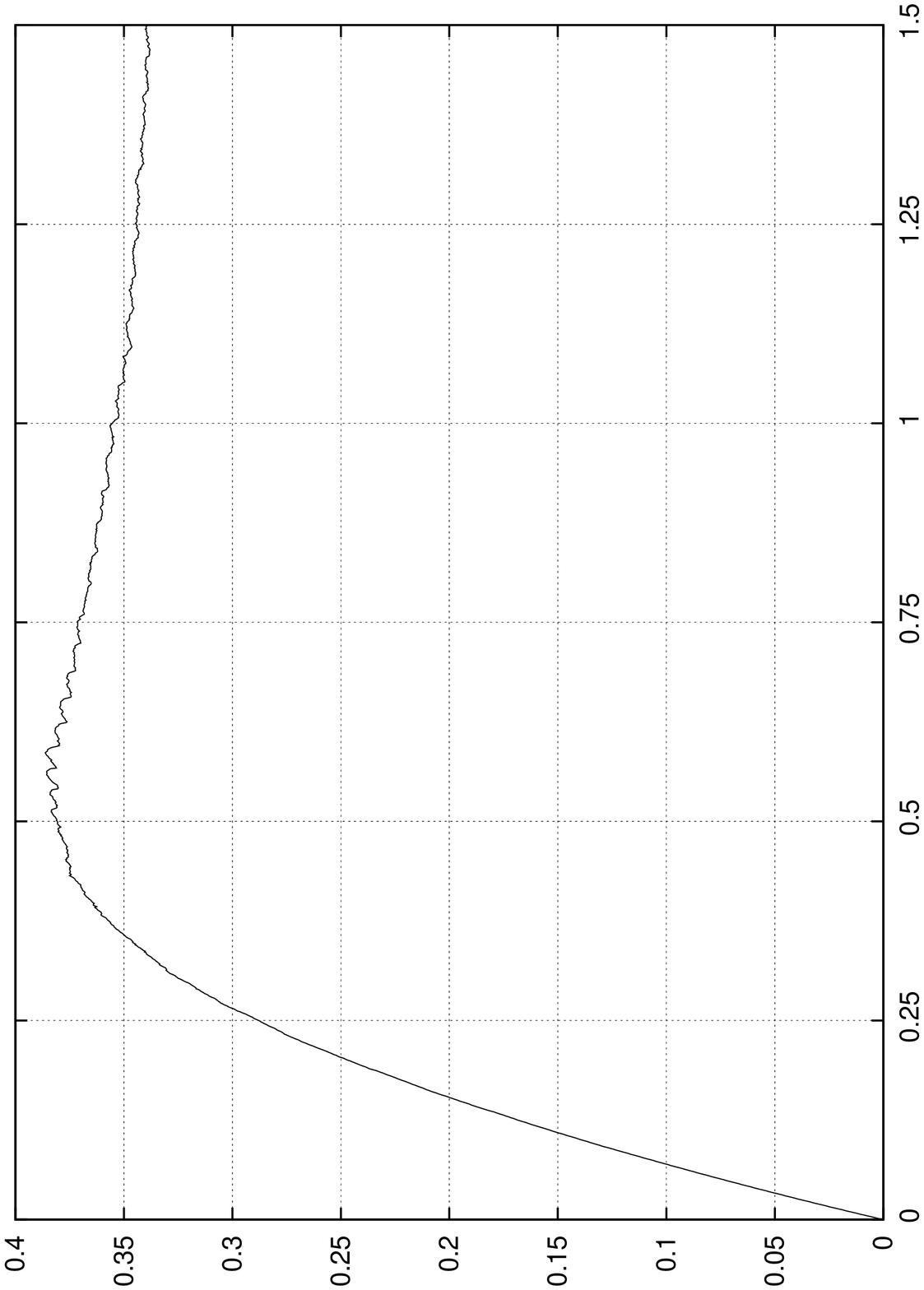}
\caption{(adapt$_{6,3}^{(1)}$)
Centre of mass and rise velocity for the 3d benchmark problem 2.}
\label{fig:3dcomrise2_t15}
\end{figure}%

\subsubsection{Rising butanol droplets in 3d} \label{sec:623}
Here we consider a problem that is inspired from \S1.3.1 in \cite{GrossR11}, 
where we apply a convenient rescaling in space. 
In particular,
we let $\Omega = (0, 1.2) \times (0, 1.2) \times (0, 3)$ with
$\partial_1\Omega = [0,1.2] \times [0,1.2] \times \{0,3\}$ and 
$\partial_2\Omega = \partial\Omega \setminus \partial_1\Omega$.
Moreover, we set
$\Gamma_0 = \{ \vec z \in \R^3: |\vec z - (0.6, 0.6, 0.3)| = R_0\}$
with $R_0 = 0.1$.
Finally,
\begin{align} \label{eq:Reusken1}
\rho_+ & = 9.865 \times 10^{-4},\quad \rho_- = 8.454 \times 10^{-4}\,,\quad 
\mu_+ = 1.388 \times 10^{-5}\,,\quad \mu_- = 3.281\times 10^{-5}\,,\quad
\nonumber \\
\gamma & = 1.63\times 10^{-3}\,,
\end{align}
with $\vec f_1 = -(0, 0, 981)^T$ and $\vec f_2 = \vec 0$. These parameters
model the evolution of a rising butanol droplet in a tank filled with water.
In \cite[\S1.3.1]{GrossR11} a terminal rise velocity of $V^M_c = 5.3$ 
(rescaled to our applied transformation in space) 
is reported at time $T=0.5$, while the final position of the bubble is at about 
$z_c^M = 2.7$. 
The discretization parameters for our approximation (\ref{eq:HGa}--d) are shown
in Table~\ref{tab:3dCPUReusken1}. For our finest run
we obtain $V^M_c = 5.33$ and $z_c^M = 2.69$. 
See Figure~\ref{fig:3dbubble_reusken1} for plots of $\Gamma^M$, and 
Figure~\ref{fig:reusken1} for plots of sphericity, centre of mass and rise
velocity over time.
\begin{table}
\center
\begin{tabular}{l|r|r|r|r|r}
\hline
& $K^0_\Gamma$ & $J^0_\Omega$ & NDOF$_{\rm bulk}$ & $M$ & CPU \XFEMGAMMA\\
\hline 
2\,adapt$_{5,2}^{(1)}$  &  770 &  4608 &  21056 & 1000 & 7860 \\
2\,adapt$_{6,3}^{(1)}$  & 1538 & 19536 &  87530 & 1000 & 30830 \\
2\,adapt$_{7,3}^{(1)}$  & 3074 & 47712 & 206652 & 1000 & 113300 \\ 
\hline
\end{tabular}
\caption{Simulation statistics and timings for the rising butanol droplet with
$R_0=0.1$.}
\label{tab:3dCPUReusken1}
\end{table}%
\begin{figure}
\center
\hspace*{-2.1cm}
\includegraphics[angle=-90,width=0.5\textwidth]{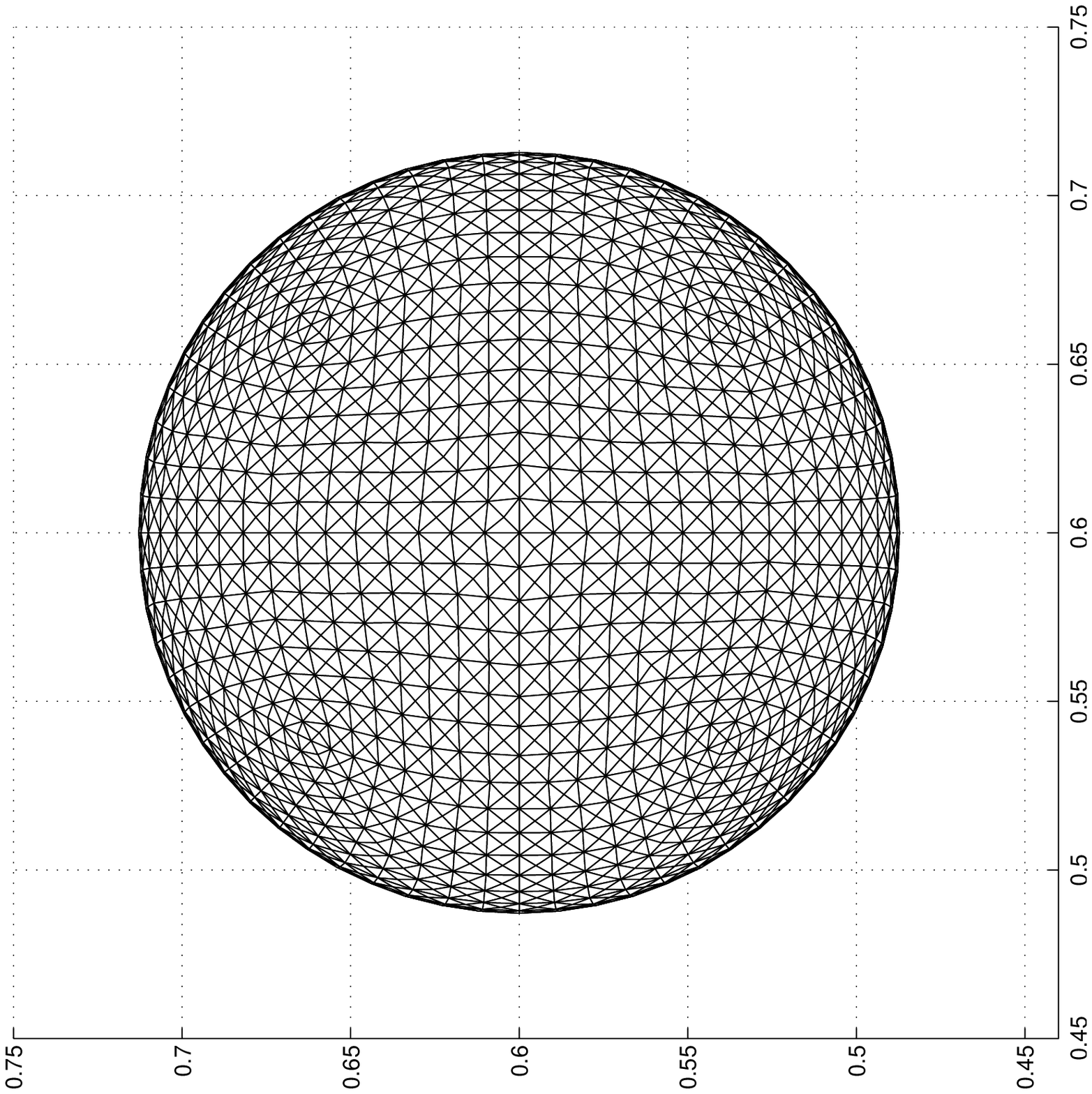}
\includegraphics[angle=-90,width=0.5\textwidth]{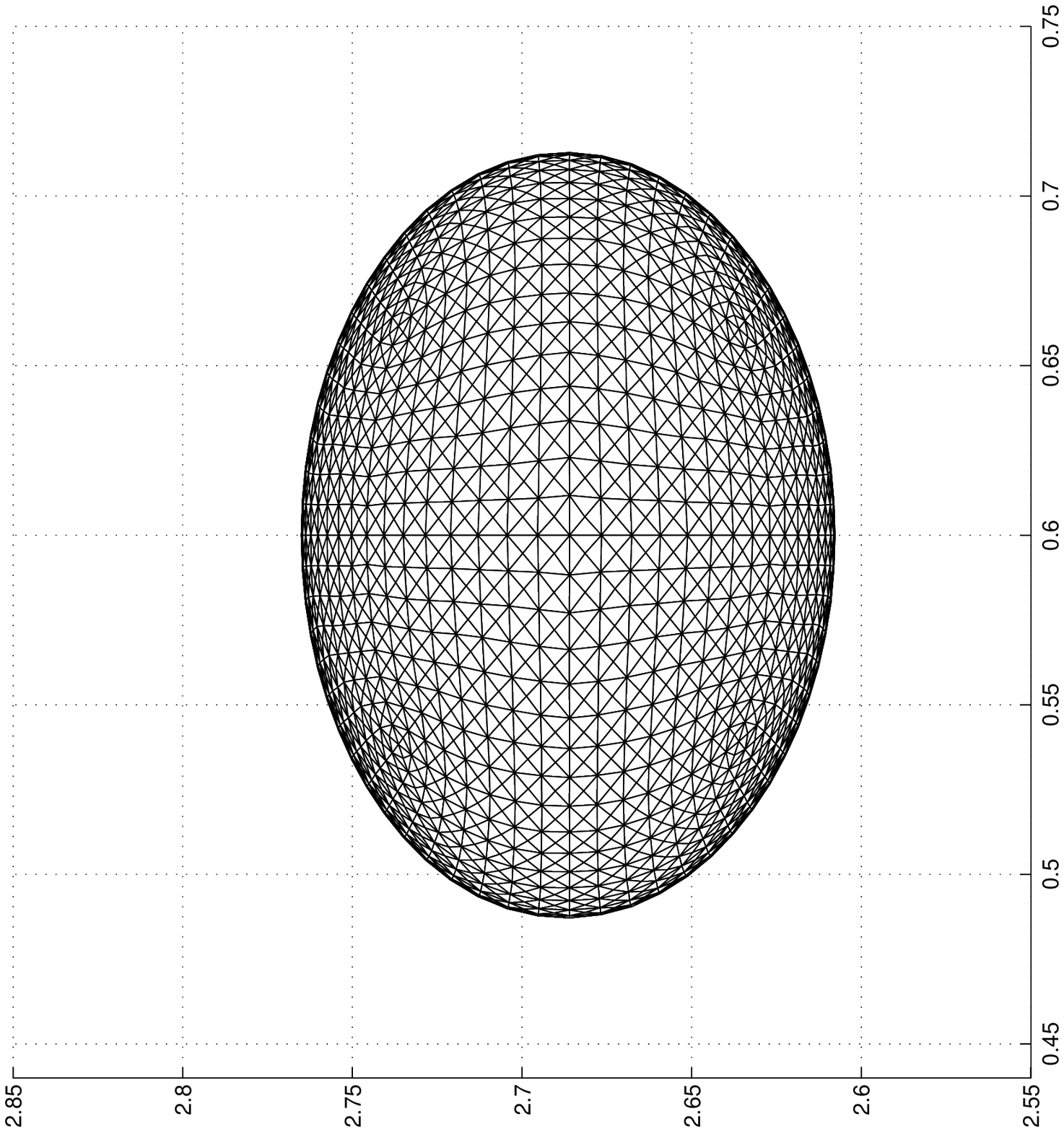}
\caption{(2\,adapt$_{7,3}^{(1)}$, $R_0 = 0.1$)
The final butanol droplet at time $T=0.5$.
Views from the top (left) and from the front (right).}
\label{fig:3dbubble_reusken1}
\end{figure}%
\begin{figure}
\center
\includegraphics[angle=-90,width=0.4\textwidth]{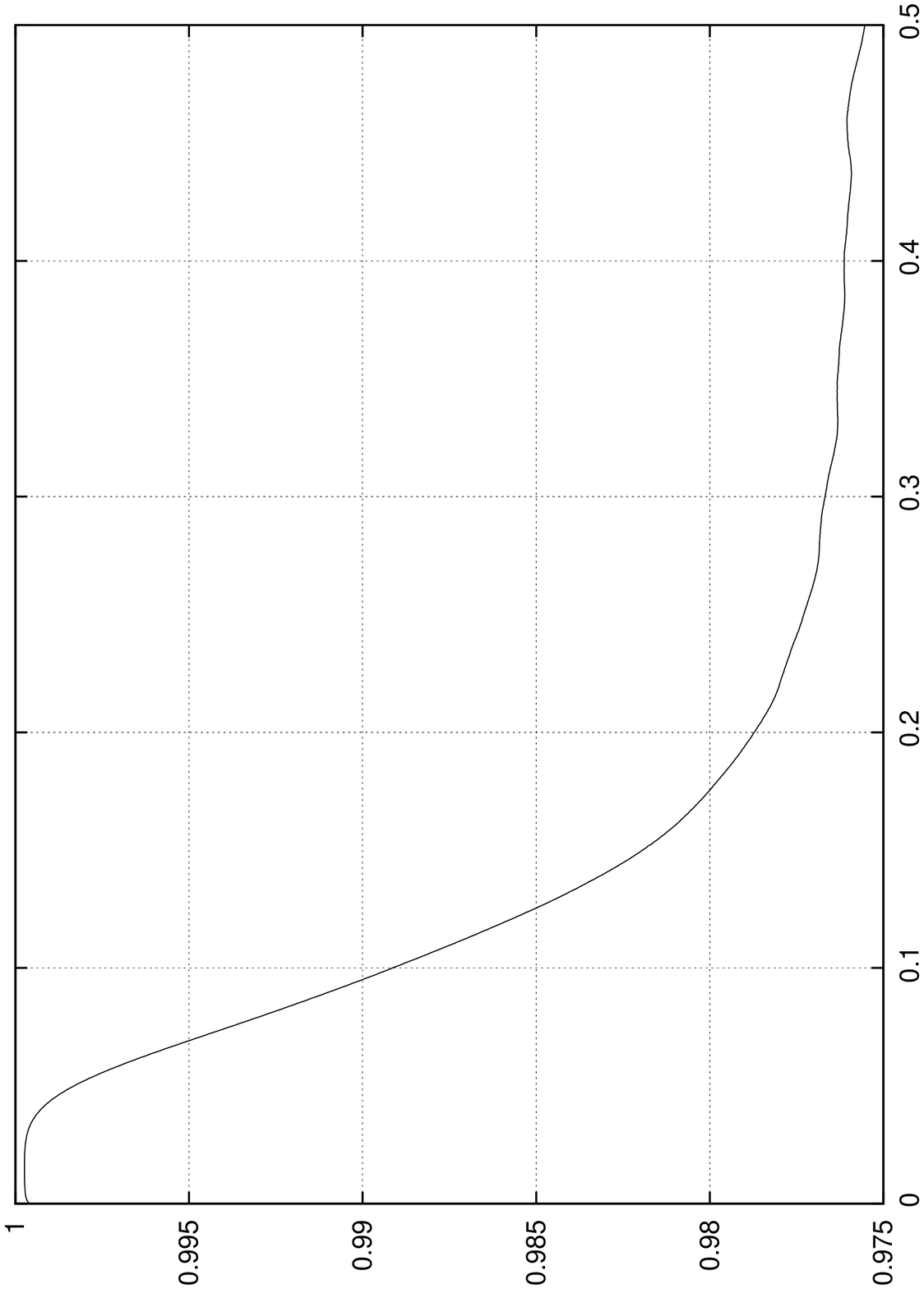}\\
\includegraphics[angle=-90,width=0.4\textwidth]{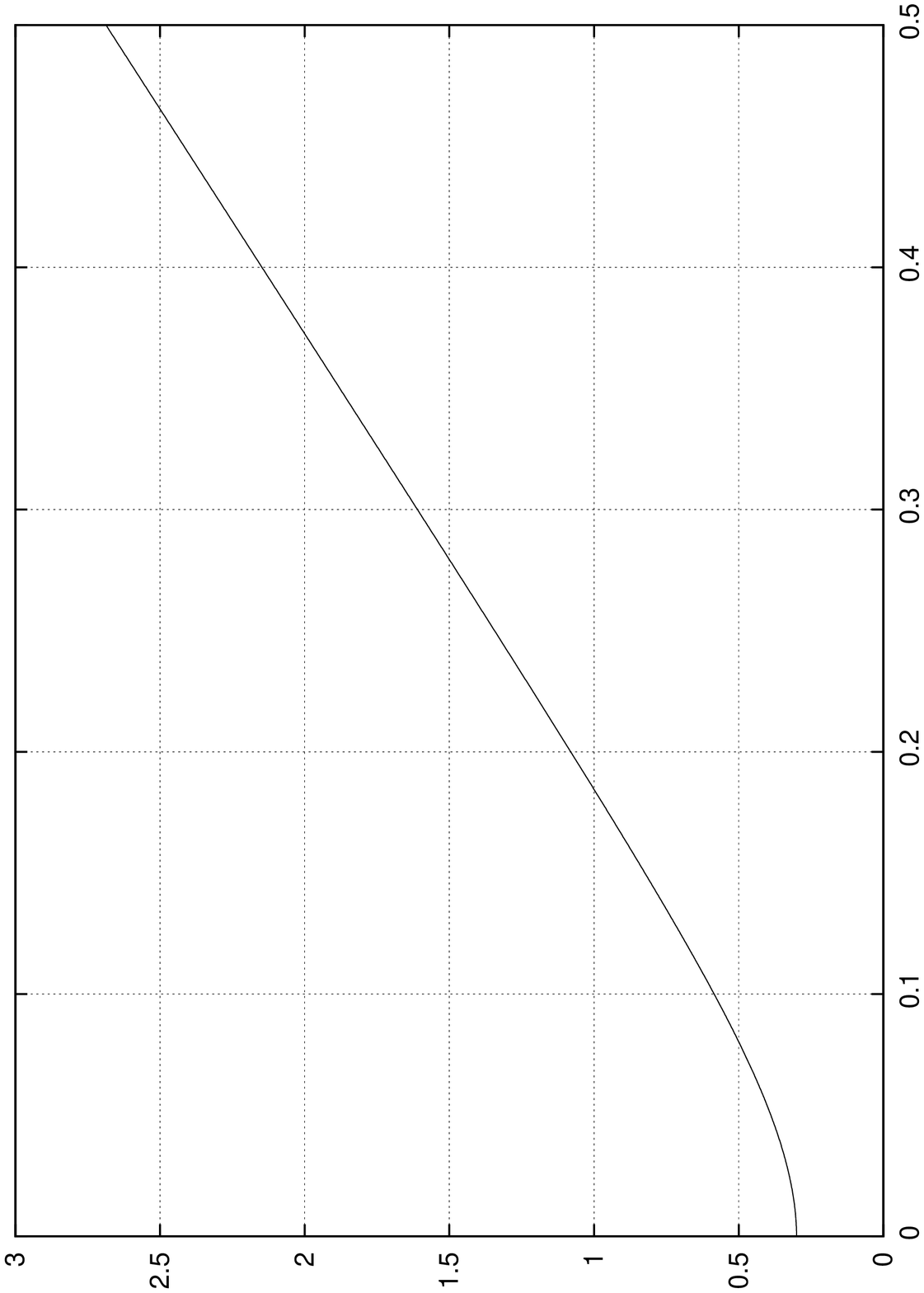}\qquad
\includegraphics[angle=-90,width=0.4\textwidth]{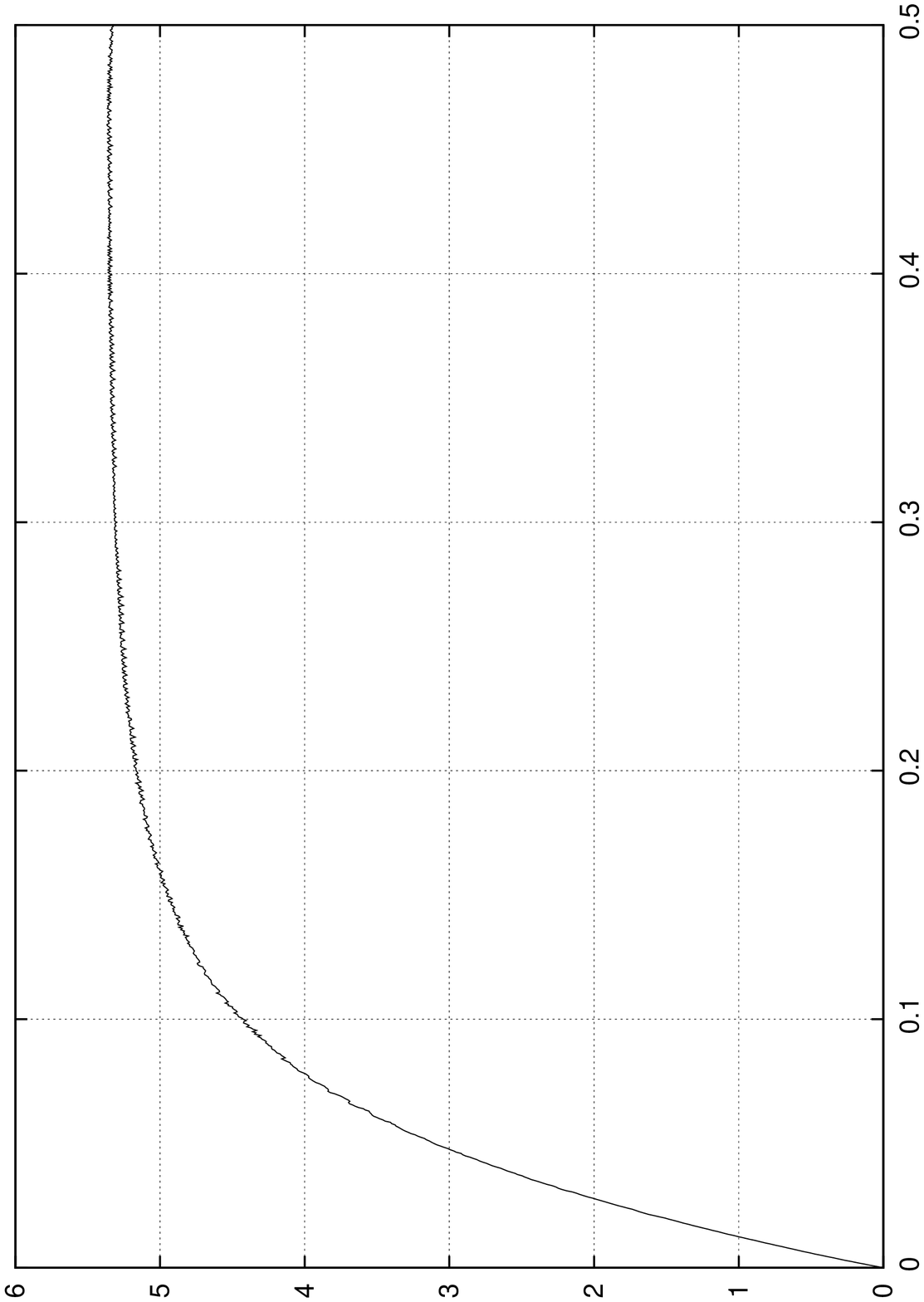}
\caption{(2\,adapt$_{7,3}^{(1)}$, $R_0 = 0.1$)
Sphericity (top), as well as 
centre of mass and rise velocity (bottom) for the rising butanol droplet.}
\label{fig:reusken1}
\end{figure}%

We recall from \cite[Fig. 1.13]{GrossR11} that the shape of the rising butanol
droplet changes dramatically if the initial droplet is chosen larger. To
illustrate this, we repeat the previous simulation but now choose the radius of
the initial sphere to be $R_0=0.2$, 
i.e.\ the droplet is twice as large
as before. As the larger droplet is rising faster, we stop the simulation at
time $T=0.4$.
The discretization parameters for our approximation (\ref{eq:HGa}--d) are shown
in Table~\ref{tab:3dCPUReusken12}, and we visualize the simulation with the
finest parameters in Figures~\ref{fig:3dbubble_reusken12} and
\ref{fig:reusken12}.
\begin{table}
\center
\begin{tabular}{l|r|r|r|r|r}
\hline
& $K^0_\Gamma$ & $J^0_\Omega$ & NDOF$_{\rm bulk}$ & $M$ & CPU \XFEMGAMMA\\
\hline 
2\,adapt$_{5,2}^{(1)}$  &  770 &  4608 &  21056 & 800 &  11820 \\
2\,adapt$_{6,3}^{(1)}$  & 1538 & 19536 &  87530 & 800 &  68880 \\
2\,adapt$_{7,3}^{(1)}$  & 3074 & 47712 & 206652 & 800 & 450800 \\ 
\hline
\end{tabular}
\caption{Simulation statistics and timings for the rising butanol droplet with
$R_0=0.2$.}
\label{tab:3dCPUReusken12}
\end{table}%
\begin{figure}
\center
\hspace*{-2.1cm}
\includegraphics[angle=-90,width=0.5\textwidth]{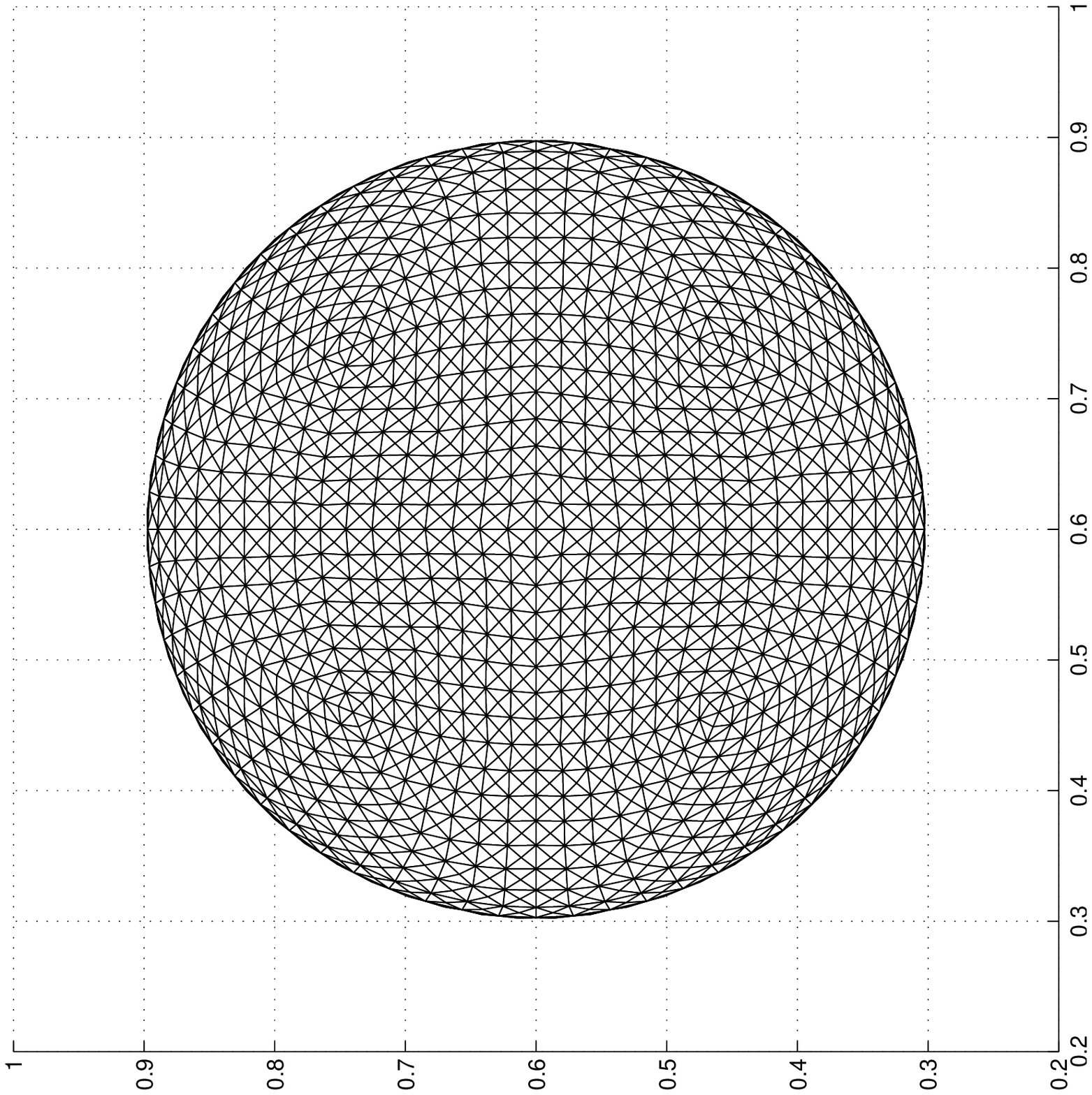}
\includegraphics[angle=-90,width=0.5\textwidth]{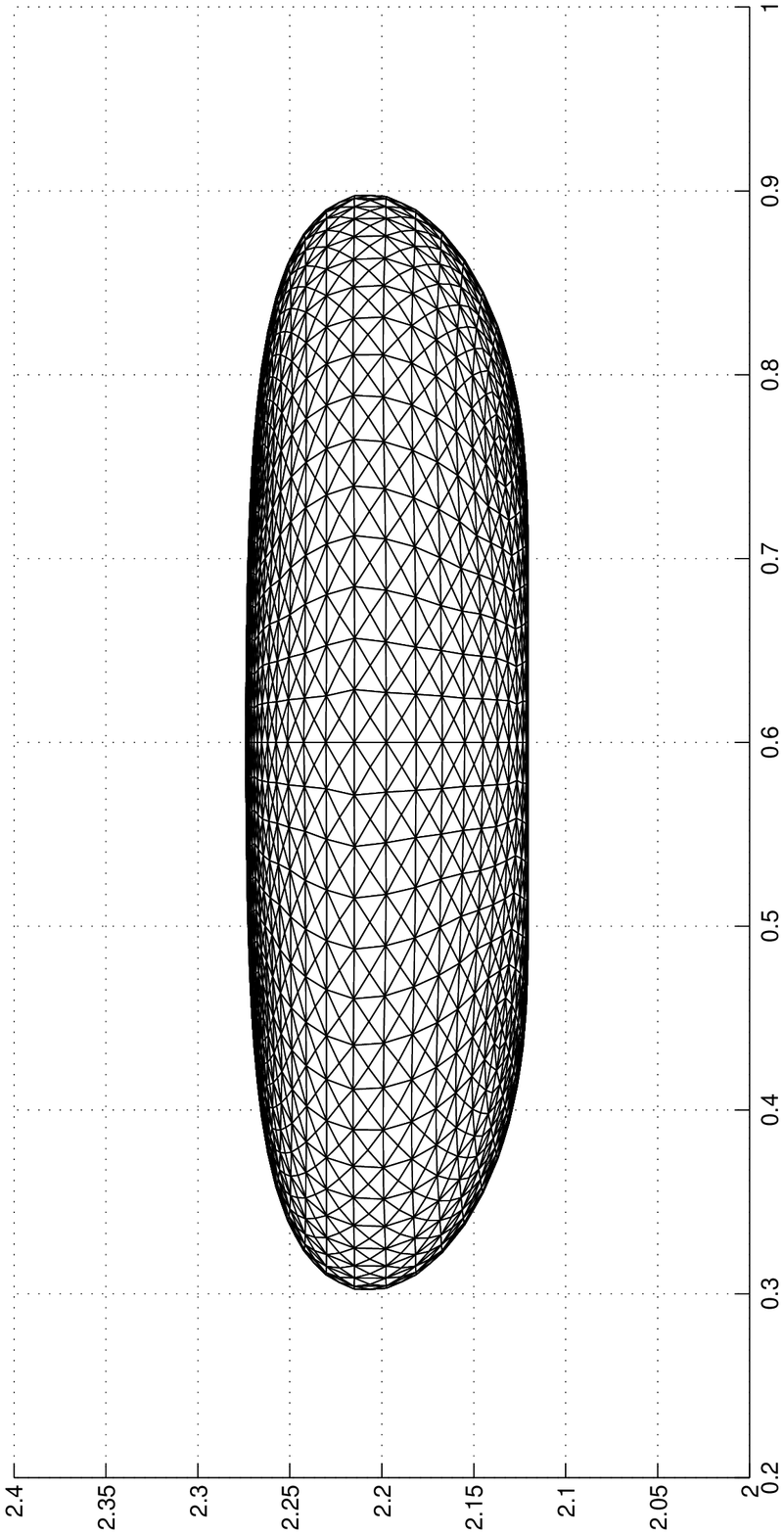}
\caption{(2\,adapt$_{6,3}^{(1)}$, $R_0 = 0.2$)
The final butanol droplet at time $T=0.4$.
Views from the top (left) and from the front (right).}
\label{fig:3dbubble_reusken12}
\end{figure}%
\begin{figure}
\center
\includegraphics[angle=-90,width=0.4\textwidth]{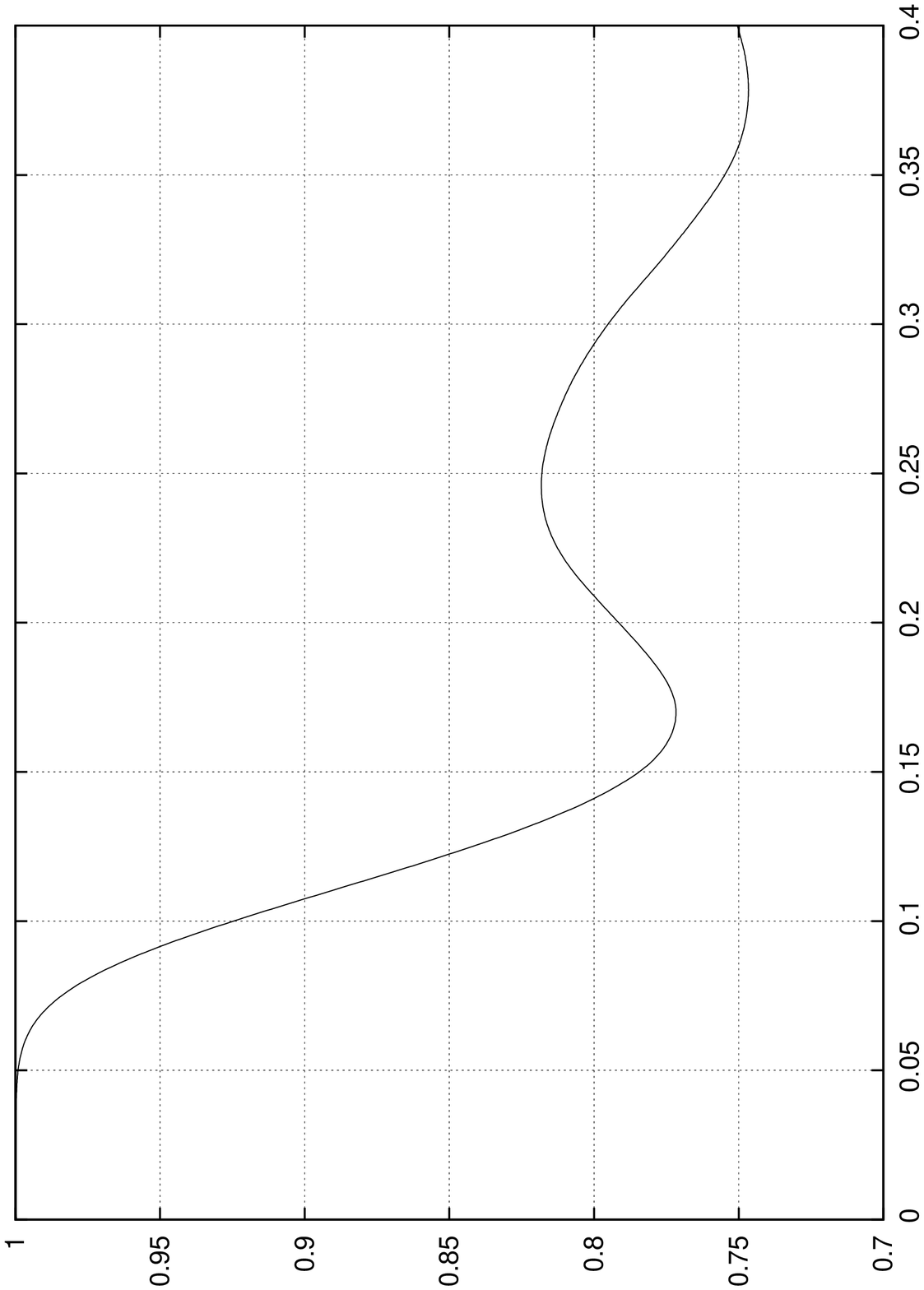}\\
\includegraphics[angle=-90,width=0.4\textwidth]{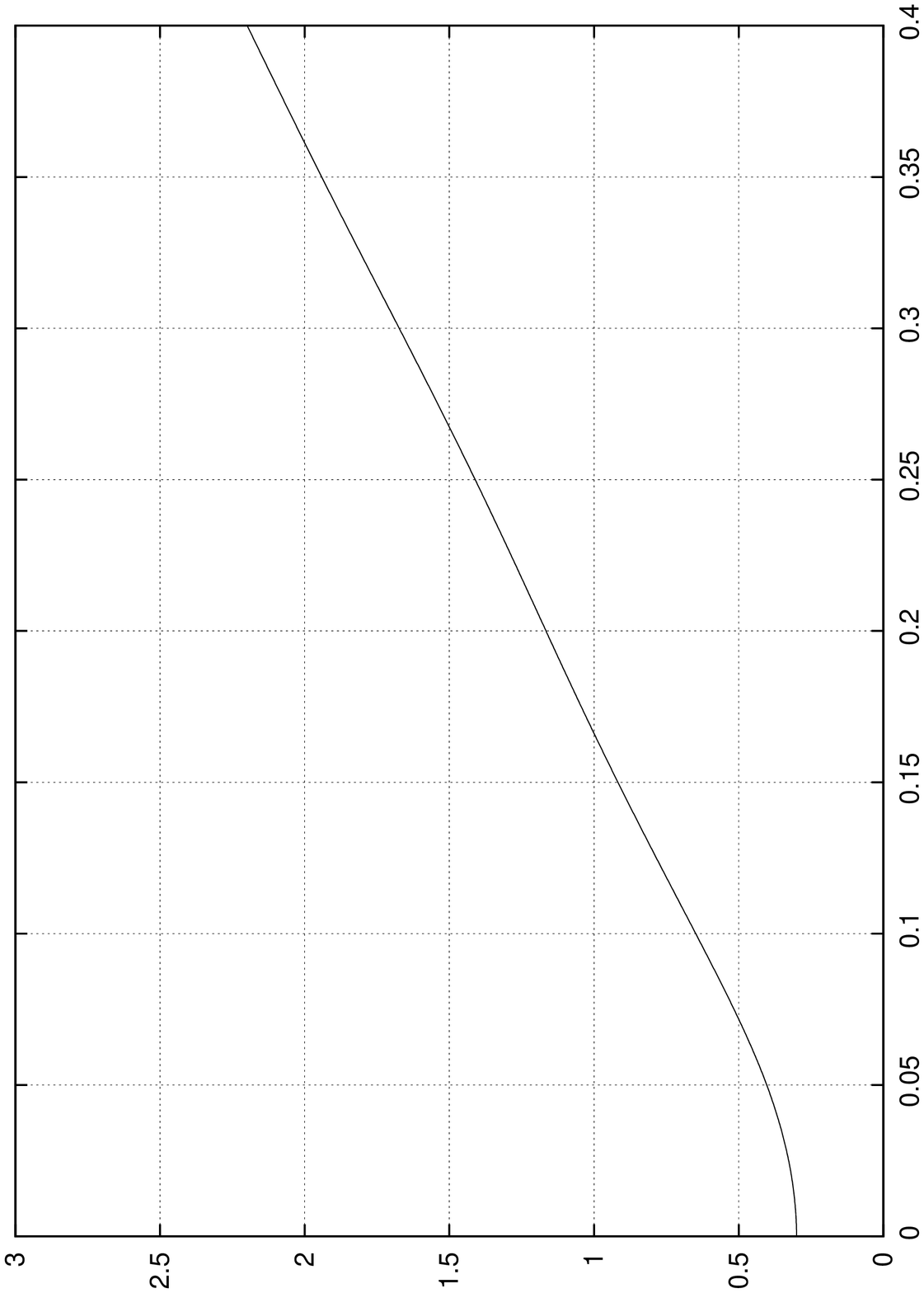}\qquad
\includegraphics[angle=-90,width=0.4\textwidth]{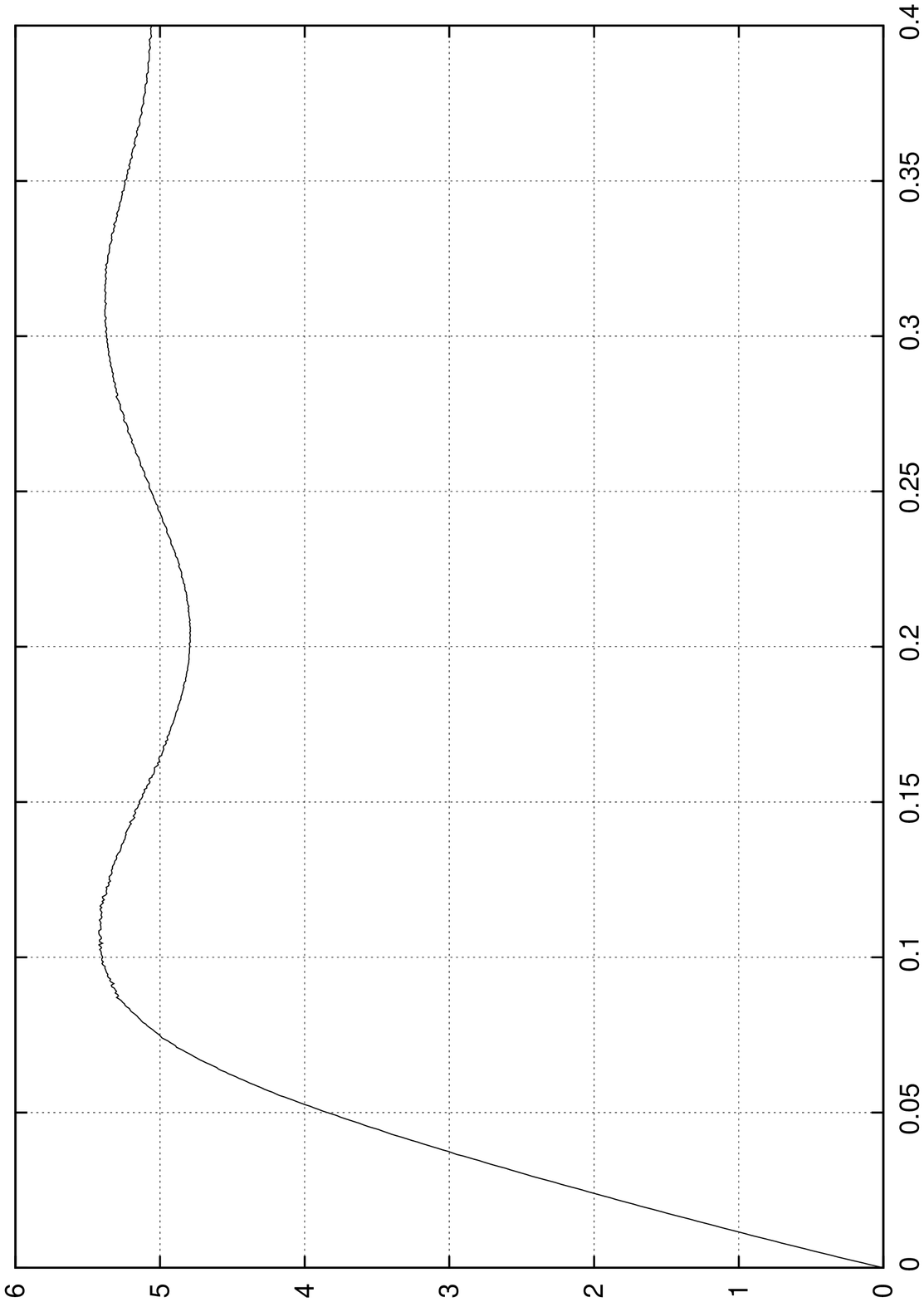}
\caption{(2\,adapt$_{6,3}^{(1)}$, $R_0 = 0.2$)
Sphericity (top), as well as 
centre of mass and rise velocity (bottom) for the rising butanol droplet.}
\label{fig:reusken12}
\end{figure}%

\subsubsection{Rising toluene droplet in 3d} \label{sec:624}
Here we consider a problem that is inspired from \S7.11.2 in \cite{GrossR11}. 
In particular, 
we use all the parameters from the first simulation in \S\ref{sec:623}, 
with the exceptions of
\begin{align*} 
\rho_+ & = 9.988 \times 10^{-4},\quad \rho_- = 8.675 \times 10^{-4}\,,\quad 
\mu_+ = 1.029 \times 10^{-5}\,,\quad \mu_- = 5.96\times 10^{-6}\,,\quad \\
\gamma & = 3.431\times 10^{-2}\,.
\end{align*}
These parameters
model the evolution of a rising toluene droplet in a tank filled with water.
Here the properties of the outer phase (water) slightly differ from the ones in
(\ref{eq:Reusken1}), which models the fact that some saturation with
toluene has taken place to avoid any mass transfer between the two phases.
The main difference to the rising butanol droplet in \S\ref{sec:623} is the
higher surface tension $\gamma$, which the authors in
\cite{GrossR11} state makes this simulation computationally much more
challenging.
In \cite[\S7.11.2]{GrossR11} ten time steps with $\tau = 5\times10^{-4}$ are
performed for this experiment. We continue this simulation until 
time $T=0.4$, 
and observe a terminal rise velocity of $V^M_c = 7.5$, as well
as $z_c^M = 2.78$ for our finest simulation; 
see Table~\ref{tab:3dCPUReusken2} for the precise discretization parameters.
The evolution of the sphericity, the centre of mass as well as the rise
velocity can be seen in Figure~\ref{fig:reusken2}, 
where we note that the droplet
stays almost perfectly spherical throughout the evolution.
\begin{table}
\center
\begin{tabular}{l|r|r|r|r|r}
\hline
& $K^0_\Gamma$ & $J^0_\Omega$ & NDOF$_{\rm bulk}$ & $M$ & CPU \XFEMGAMMA\\
\hline 
2\,adapt$_{5,2}^{(1)}$ &  770 &  4608 &  21056 & 800 & 14760 \\
2\,adapt$_{6,3}^{(1)}$ & 1538 & 19536 &  87530 & 800 & 68650 \\
2\,adapt$_{7,3}^{(1)}$ & 3074 & 47712 & 206652 & 800 & 340600 \\
\hline
\end{tabular}
\caption{Simulation statistics and timings for rising toluene droplet with
$R_0=0.1$.}
\label{tab:3dCPUReusken2}
\end{table}%
\begin{figure}
\center
\includegraphics[angle=-90,width=0.4\textwidth]{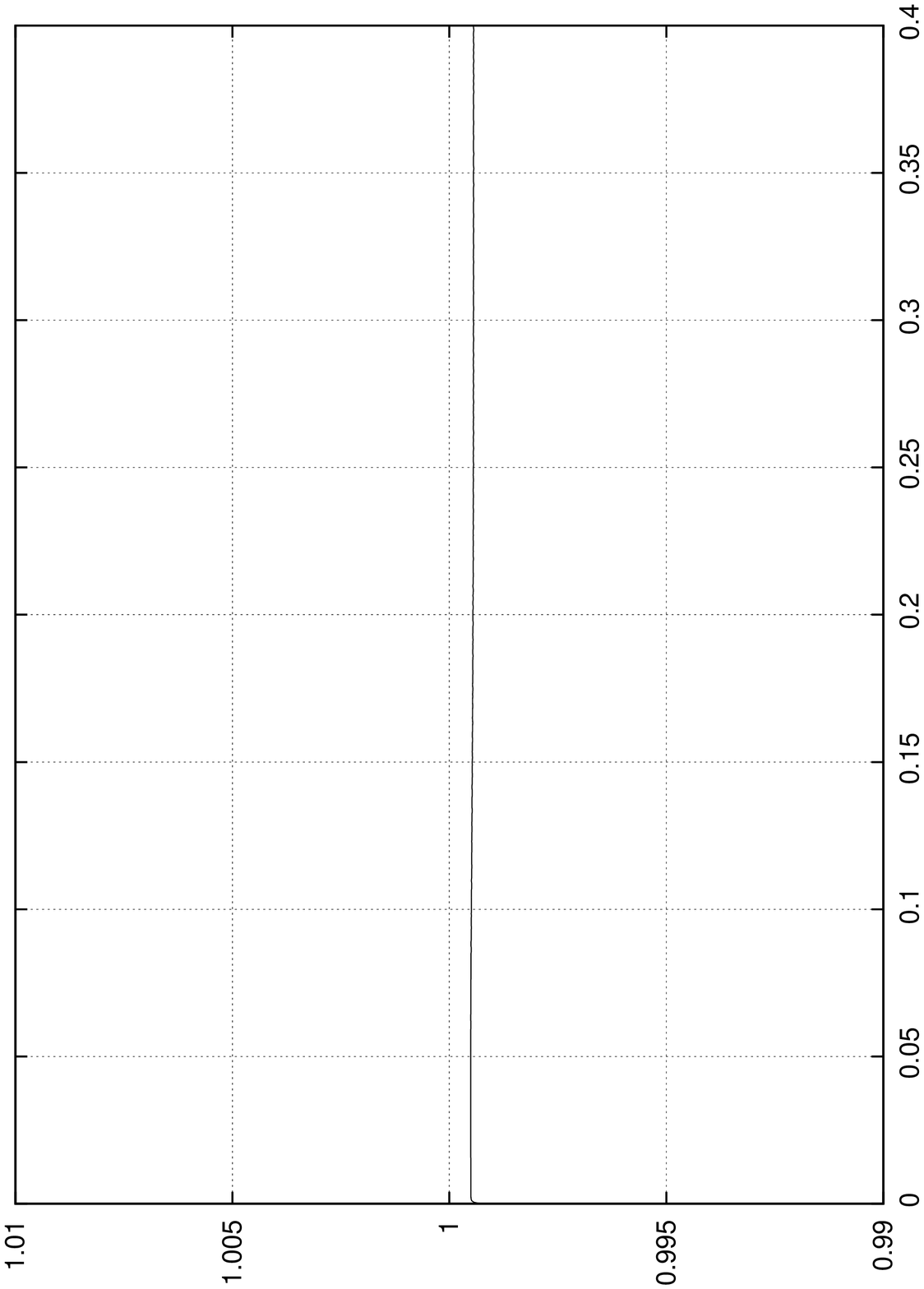}\\
\includegraphics[angle=-90,width=0.4\textwidth]{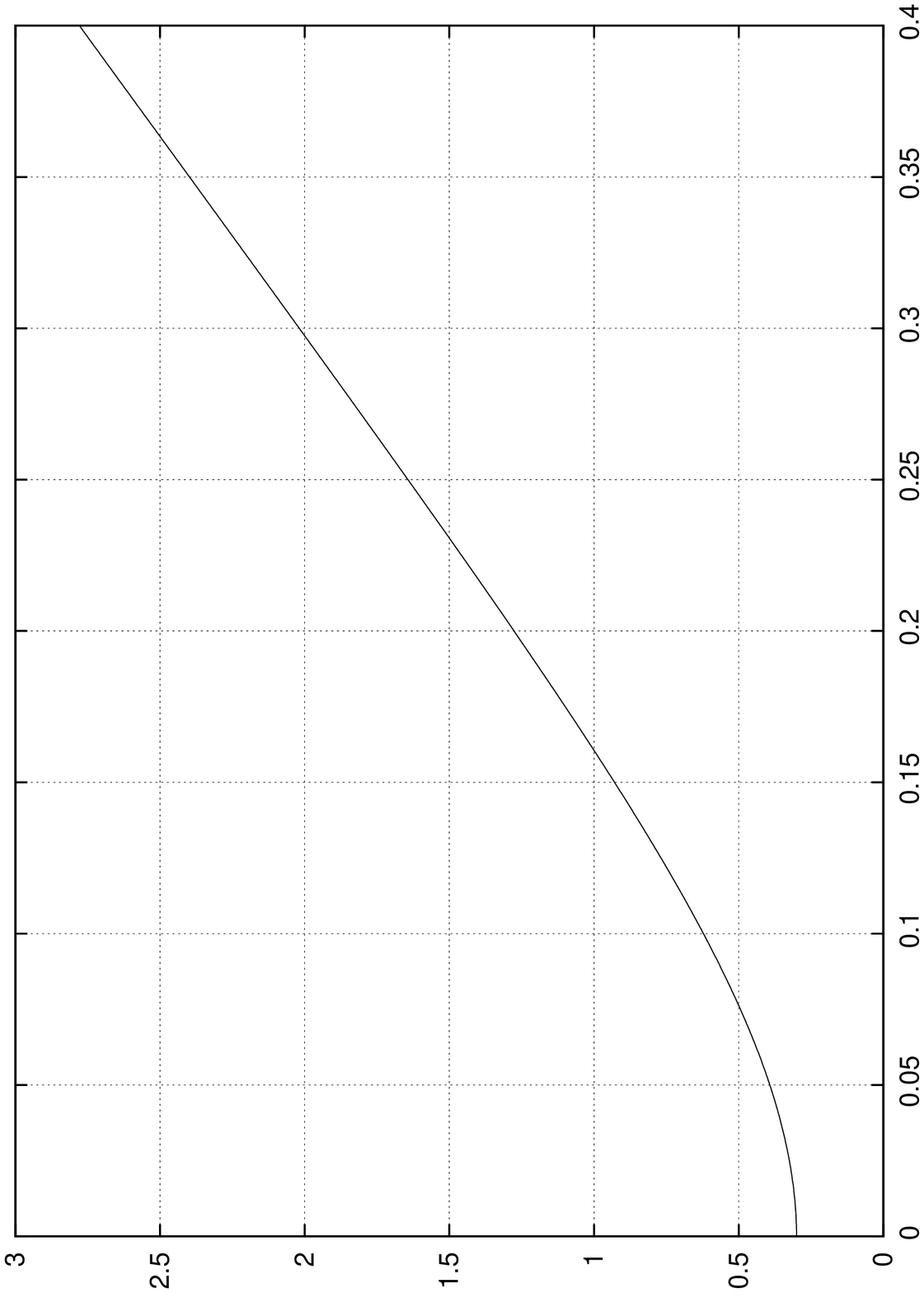}\qquad
\includegraphics[angle=-90,width=0.4\textwidth]{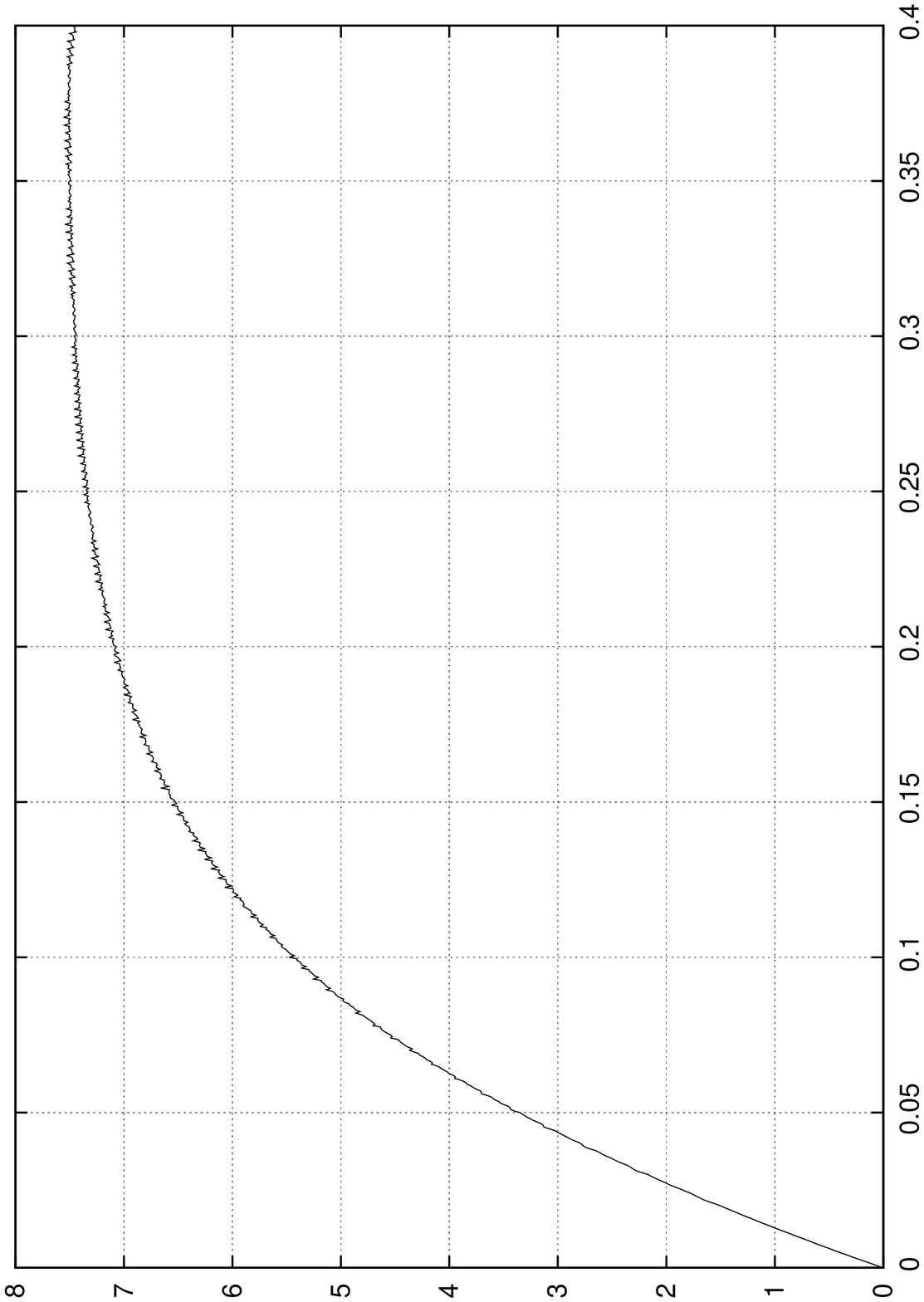}
\caption{(2\,adapt$_{7,3}^{(1)}$, $R_0 = 0.1$)
Sphericity (top), as well as 
centre of mass and rise velocity (bottom) for the rising toluene droplet.}
\label{fig:reusken2}
\end{figure}%

\def\soft#1{\leavevmode\setbox0=\hbox{h}\dimen7=\ht0\advance \dimen7
  by-1ex\relax\if t#1\relax\rlap{\raise.6\dimen7
  \hbox{\kern.3ex\char'47}}#1\relax\else\if T#1\relax
  \rlap{\raise.5\dimen7\hbox{\kern1.3ex\char'47}}#1\relax \else\if
  d#1\relax\rlap{\raise.5\dimen7\hbox{\kern.9ex \char'47}}#1\relax\else\if
  D#1\relax\rlap{\raise.5\dimen7 \hbox{\kern1.4ex\char'47}}#1\relax\else\if
  l#1\relax \rlap{\raise.5\dimen7\hbox{\kern.4ex\char'47}}#1\relax \else\if
  L#1\relax\rlap{\raise.5\dimen7\hbox{\kern.7ex
  \char'47}}#1\relax\else\message{accent \string\soft \space #1 not
  defined!}#1\relax\fi\fi\fi\fi\fi\fi}

\end{document}